\newtheorem{theorem}{Theorem}[section]
\newtheorem{lemma}[theorem]{Lemma}
\newtheorem{corollary}[theorem]{Corollary}
\numberwithin{equation}{section}
\theoremstyle{definition}
\newtheorem{definition}[theorem]{Definition}
\theoremstyle{remark}
\newcommand{\bD}{\mathbb{D}}
\newcommand{\bC}{\mathbb{C}}
\newcommand{\bR}{\mathbb{R}}
\newcommand{\bN}{\mathbb{N}}
\newcommand{\bZ}{\mathbb{Z}}
\newcommand{\cF}{\mathcal{F}}
\newcommand{\cJ}{\mathcal{J}}
\newcommand{\cQ}{\mathcal{Q}}
\newcommand{\ra}{\rightarrow}
\newcommand{\pa}{\partial}
\DeclareMathOperator{\diam}{diam}
\DeclareMathOperator{\Hdim}{Hdim}
\DeclareMathOperator{\Pdim}{Pdim}
\DeclareMathOperator{\UMdim}{\overline{Mdim}}
\DeclareMathOperator{\dist}{dist}
\begin{document}


\title{Transcendental Julia Sets with Fractional Packing Dimension}


\author{Jack Burkart}
\address{Department of Mathematics, Stony Brook University, Stony Brook, NY 11794}
\email{jack.burkart@stonybrook.edu}





\begin{abstract}
We construct a family of transcendental entire functions whose Julia sets have packing dimension in $(1,2)$. These are the first examples where the computed packing dimension is not $1$ or $2$. Our construction will allow us further show that the set of packing dimensions attained is dense in the interval $(1,2)$, and that the Hausdorff dimension of the Julia sets can be made arbitrarily close to the corresponding packing dimension.
\end{abstract}


 \maketitle
\newpage



\section{Introduction}
Let $f: \bC \ra \bC$ be a transcendental (non-polynomial) entire function. We denote the \textit{$n$th iterate} of $f$ by $f^n$. We define the \textit{Fatou set}, $\cF(f)$, to be the set of all points so that $\{f^n\}_{n=1}^{\infty}$ locally forms a normal family. Thus the Fatou set is the ``stable'' set for the dynamics of $f$. We define the \textit{Julia set}, $\cJ(f)$, to be the complement of the Fatou set. This is the set where the dynamics of $f$ are chaotic. A primary aim of complex dynamics is to study the geometric and topological properties of the Julia set. We refer the reader to \cite{CG} and \cite{DS} and  for an introduction to complex dynamics the rational and transcendental setting, respectively.

In this paper we prove the following theorem.
\begin{theorem}
\label{Main1}
There exists a transcendental entire function $f: \bC \ra \bC$ such that the packing dimension of $\cJ(f) \in (1,2)$.
\end{theorem}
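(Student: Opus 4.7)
The plan is to construct $f$ via Bishop's quasiconformal folding, which produces transcendental entire functions with prescribed tract geometry from a carefully designed infinite planar tree $T$. The idea is that by tuning the combinatorics and metric data of $T$, one can force the tracts of $f$ to be thin at a controlled, scale-invariant rate; the resulting Julia set will then have packing dimension strictly between $1$ and $2$, and the exponent will depend explicitly on the parameters of $T$.

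First I would construct an infinite tree $T \subset \bC$ of bounded vertex valence with a self-similar branching pattern and edge lengths $\ell_n$ that decay geometrically along each branch. Applying Bishop's folding theorem yields an entire function $f$ in the Eremenko--Lyubich class $\cB$ whose tracts $\{|f|>R\}$ are in bijection with the unbounded complementary components of a neighborhood of $T$, with widths quantitatively controlled by the $\ell_n$. Outside a small neighborhood of $T$, the map $f$ factors, up to bounded quasiconformal distortion, through a conformal map from $\bC \setminus T$ composed with $\cosh$.

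The Julia set will then contain a Cantor-bouquet-type skeleton of escaping rays associated to the tracts, and the rest of $\cJ(f)$ is obtained by pulling this skeleton back through the branches of $f^{-n}$. For the lower bound, I would identify an invariant subset of $\cJ(f)$ that is (bi-Lipschitz close to) a self-similar Cantor set whose similarity dimension, computable in terms of the $\ell_n$ and the branching pattern of $T$, strictly exceeds $1$. Because $\Pdim$ dominates $\Hdim$ of subsets, this yields $\Pdim \cJ(f) > 1$, and by varying the parameters of $T$ the attainable dimensions will form a dense subset of $(1,2)$.

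The main obstacle is the upper bound $\Pdim \cJ(f) < 2$. Unlike Hausdorff dimension, packing dimension equals the supremum over $x \in \cJ(f)$ of the upper box dimension of $\cJ(f)$ near $x$, so one must prove a \emph{uniform} covering estimate valid at every Julia point and every small scale, not merely along a sparse sequence. Here the thinness of the tracts is essential: using Koebe distortion together with the standard expansion for $f \in \cB$, I would show that for every $x \in \cJ(f)$ and every sufficiently small $r$, the intersection $\cJ(f) \cap B(x,r)$ is covered by at most $C r^{-d}$ balls of radius $r^{1+\varepsilon}$ for an exponent $d < 2$ determined by the decay rate of the $\ell_n$. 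Pushing this scale-by-scale estimate through the iteration requires careful combinatorial bookkeeping of how deep tracts nest under $f^{-n}$, and this is the technical heart of the argument. Once both bounds are in place, a separate refinement of the construction controls the \emph{total mass} of the Cantor bouquet and thereby pushes $\Hdim \cJ(f)$ arbitrarily close to $\Pdim \cJ(f)$, yielding the remaining assertions of the abstract.
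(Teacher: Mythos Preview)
Your approach has a fatal obstruction at the upper bound step. You propose to build $f$ via Bishop's quasiconformal folding so that $f$ lies in the Eremenko--Lyubich class $\mathcal{B}$, and then to argue that the thinness of the tracts forces $\Pdim(\cJ(f))<2$. But Rippon and Stallard proved (and the paper cites this as \cite{RS}) that \emph{every} $f\in\mathcal{B}$ satisfies $\Pdim(\cJ(f))=2$. So no matter how thin you make the tracts or how carefully you tune the tree $T$, the packing dimension of the Julia set will be exactly $2$; your covering argument for the upper bound must fail somewhere, and indeed the expansion estimates available in class $\mathcal{B}$ are simply not strong enough to beat the accumulation of hairs in the tracts at every scale uniformly.

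The paper's construction is fundamentally different precisely in order to escape class $\mathcal{B}$. It uses an explicit infinite product
\[
f(z)=f_0^N(z)\prod_{k\ge1}\Bigl(1-\tfrac12(z/R_k)^{n_k}\Bigr),\qquad f_0(z)=z^2+c,
\]
with $c$ in the main cardioid and a rapidly growing sequence $R_k$. This $f$ has an unbounded set of critical points and possesses multiply connected wandering Fatou components, so $f\notin\mathcal{B}$. Near the origin $f$ is a polynomial-like perturbation of $f_0^N$, whose Julia set is a quasicircle of dimension $s\in(1,2)$; this supplies the lower bound $\Hdim(\cJ(f))\ge s-\epsilon$. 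The upper bound on packing dimension is obtained not by covering arguments on tracts but by showing $\cJ(f)$ has zero area and then estimating the critical exponent of a Whitney decomposition of the Fatou set inside a fixed annulus, exploiting the layered ``round'' and ``wiggly'' structure of the multiply connected components. The whole point is that leaving $\mathcal{B}$ is what makes $\Pdim<2$ possible.
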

Our techniques generate a family of entire functions, and we actually have the following stronger result.
\begin{theorem}
\label{Main2}
The set of packing dimensions attained is dense in $(1,2)$. In particular, let $s \in (1,2)$ and $\epsilon_0 > 0$ be given. Then there exists a transcendental entire $f$ so that
$$s -\epsilon_0 \leq \Hdim(\cJ(f)) \leq \Pdim(\cJ(f)) \leq s + \epsilon_0.$$ 
\end{theorem}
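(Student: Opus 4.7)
The plan is to deduce Theorem~\ref{Main2} from a parameterized version of the construction underlying Theorem~\ref{Main1}. I expect the entire function in Theorem~\ref{Main1} to be produced by a surgery technique such as Bishop's quasiconformal folding, in which one designs the tracts of $f$ (components of $f^{-1}(\{|z|>R\})$ for large $R$) to have a prescribed fractal geometry, and then reads off $\Pdim(\cJ(f))$ from data attached to the tract boundaries. The goal for Theorem~\ref{Main2} is to make this design flexible enough that both the packing and Hausdorff dimensions can be tuned to a common target $s\in(1,2)$ up to error $\epsilon_0$.

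First, I would introduce a family $\{f_{\mathbf{t}}\}$ depending on a sequence $\mathbf{t}=(t_n)_{n\ge 1}$ controlling the ``wing structure'' of the tracts at the $n$-th scale: a model to have in mind is a tract that is (essentially) a disk of radius $r_n$ decorated by many thin protrusions of length $\ell_n$, with $t_n$ encoding the number, length, and spacing of these protrusions. The packing dimension computation from Theorem~\ref{Main1} should generalize to an asymptotic formula of the shape
\[
\Pdim(\cJ(f_{\mathbf{t}})) \;=\; 1 + \limsup_{n\to\infty} \phi(t_n)
\]
for an explicit $\phi:(0,\infty)\to[0,1]$ whose range fills $[0,1]$. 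Choosing the $t_n$ suitably makes the $\limsup$ equal to any $s-1\in(0,1)$, which already yields density of packing dimensions in $(1,2)$.

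The additional requirement for Theorem~\ref{Main2} is the matching lower bound $\Hdim(\cJ(f_{\mathbf{t}}))\ge s-\epsilon_0$. Since Hausdorff dimension is sensitive to the worst scale, I would force the $t_n$ to be ``almost constant,'' with $\phi(t_n)\in[s-1-\epsilon_0,\,s-1+\epsilon_0]$ for all sufficiently large $n$. The lower bound then comes from a Frostman-type measure $\mu$ on $\cJ(f_{\mathbf{t}})$: distribute mass across the pullbacks of the decorated tracts via a Bernoulli-type scheme weighted by the geometric data of the wings, and then verify the Frostman condition $\mu(B(x,r))\le Cr^{s-\epsilon_0}$ using Koebe distortion for inverse branches of $f^n$ on disks that stay away from the singular values.

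The main obstacle will be this Hausdorff lower bound. The packing estimate $\Pdim(\cJ(f_{\mathbf{t}}))\le s+\epsilon_0$ depends only on upper covering numbers at the distinguished scales $r_n$ and should follow from a direct count of the wings. In contrast, the Frostman estimate demands \emph{uniform} distortion of iterates across \emph{all} wings at \emph{all} intermediate scales, which forces the ratios $r_{n+1}/r_n$ and the wing population at each level to be tightly controlled --- in effect, the construction must be approximately self-similar in the logarithmic coordinate. Balancing this near-self-similarity against the flexibility needed to make $\phi(t_n)$ converge to the prescribed target $s-1$, while simultaneously keeping the quasiconformal dilatation of the folding finite so the resulting map is genuinely entire, is where I expect the proof to be most delicate.
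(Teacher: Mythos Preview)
Your proposal is built on the wrong construction, and the mismatch is not cosmetic. The paper does \emph{not} use quasiconformal folding or tract design; the function is an explicit infinite product
\[
f(z) = f_0^N(z)\prod_{k\ge 1}\Bigl(1-\tfrac{1}{2}(z/R_k)^{n_k}\Bigr),\qquad f_0(z)=z^2+c,
\]
with $c$ chosen in the main cardioid so that $\Hdim(\cJ(z^2+c))=s$. Near the origin the product is a small multiplicative perturbation of $f_0^N$, so $f$ is a hyperbolic polynomial-like map whose Julia set $J_f$ is a quasicircle quasiconformally conjugate (with dilatation $\to 1$ as $R\to\infty$) to $\cJ(z^2+c)$. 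Since $J_f\subset\cJ(f)$, this gives $\Hdim(\cJ(f))\ge s-\epsilon_0$ with no Frostman argument at all. The packing upper bound $\Pdim(\cJ(f))\le s+\epsilon_0$ is then obtained by showing $\cJ(f)$ has zero area and bounding the critical exponent of a Whitney decomposition of the Fatou set restricted to a fixed annulus; the bulk of the work is controlling the ``buried'' points that move backward infinitely often through the annuli $A_k$, and comparing the wiggly multiply connected components (which accumulate on preimages of $J_f$) to Whitney cubes for the quasicircle.

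Your tract-with-wings picture is not just a different route---it runs into a structural obstruction the paper explicitly flags. A function whose geometry is governed by finitely many logarithmic tracts over $\{|z|>R\}$ is (or is very close to) a function in the Eremenko--Lyubich class $\mathcal{B}$, and by Rippon--Stallard every $f\in\mathcal{B}$ has $\Pdim(\cJ(f))=2$. So no amount of tuning $\phi(t_n)$ will bring the packing dimension below $2$ in that framework; the formula $\Pdim=1+\limsup\phi(t_n)$ you conjecture cannot hold. What makes the paper's examples escape this is that they have \emph{multiply connected wandering domains} (forcing $f\notin\mathcal{B}$), and the fractional packing dimension genuinely comes from the embedded quasicircle $J_f$, not from tract boundaries.
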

	
In \cite{Baker}, Baker proved that the Julia set of a transcendental entire function must always contain a compact connected set, and it follows immediately that the Hausdorff dimension of the Julia set must always be greater than or equal to $1$. In \cite{Mis}, Misiurewicz showed that the Julia set of $e^z$ was the entire complex plane, and in \cite{Mc} McMullen showed that the Julia sets of the exponential and sine families of entire functions always have Hausdorff dimension $2$, but need not be all of $\bC$. These examples can also have positive or zero area measure. Reducing the dimension of the Julia set is therefore the difficult task in the transcendental setting, and in \cite{S1}, Stallard constructed examples in the Eremenko-Lyubich class that had Hausdorff dimension arbitrarily close to $1$, and refined this result further in \cite{S3} and \cite{S4} to include all values in $(1,2)$. Moreover, in \cite{S2}, Stallard showed that in the Eremenko-Lyubich class the Hausdorff dimension must be strictly greater than $1$. Recently, in \cite{CB1}, Bishop constructed a transcendental entire function with Julia set having Hausdorff dimension $1$. This example demonstrates that all values of Hausdorff dimension in $[1,2]$ can be achieved. 

Less is known about the packing dimension in the transcendental setting. In \cite{RS}, Rippon and Stallard show that if $f$ belongs to the Eremenko-Lyubich class, then the packing dimension of the Julia set is $2$. Bishop computed the packing dimension of the Julia set of his example above to be 1. Our result is the first of its kind where the computed packing dimension is strictly between $1$ and $2$. Packing dimension and other various dimensions relevant to the paper are defined in Section $3$. Figure $1$ below summarizes what have been proven about the possible Hausdorff and packing dimensions attained by transcendental entire functions.

\begin{figure}[!h]
	\label{DimTri}
	\includegraphics{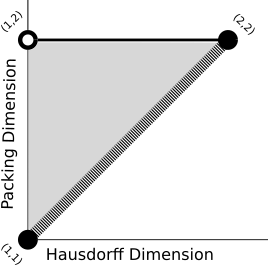}
	\caption{A graph showing the possible and attained Hausdorff and packing dimension pairs for transcendental entire functions. The point $(2,2)$ is attained by families of the exponential and sine functions. The upper segment is due to the work of Stallard, and the point $(1,1)$ is due to Bishop. Our contribution uses enlarged, dashed lines, to emphasize that have a dense set of dimensions attained very close to the diagonal.}
\end{figure}

We would like to point out how our construction differs from the constructions cited above. Since Stallard's examples belong to the Eremenko-Lyubich class, the packing dimension of those Julia sets must be $2$, even though the Hausdorff dimension can attain any value in $(1,2)$. The dynamical behavior of our examples is also much different; our functions have multiply connected Fatou components which do not occur in the Eremenko-Lyubich class. Stallard uses a family of functions defined via a Cauchy integral, whereas we use an infinite product construction similar to Bishop. Our example differs from Bishop's because not all of the Fatou components will be multiply connected. Instead of basing our construction off of a polynomial whose Julia set is a Cantor set, we base ours off of a polynomial whose Julia set is a quasicircle. It will follow that the entire function we construct also has Fatou components which get mapped onto an attracting basin whose boundary is a quasicircle. The multiply connected components of the Julia set will accumulate onto the boundary of these quasicircles, so that they are no longer close to being round everywhere. Bishop's techniques need to be extended to deal with both the copies of the attracting basin and the multiply connected Fatou components that are no longer round.

Finally, in his lecture at the 2019 Postgraduate Conference in Complex Dynamics, David Sixsmith asked what the possible dimensions of the \textit{bungee set} $BU(f)$ are for a transcendental entire function. The bungee set is definied in Section 14, and we refer the reader to \cite{OsbSix} for the basic properties of $BU(f)$. We are able to prove the following theorem.
\begin{theorem}
Given $s \in (1,2)$ and $\epsilon_0>0$, the function $f$ may be defined so that $\Hdim(BU(f)) \in (s-\epsilon_0, s+\epsilon_0)$. In other words, the possible dimensions of $BU(f)$ are dense in $(1,2)$. 
\end{theorem}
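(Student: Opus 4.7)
The upper bound $\Hdim(BU(f)) \leq s+\epsilon_0$ is immediate from $BU(f) \subset \cJ(f)$ combined with Theorem~\ref{Main2}, so the task is to produce a subset of $BU(f)$ whose Hausdorff dimension is at least $s-\epsilon_0$. The plan is to adapt the Cantor-type construction realizing the lower bound in Theorem~\ref{Main2} so that every surviving point has an orbit which simultaneously escapes to $\infty$ along some subsequence and accumulates on a bounded set along another.

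The structural feature my $f$ should exploit is that it carries two qualitatively distinct families of Fatou components: multiply connected Fatou components, whose points all lie in $I(f)$ by the Baker and Rippon-Stallard theorems, and simply connected Fatou components that are eventually mapped onto the attracting basin of the base polynomial (whose Julia set is a quasicircle), whose points have bounded orbits accumulating on the attracting fixed point. The Julia set $\cJ(f)$ accumulates on the boundaries of both families. I would set up the usual tree of preimage disks underlying the dimension lower bound in Theorem~\ref{Main2}, and then impose a combinatorial constraint along this tree: at a positive density subsequence of levels, force the itinerary through a disk whose large iterate lies near a multiply connected Fatou component (an ``escape move''), and at another positive density subsequence, force the itinerary through a disk whose large iterate lies near the polynomial attracting basin (a ``descent move''). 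Any point $z$ in the resulting Cantor subset $E$ then has $\{f^n(z)\}$ with both escaping and bounded subsequences, so $E \subset BU(f)$.

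The main obstacle is controlling the dimension cost of this combinatorial restriction. Because the branching in the underlying tree is exponentially large relative to the scaling ratio used in Theorem~\ref{Main2}, enforcing the two coloured moves at only a small positive density of levels should depress the counting dimension by an arbitrarily small amount; I would formalize this via a mass distribution argument, transporting the natural measure on the unconstrained Cantor set to a measure on $E$ and verifying that its local Hölder exponent is still at least $s-\epsilon_0$. A secondary technical point is that the escape and descent insertions must be placed so that they do not corrupt the bounded distortion and expansion estimates driving the lower-bound proof of Theorem~\ref{Main2}; since those estimates are local and the construction affords freedom to choose which preimage disk to pass through at each stage, this should reduce to selecting disks sitting in the correct regions of $\cJ(f)$. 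Assembling these pieces yields the required Cantor set $E \subset BU(f) \cap \cJ(f)$ with $\Hdim(E) \geq s - \epsilon_0$, and combined with the trivial upper bound this places $\Hdim(BU(f))$ in $(s-\epsilon_0, s+\epsilon_0)$ as claimed.
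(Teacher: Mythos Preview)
Your upper bound is fine and is exactly what the paper uses. The problem is entirely in the lower bound, and it is a real gap rather than a stylistic difference.

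You build your argument on ``the usual tree of preimage disks underlying the dimension lower bound in Theorem~\ref{Main2}'', but no such tree exists in this paper. The lower bound for $\Hdim(\cJ(f))$ here is not a Cantor construction at all: it is simply the inclusion $J_f\subset\cJ(f)$, where $J_f$ is the Julia set of the polynomial-like restriction of $f$ near the origin. That set is a \emph{connected quasicircle} of dimension close to $s$, and every point on it has bounded orbit. There is no branching scheme to modify, and your proposed ``escape moves'' and ``descent moves'' have nothing to sit on. Even read charitably as an independent construction, your sketch does not explain where the dimension $s$ would come from: the only object in the picture carrying dimension $s$ is $J_f$, and your Cantor set would have to shadow orbits on $J_f$ for long stretches (with controlled distortion) between excursions to large modulus. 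That is a genuine construction, not a small perturbation of an existing one, and nothing in your outline addresses it.

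The paper avoids all of this by quoting two black-box results. Rempe-Gillen shows that the Hausdorff dimension of the radial Julia set equals the hyperbolic dimension of $f$, and Rempe-Gillen--Urba\'nski show that the dimension of the radial Julia set equals the dimension of the subset of radial points with dense orbit in $\cJ(f)$. In the present example any point with dense orbit must lie in $BU(f)$ (its orbit visits both bounded and arbitrarily large regions), so $\Hdim(BU(f))$ dominates the hyperbolic dimension. Since $J_f$ is a hyperbolic subset of $\cJ(f)$ of dimension at least $s-\epsilon_0$, the lower bound follows in one line. If you want a self-contained argument, the honest route is to build an explicit hyperbolic-type subset of $BU(f)$ that shadows $J_f$; but that is a different proof from the one you wrote, and it is not what the paper does.
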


The author would like to thank Chris Bishop for suggesting this problem and for many useful conversations, suggestions, and for reading and offering detailed feedback on earlier drafts. David Sixsmith found many mistakes and typos and offered suggestions that greatly improved the exposition of this paper. The author would also like to thank Misha Lyubich, Lasse Rempe-Gillen, Gwyneth Stallard, and Phil Rippon for helpful discussions.

\section{Outline of the Proof}
We will construct a function $f: \bC \ra \bC$ depending on parameters $N \in \bN$, $R \in \bR$, and $c$ in the main cardiod of the Mandelbrot set. Define $f_0(z) = z^2 + c.$ The function $f$ will be an infinite product of the form
\begin{eqnarray}
\label{function}
f(z) = f_0^N(z) \cdot \prod_{k=1}^{\infty} \left(1 - \frac{1}{2}\left( \frac{z}{R_k} \right)^{n_k} \right) = f_0^N(z) (1 +\epsilon(z)).
\end{eqnarray}
Here, $n_k = 2^{N+k-1}$, and the sequence $\{R_k\}$ grows superexponentially and is defined inductively starting from a large initial parameter $R$. The choices are made so that near the origin, the infinite product can be made uniformly close to the constant function $1$. We write the infinite product as $(1+\epsilon(z))$ to emphasize this fact, where $\epsilon(z)$ is a holomorphic function uniformly close to the $0$ function in a large neighborhood of the origin.  


First, in section $5$, we will show that $f$ does indeed define an entire function. Given any $s \in (1,2)$, we will choose $c$ so that $\Hdim(J(f_0)) = s$. In a neighborhood of the origin, $f$ is a polynomial-like mapping which is close to $f_0^N$. We will construct a quaisconformal mapping with small dilatation mapping the Julia set of $f_0$ to the Julia set of the polynomial like mapping $f$.  It will follow that the Julia set of the entire function $f$ will have Hausdorff dimension bounded below by a value arbitrarily close to $s$ as well. 

In the next sections we prove several estimates on the growth of the sequence $\{R_k\}$, and decompose the plane into alternating annuli $A_n$ and $B_n$, where the modulus of $A_n$ is fixed and contains the circle $|z| = | R_n|$ and the modulus of $B_n$ increases as $n \ra \infty$. We will show that $f$ looks like a power function $z^m$ on $B_n$, that $f(B_n) \subset B_{n+1}$, and that if a point ever lands in $B_n$, it diverges locally uniformly to $\infty$ under $f$. Therefore, all the interesting dynamical behavior happens in the annuli $A_n$. We will show that $A_n \subset f(A_{n-1})$, and that all the zeros and critical points of $f$ and the Julia set are inside the $A_n$'s. To accomplish this, we will show (in a way that can be made precise) that $f$ is approximately equal to the $n$th term in the infinite product on $A_n$. 

From here, we will be able to prove that we can sort the Fatou components into two types depending on whether the component escapes to $\infty$ or remains bounded. The first type of Fatou component comes from the connected component containing the critical point $0$ of $f(z)$. This component is an attracting basin, and its inverse images form ``trapdoors'' in the sense that if $z$ is inside of one of the inverse images, $z$ will eventually land in the basin containing $0$ and remain there for the rest of its iteration. 

The second type are the components which are subsets of the escaping set. These components will be infinitely connected wandering domains, and the boundary of such components will be bounded by $C^1$ closed curves. These boundary curves will accumulate on the outermost boundary of each component. There is a distinguished sequence $\{\Omega_k\}_{k=-\infty}^{\infty}$ of these Fatou components which wind around the origin. We will split these components into two sub-categories. If $k \geq 1$, we will call $\Omega_k$ ``round" since the inner and outer boundary of $\Omega_k$ will be $C^1$ curves which are approximately circles. We will call $\Omega_k$ for $k \leq 0$ ``wiggly". The inner and outer boundary of wiggly components will be  $C^1$ curves that approximate the fractal boundary of the basin of attraction containing $0$. See Figure $2$. Since $\Omega_k$ is multiply connected, its complement can be split into components $\Omega_k^0$, the component contained in the origin, $\Omega_k^{\infty}$, the unbounded component, and countably many components $\Omega_k^a$ which are between the inner and outer boundary components. We will prove that some iterate of $f$ maps $\Omega_k^a$ conformally onto a domain bounded by the outer boundary of $\Omega_j$ for $j \geq 1$, and that the iterate of $f$ has small conformal distortion. It follows that $\Omega_k^a$ contains copies of $\Omega_l$ for $l \leq j$, and these copies look approximately like $\Omega_l$. 

\begin{figure}[!h]
\centering
\subfloat[Round Component]{\includegraphics[width=.4\textwidth]{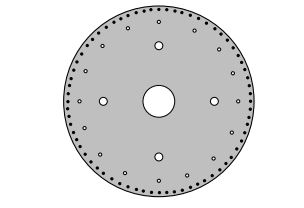}\label{fig:f1}}
\hfill
\subfloat[Wiggly Component]{\includegraphics[width=.4\textwidth]{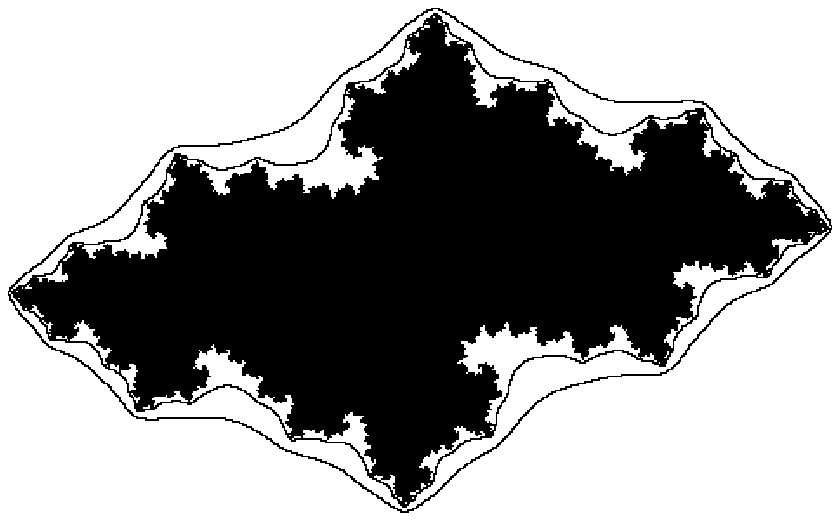}\label{fig:f2}}
\caption{On the left, we have a schematic of a round multiply connected Fatou component. This is the schematic for components $\Omega_k$ for $k \geq 1$. As $k \ra -\infty$, the components $\Omega_k$ trace the boundary of the fractal basin of attraction. On the right, we have the filled Julia set of $z^2 + c$ for $c = -.22 + .66i$ shaded in black. The boundaries of $\Omega_k$ for $k \leq 0$ look approximately like level lines for the Green's function of the complement of the filled Julia set, which we used to generate the image. Wiggly components are still multiply connected; we have omitted the holes in the picture on the right.}
\end{figure}

The Julia set of $f$ will contain the boundaries of each of these two types of components. This is not the entire Julia set though. Since $f$ has a multiply connected Fatou component, the work of Dominguez (\cite{Dom}) implies that the Julia set will also contain points that do not lie on the boundaries of either of these two types of components. These so-called ``buried" points in the Julia set either remain bounded, are in the bungee set, or escape, but they do not belong to the fast escaping set. In sections 10 through 13, we will perform a detailed analysis on the dimension of the set of these points. We will show that the dimension of this set is lower bounded by the dimension of the boundary of the basin of attraction containing $0$. While the dimension could possibly be larger than the dimension of the boundary of the basin of attraction, we show that we can make this difference in dimension arbitrarily small. 

To upper bound the packing dimension, we will follow \cite{CB1} and study the critical exponent of a Whitney decomposition of the complement of the Julia set of $f$ in a bounded region. Since the Julia set of $f$ will have zero area, it turns out that this critical exponent coincides with the packing dimension, and we will show that this exponent is at most the dimension of the buried points. The key idea in this part of the proof is to iterate small Fatou components to components of unit size where we can calculate the critical exponent of a Whitney decomposition directly. The tradeoff is that this conformal rescaling procedure results in various corrective factors that we must now control independently of which small Fatou component we chose. We do this with a combination of all the technical work done earlier in the paper.

\section{Hausdorff, Packing, and Minkowski Dimension}
Given a set $A \subset \bC$, we define its \textit{$\alpha$-Hausdorff measure} to be the quantity
$$H^{\alpha}(A) := \lim_{\delta \ra 0} H_{\delta}^{\alpha} (A) := \lim_{\delta \ra 0}\left( \inf \left \{\sum_{i=1}^{\infty} \diam(U_i)^{\alpha} \,: \, A \subset \cup_{i=1}^{\infty} U_i, \, \diam(U_i) < \delta\right \}\right).$$
The infimum is taken over all countable covers $\{U_i\}$ of A. One can check that if $H^{t}(A) < 0$, then $H^{s}(A) = 0$ for all $s > t$, and similarly, if $H^{t}(A) > 0$, then $H^s(A)  = \infty$ for all $s < t$. It follows that the \textit{Hausdorff dimension}
$$\Hdim (A) := \sup \{t: \, H^t(A) = \infty\} = \inf\{t: H^t(A) = 0\}$$
is uniquely defined. 


Given a compact set $K \subset \bC$, define $N(K,\epsilon)$ to be the minimal number of open balls of radius $\epsilon$ needed to cover $K$. Since $K$ is compact, this number exists and is finite. We define the \textit{upper Minkowski dimension} of $K$ to be 
$$\UMdim(K) = \limsup_{\epsilon \ra 0} \frac{\log(N(K,\epsilon))}{\log(1/\epsilon)} = \sup\{s \geq 0: \limsup_{\epsilon \ra 0} N(K,\epsilon)\epsilon^s = 0\}.$$
Equivalently, one may consider coverings of $K$ by squares of a fixed edge length, and since the diameters of squares and balls are comparable, this would not change the definitions above. For this reason, the upper Minkowski dimension is often called the \textit{upper box counting dimension}, although we will favor the former notation. 

In this paper, we will investigate the upper Minkowski and packing dimension of \textit{unbounded} Julia sets, so strictly speaking, the definition above does not make sense. We can instead consider the \textit{local upper Minkowski} dimension of the Julia set, which is the upper Minkowski dimension of the Julia set intersected with an open neighborhood of finite diameter. In \cite{RS}, Rippon and Stallard show that the local upper Minkowski dimension of the Julia set of an entire function is constant and coincides with its packing dimension (defined below), except perhaps in a neighborhood of $1$ point (a point with finite backward orbit; there is at most $1$ by the Picard theorem). Our example will not have an exceptional value of this kind, so their result further implies that the packing dimension and local upper Minkowski dimension are the same, no matter where we measure the local Minkowski dimension. In light of this, we will abuse notation and refer to the local upper Minkowski dimension of $\cJ(f)$ by $\UMdim(\cJ(f))$; the neighborhood we are using will always be made clear.

One reason to use the packing dimension is that the upper Minkowski dimension has poor measure theoretic qualities. In particular, one can check for the set $K = \{0\} \cup \{1/n:\, n= 1,2,\dots\}$ has Minkowski dimension $1/2$, but Hausdorff dimension $0$ since it is countable. This means that Minkowski dimension does not satisfy
$$\UMdim( \cup A_i) = \sup \UMdim(A_i).$$ 
To fix this issue, we define the \textit{packing dimension} of $K$ to be 
$$\Pdim(K) = \inf\left\{ \sup_i \{ \UMdim K_i :\, K = \cup K_i \} \right\}.$$
Here, the infimum is taken over all partitions of $K$ into countably many subsets $K_i$. The packing dimension can also be defined in terms of $\alpha$-packing measures, see Section $2.7$ in \cite{BP}. In practice, it is difficult to compute packing dimension using either of these definitions. We will instead use the strategy described below. 

For $n \in \bZ$, let $D_n$ denote the $nth$ generation of dyadic intervals 
$$I = [j2^{-n},(j+1)2^{-n}], \quad j \in \bZ.$$
We call $Q \subset \bC$ a \textit{dyadic cube} if it is the product of $2$ dyadic intervals inside the same $D_n$. Dyadic cubes satisfy the following simple but useful properties:
\begin{enumerate}
	\item The side length of a cube $Q$ is $l(Q) = 2^{-n}$, and its diameter is $\diam(Q) = \sqrt{d} \cdot l(Q)$. 
	\item Each dyadic cube is contained inside a unique cube parent cube $Q^{\uparrow}$ with the property that $\diam(Q^{\uparrow}) = 2 \diam(Q)$. 
	\item Given two dyadic cubes $Q_1$ and $Q_2$, either $Q_1$ or $Q_2$ have disjoint interiors, or one is contained in the other. 
\end{enumerate}
If $\Omega \subset \bC$ is open, each $x \in \Omega$ belongs to a dyadic cube $Q \subset \Omega$ with the additional property that $\diam(Q) \leq \dist(Q,\pa \Omega)$. By property (3), each point is contained in a cube of maximal possible diameter. It follows that the cubes $\{Q_j\}$ form a \textit{Whitney decomposition} of $\Omega$, that is, a collection of cubes that are have disjoint interior, cover $\Omega$, and satisfy the estimate
\begin{eqnarray}
\label{Whit}
\frac{1}{C} \dist(Q_j, \pa \Omega) \leq \diam(Q_j) \leq C \dist(Q_j, \pa \Omega),
\end{eqnarray}
for some $C >1$. More generally, we may consider Whitney decompositions of $\Omega$ with subsets that are not dyadic cubes, but are instead sets with disjoint interior whose closures cover $\Omega$ and satisfy the same estimate above. Whitney decompositions are also conformally invariant in the following sense: if $f: \Omega \ra f(\Omega)$ is a conformal map, and $Q$ is a cube in some Whitney decomposition of $\Omega$, then $f(\Omega)$ is covered by at $M$ cubes in a Whitney decomposition of $f(\Omega)$, where $M$ only depends on the constants in $(\ref{Whit})$ and not on the conformal mapping (see \cite{Garnett}, p. 21).

Whitney decompositions allow us to define for any compact $K \subset \bC$ the \textit{critical exponent}
$$\alpha = \alpha(K) = \inf\{ \alpha: \, \sum \diam(Q)^{\alpha} < \infty\}$$
where the sum is taken over all cubes in a Whitney decomposition $\bC \setminus K$ within some bounded distance of $K$. The critical exponent is well-defined; it does not depend on which Whitney decomposition we choose for the complement of $K$. The key point is that given two Whitney decompositions of a domain $\Omega$, and a cube inside one of the decompositions, it can be covered by a finite number of cubes in the other collection, and this finite number depends only on the constants in (\ref{Whit}) and the dimension $d$. The following is Lemma $2.6.1$ in \cite{BP}, which relates the critical exponent of the complement of a set to its upper Minkowski dimension.
\begin{lemma}
	For any compact $K \subset \bC$, $\alpha(K ) \leq \UMdim(K)$. If $K$ has zero Lebesgue measure then we have equality.
	\label{VE}
\end{lemma}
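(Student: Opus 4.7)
The plan is to sort the Whitney cubes of $\bC \setminus K$ by dyadic generation and relate their counts to covering numbers of $K$. Let $M_n$ denote the number of Whitney cubes of side length $2^{-n}$ lying in a fixed bounded neighborhood of $K$; since $\diam(Q) = \sqrt{2} \cdot 2^{-n}$ for each such cube, the critical exponent is governed by
\[
\sum_{Q} \diam(Q)^s \;\asymp\; \sum_{n \geq 0} M_n \, 2^{-ns}.
\]
The Whitney property $(\ref{Whit})$ gives $c \cdot 2^{-n} \leq \dist(Q,K) \leq C \cdot 2^{-n}$ for each generation-$n$ cube, and this is the key geometric link between Whitney cubes and coverings of $K$ in both directions.

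For $\alpha(K) \leq \UMdim(K)$, I would fix $s > \UMdim(K)$ and pick $t \in (\UMdim(K), s)$; by definition of upper Minkowski dimension $N(K, 2^{-n}) \leq 2^{nt}$ for all large $n$. Since each generation-$n$ cube near $K$ lies in a ball of radius $O(2^{-n})$ centered at a point of $K$, covering $K$ by $N(K, 2^{-n})$ such balls and noting that each absorbs only $O(1)$ disjoint cubes of side length $2^{-n}$ yields $M_n \lesssim N(K, 2^{-n}) \lesssim 2^{nt}$. Then $\sum_n M_n 2^{-ns} \lesssim \sum_n 2^{-n(s-t)} < \infty$, so $\alpha(K) \leq s$, and letting $s \downarrow \UMdim(K)$ completes this half.

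For the reverse inequality under $|K| = 0$, I would fix $s > \alpha(K)$, so that $\sum_n M_n 2^{-ns}$ converges and in particular $M_n = o(2^{ns})$. The zero-measure hypothesis yields $|K_\epsilon| = |K_\epsilon \setminus K|$, which is the total area of the Whitney cubes meeting $K_\epsilon$. Because $\dist(Q, K) \geq c \cdot 2^{-m}$ for a generation-$m$ cube, only cubes with $m \geq n - O(1)$ can meet $K_{2^{-n}}$, so
\[
|K_{2^{-n}}| \;\leq\; \sum_{m \geq n - O(1)} M_m \cdot 2^{-2m} \;\lesssim\; \sum_{m \geq n} 2^{-m(2-s)} \;\lesssim\; 2^{-n(2-s)}
\]
(assuming $s<2$; the case $s \geq 2$ is immediate from $\UMdim(K) \leq 2$). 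Combined with the standard estimate $N(K,\epsilon) \asymp |K_\epsilon|/\epsilon^2$, this gives $N(K, 2^{-n}) \lesssim 2^{ns}$ and hence $\UMdim(K) \leq s$; letting $s \downarrow \alpha(K)$ finishes the proof.

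The essential obstacle is precisely the $|K| = 0$ hypothesis in the reverse inequality: without it, $|K_\epsilon| \geq |K| > 0$ for every $\epsilon$, so the geometric decay that drives the Whitney-to-Minkowski estimate disappears. The only minor subtlety is that one uses the convergence-based bound $M_n = o(2^{ns})$ rather than a pointwise power bound, which is nevertheless enough once the tail sum $\sum_{m \geq n} 2^{-m(2-s)}$ is collapsed geometrically.
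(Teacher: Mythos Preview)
Your argument is correct and is essentially the standard proof; the paper itself does not supply a proof but simply cites this as Lemma~2.6.1 of \cite{BP}. Your sketch is precisely the argument given there: count Whitney cubes by generation, bound $M_n$ above by $N(K,2^{-n})$ via the Whitney distance estimate for the easy inequality, and for the reverse inequality use $|K|=0$ to write $|K_{2^{-n}}|$ as the total area of nearby Whitney cubes, then sum geometrically. One small cleanup: rather than invoking $M_n = o(2^{ns})$ and then writing $\lesssim$, it is tidier to factor directly as
\[
\sum_{m \geq n} M_m \, 2^{-2m} = \sum_{m \geq n} (M_m \, 2^{-ms}) \, 2^{-m(2-s)} \leq 2^{-n(2-s)} \sum_{m} M_m \, 2^{-ms},
\]
which makes the constant explicit and avoids any ambiguity about uniformity in the little-$o$.
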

To summarize this section, for our application, we have 
$$\Hdim(\cJ(f)) \leq \Pdim(\cJ(f)) = \UMdim(\cJ(f)).$$
Therefore, to obtain Julia sets with packing dimension in $(1,2)$, our approach will be to show that given $s \in (1,2)$ and $\delta > 0$, we can define $f$ so that $\Hdim(\cJ(f)) >s - \delta$ and $\UMdim(\cJ(f)) < s + \delta$, for which we will use the lemma above. A complete discussion of these various dimensions, including proofs, can be found in \cite{BP}.

\section{Quasiconformal and Polynomial-Like Mappings}
Douady and Hubbard introduced polynomial-like mappings in \cite{Hub}. In this section we outline and review the properties we will need in this paper.

First, recall that a continuous mapping $f: U \ra V$ between two topological spaces is called \textit{proper} if the inverse image of every compact $K \subset V$ is compact.  A \textit{degree $d$ polynomial-like map} is a triple $(f,U,V)$, where $f: U \ra V$ is a proper holomorphic mapping of degree $d$, and $U$ and $V$ are homeomorphic to disks with $U$ is relatively compact in $V$. We define the \textit{filled Julia set} of $f$ by 
$$K_f := \bigcap_{n\geq0} f^{-n}(U).$$
This is precisely the set of points that remain in $U$ for all iterates of $f$. The Julia set of $f$ is defined to be the boundary $\pa K_f$, and we denote it by $J_f$.

Many of the classical propositions in the dynamics of polynomials are true for polynomial-like mappings. For example, suppose that $B \subset K_f$ is the immediate basin of attraction for some attracting fixed point $p \in B$. The same proof as in the case of rational functions that $B$ contains a critical point of $f$ applies equally well to the polynomial-like case. Hyperbolicity also makes sense in the setting of polynomial-like maps. We say $f: U \ra V$ is \textit{hyperbolic} if there exists a Riemannian metric on $J_f$ and $a > 1$ so that if $z \in J_f$ we have $||D_zf(v)||_{f(z)} \geq a ||v||_z$ where $v \in T_z J_f$ is a tangent vector. With the same proof as the case of rational mappings, this definition is equivalent to the definitions that $|(f^N)'| > 1$ on $J$ for some $N$, or that every critical point is attracted to an attracting fixed point or cycle. In our applications, the polynomial-like map $f$ will come as the restriction of an entire function, and  it will be important that we distinguish between $f$ being hyperbolic as a polynomial-like map, versus $f$ being hyperbolic as a transcendental entire function (which our example cannot be, since it has an unbounded set of critical points. See \cite{LRG}.) 

The usefulness of polynomial-like mappings comes from the straightening lemma (Theorem 1, p. 296 in \cite{Hub}), which gives a quasiconformal conjugacy between the polynomial-like map and some polynomial of the same degree. To lower bound the Hausdorff dimension of the Julia set of our examples, we will construct a quasiconformal conjugacy in a way similar to the straightening lemma. If $U$ and $V$ are planar domains, we call an orientation preserving homeomorphism $\varphi: U \ra V$ \textit{$K$-quasiconformal} if $\varphi$ has locally square integrable distributional derivatives which satisfy
$$|\varphi_{\bar z}(z)| \leq k |\varphi_z(z)|$$
for all $z \in U$ for $k = (K-1)/(K+1) < 1.$ Given a quasiconformal mapping, we define its \textit{dilatation} 
$$\mu(z) = \frac{\varphi_{\bar z}(z)}{\varphi_z(z)}.$$
The definition says that the dilatation is bounded above by some number less than $1$. We will need the following fact about quasiconformal mappings. The proof can be found in Section IV.5.6 of \cite{LV}.

\begin{lemma}[The Good Approximation Lemma]
	\label{GAL}
	Suppose that $\varphi_n: U \ra V$ is a sequence of quasiconformal mappings. Suppose that $\varphi_n \ra \varphi$ uniformly on compact sets, and the corresponding dilatation $\mu_n$ of $\varphi_n$ converges pointwise almost everywhere to some limit almost everywhere. Then $\varphi$ is quasiconformal with dilatation $\mu = \lim \mu_n$.
\end{lemma}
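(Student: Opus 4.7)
The plan is to leverage three standard facts about quasiconformal mappings: compactness of the family of $K$-quasiconformal maps in the uniform-on-compacta topology, membership of the distributional derivatives in $L^2_{loc}$ with locally uniform bounds, and the Beltrami equation $\varphi_{\bar z} = \mu \, \varphi_z$. I will implicitly assume a uniform bound $|\mu_n(z)| \leq k < 1$, so that every $\varphi_n$ is $K$-quasiconformal for a common $K = (1+k)/(1-k)$; without this the limiting map cannot be expected to be quasiconformal at all, so this assumption must be read into the hypothesis.

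First I would verify that the limit $\varphi$ is itself $K$-quasiconformal. The family of $K$-quasiconformal mappings from $U$ to $V$ is normal in the uniform-on-compacta topology, and any such limit is either $K$-quasiconformal or constant; uniform convergence plus injectivity rules out degeneracy in the non-trivial case. Consequently $\varphi$ has distributional derivatives $\varphi_z, \varphi_{\bar z} \in L^2_{loc}(U)$.

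Next I would extract weak $L^2_{loc}$ convergence of the partial derivatives. The $K$-quasiconformality of $\varphi_n$ implies $\| \varphi_z^n \|_{L^2(K')}, \| \varphi_{\bar z}^n\|_{L^2(K')}$ are bounded uniformly in $n$ on any compact $K' \subset U$, via the standard modulus/Jacobian comparison. Uniform convergence $\varphi_n \ra \varphi$ provides distributional convergence of the derivatives, which combined with the $L^2_{loc}$ bound upgrades to
\[
(\varphi_n)_z \rightharpoonup \varphi_z, \qquad (\varphi_n)_{\bar z} \rightharpoonup \varphi_{\bar z} \qquad \text{weakly in } L^2_{loc}(U),
\]
for the full sequence, since the weak limit is determined by the distributional limit.

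The final step is to pass to the limit in the Beltrami equation $(\varphi_n)_{\bar z} = \mu_n (\varphi_n)_z$. For any test function $\xi \in C_c^\infty(U)$,
\[
\int \xi \, (\varphi_n)_{\bar z} \, dA \;=\; \int (\xi \mu_n)\, (\varphi_n)_z \, dA.
\]
The left side converges to $\int \xi \, \varphi_{\bar z} \, dA$ by weak convergence. For the right side, $\xi \mu_n \ra \xi \mu$ pointwise a.e.\ and is dominated by $|\xi|$, so $\xi \mu_n \ra \xi \mu$ strongly in $L^2$ by dominated convergence; pairing strong with weak convergence, the right side tends to $\int \xi \mu \, \varphi_z \, dA$. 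Equating limits gives $\varphi_{\bar z} = \mu \, \varphi_z$ a.e., which identifies the dilatation of $\varphi$ as $\mu$. The main obstacle is precisely this last step: the product $\mu_n (\varphi_n)_z$ is of an a.e.-convergent sequence with a weakly convergent one, and such products need not converge to the product of the limits in general; the uniform $L^\infty$ bound $|\mu_n| \leq k < 1$ together with dominated convergence is what converts the a.e.\ convergence of $\mu_n$ into the strong $L^2$ convergence needed to pair with the weak convergence of $(\varphi_n)_z$.
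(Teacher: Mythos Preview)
The paper does not supply its own proof of this lemma: immediately after the statement it simply says ``The proof can be found in Section IV.5.6 of \cite{LV}'' (Lehto--Virtanen). So there is no in-paper argument to compare against.

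Your proposal is the standard textbook argument and is essentially what one finds in Lehto--Virtanen: closedness of the class of $K$-quasiconformal maps under locally uniform limits, uniform local $L^2$ bounds on the distributional derivatives yielding weak $L^2_{loc}$ convergence, and then passing to the limit in the Beltrami equation by pairing the strong $L^2$ convergence of $\xi\mu_n$ (from dominated convergence) with the weak $L^2$ convergence of $(\varphi_n)_z$. You were right to flag explicitly the need for a uniform bound $|\mu_n|\le k<1$; the paper's statement is slightly informal on this point, but it is used only in the setting where all the $\varphi_n$ are $(1+\delta)$-quasiconformal for a fixed $\delta$, so the uniform bound is present in the application.
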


\section{Defining the Function}
In this section, we specify the parameters defining $f$ and show that it is an entire function. Along the way we will prove some basic estimates regarding the parameters $\{R_k\}$ and $n_k$ defining $f$ that will be useful later in the paper.  

Recall that the main cardioid of the Mandelbrot set is the region consisting of all parameters $c = \mu/2(1-\mu/2)$, where $\mu \in \bD$. If $c$ is a parameter in the main cardioid, the Julia set of $z^2 + c$ is a quasicircle with an attracting fixed point in its interior.  As mentioned in the introduction, the results of Shishikura and Sullivan imply that for each $s \in (1,2)$, we may choose $c$ in the main cardioid so that $\Hdim(\cJ(F_0)) = \Pdim(\cJ(F_0(z)) = s$ (see \cite{Shish} p.232 and \cite{DS1} p.742, along with Theorem 7.6.7 in \cite{Urb}).

Having chosen such a $c$, recall that we defined $f_0(z) = z^2 + c$, and $f_0^N(z)$ denotes the $N$th iterate of $f_0$. Since $f_0^N$ is a degree $2^N$ monic polynomial there exists some $R> 0$ so that if $|z| \geq R$ we have
\begin{eqnarray}
\frac{1}{2} \leq \left | \frac{f^N_0(z)}{z^{2^N}} \right | \leq 2.
\label{one}
\end{eqnarray}
We will always assume $R$ is big enough so that $(\ref{one})$ holds.

Next given some integer $N > 0$ define a sequence of integers for $k =0,1,2 \dots$
$$n_k := 2^{N+k-1}.$$
Note that when $k \neq 0$, $n_k \geq 2^N$ and for all $k$ we have $2n_k = n_{k+1}$.
Given the $R$ above, define 
$$R_1 = 2 R.$$
We will construct our infinite product as a sequence of partial products inductively as follows. Given $R$ as above we can define
$$F_1(z) :=  \left(1 - \frac{1}{2}\left( \frac{z}{R_1} \right)^{n_1} \right),$$
$$f_1(z) := f^N_0(z) \cdot F_1(z),$$
$$R_2 :=  M(f_1, 2R_1) = \max\{ |f_1(z)| \,:\, |z| = 2R_1\}.$$
Next, assume that $f_{k-1}$, $F_{k-1}$ and $R_k$ have all been defined. From there, we define
$$F_k(z) := \left(1 - \frac{1}{2}\left( \frac{z}{R_k} \right)^{n_k} \right),$$
$$f_k(z) := f_0^N(z) \prod_{j=1}^{k} F_j(z),$$
$$R_{k+1} := M(f_k, 2R_k) = \max\{ |f_k(z)| \,:\, |z| = 2R_k\}.$$
With these starting parameters, we want to begin by looking at functions of the form
\begin{eqnarray}
f(z) = \lim_{k \ra \infty} f_k(z) = \lim_{k \ra \infty} f_0^N(z) \prod_{j=1}^k F_j(z).
\label{Def}
\end{eqnarray}
Viewing this as a formal infinite product, our first step will be to show that $f$ is indeed an entire function on $\bC$. 

\begin{lemma}[The Growth Rate of $n_k$]
	For all $k =1,2,\dots$, we have
	\begin{enumerate}
		\item $n_k = 2n_{k-1}$, and $n_k \geq 2^N$.
		\item $2^N+ \sum_{j=1}^{k} n_j =  n_{k+1}$.
	\end{enumerate}
	\label{nk}
\end{lemma}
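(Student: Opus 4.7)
Both assertions are direct consequences of the explicit formula $n_k = 2^{N+k-1}$. The plan is to verify (1) by a one-line computation and then obtain (2) from the geometric series formula, with induction available as a backup.

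For (1), I would simply observe $n_k = 2^{N+k-1} = 2 \cdot 2^{N+k-2} = 2 n_{k-1}$, and since $k \geq 1$ we have $n_k = 2^{N+k-1} \geq 2^N$. No further argument is needed.

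For (2), I would compute the geometric sum directly:
\begin{equation*}
\sum_{j=1}^{k} n_j \;=\; \sum_{j=1}^{k} 2^{N+j-1} \;=\; 2^N \sum_{j=1}^{k} 2^{j-1} \;=\; 2^N(2^k - 1) \;=\; 2^{N+k} - 2^N.
\end{equation*}
Adding $2^N$ to both sides gives $2^N + \sum_{j=1}^{k} n_j = 2^{N+k} = n_{k+1}$, as required. Alternatively, one can prove (2) by induction on $k$: the base case $k=1$ reads $2^N + n_1 = 2^N + 2^N = 2^{N+1} = n_2$, and the inductive step uses (1) to pass from $k$ to $k+1$ via $n_{k+1} + n_{k+1} = 2 n_{k+1} = n_{k+2}$.

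There is no real obstacle here; the statement is essentially a record of the arithmetic properties of the sequence $\{n_k\}$ that will be invoked repeatedly in the later estimates on $\{R_k\}$ and in comparing partial products of the $F_j$ to power functions on the annuli $A_n$ and $B_n$.
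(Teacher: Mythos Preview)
Your proof is correct and essentially identical to the paper's: the paper also declares (1) immediate from the definition and proves (2) by directly summing the geometric series $\sum_{j=1}^k 2^{N+j-1} = 2^{N+k} - 2^N$, differing only cosmetically in factoring out $2^{N-1}$ rather than $2^N$.
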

\begin{proof}
	The first claim immediate from the definitions. For the second claim we compute
	\begin{eqnarray*}
		\sum_{j=1}^{k} n_j &=& (2^N + \dots + 2^{N+ k-1} ) 
		= 2^{N-1}(2 +\dots + 2^{k} ) \\
		&=& 2^{N-1}(2^{k+1} - 2) 
		= 2^{N+k} - 2^N \\
		&=& n_{k+1} - 2^N.
	\end{eqnarray*}
	This is just a rearranged version of $(2)$.
\end{proof}

\begin{corollary}
	For all $k \geq 1$, $\deg(f_k) = 2\deg(F_k)$
	\label{nk2}
\end{corollary}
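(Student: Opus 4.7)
The plan is to compute $\deg(f_k)$ directly from the defining product and invoke Lemma \ref{nk}. Since $f_0^N$ is a degree $2^N$ polynomial and each $F_j$ is a polynomial of degree $n_j$ in $z$, the product formula
$$f_k(z) = f_0^N(z) \prod_{j=1}^k F_j(z)$$
gives $\deg(f_k) = 2^N + \sum_{j=1}^{k} n_j$.

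Next I would apply part (2) of Lemma \ref{nk}, which identifies exactly this quantity: $2^N + \sum_{j=1}^k n_j = n_{k+1}$. Thus $\deg(f_k) = n_{k+1}$.

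Finally, I would use part (1) of Lemma \ref{nk}, which says $n_{k+1} = 2 n_k$. Since $\deg(F_k) = n_k$ by inspection, this yields
$$\deg(f_k) = n_{k+1} = 2 n_k = 2\deg(F_k),$$
which is the desired identity. There is no real obstacle here: the corollary is an immediate bookkeeping consequence of the recursion for $n_k$ established in the preceding lemma, and the only thing to check is that the exponents in the product telescope correctly through $\sum n_j$.
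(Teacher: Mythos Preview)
Your proof is correct and follows essentially the same approach as the paper: the paper simply says to apply Lemma~\ref{nk} to the equality $\deg(f_k) = 2^N + \sum_{j=1}^{k} n_j$, and you have spelled out exactly those steps.
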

\begin{proof}
	Apply Lemma \ref{nk} to the equality $\deg(f_k) = 2^N + \sum_{j=1}^{k} n_j$.
\end{proof}

We can deduce the following growth rate estimates for $R_k$.
\begin{lemma}[The Growth Rate of $R_k$]
	If $k \geq 1$, $R$ satisfies $(\ref{one})$, and if $N \geq 10$, we have 
	$$R_{k+1} \geq 2^{n_k} R_k^{2^{N-1}+n_{k-1}} \geq 2^{N} R_k^{2^N}.$$
	\label{Rk}
\end{lemma}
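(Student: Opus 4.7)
The plan is to estimate $R_{k+1} = M(f_k, 2R_k)$ from below by evaluating $|f_k|$ at the single point $z = 2R_k$ and bounding each factor of the product $f_k = f_0^N \cdot \prod_{j=1}^k F_j$ separately. Estimate $(\ref{one})$ gives $|f_0^N(2R_k)| \geq \frac{1}{2}(2R_k)^{2^N}$. At $z = 2R_k$ the factor $F_k$ equals $1 - \frac{1}{2} \cdot 2^{n_k}$, whose modulus is at least $2^{n_k - 2}$. For each $j < k$, the relation $R_k \geq R_j$ forces $(2R_k/R_j)^{n_j} \geq 2^{n_j} \geq 4$, so $|F_j(2R_k)| \geq \frac{1}{4}(2R_k/R_j)^{n_j}$. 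Multiplying these factors together and using Lemma \ref{nk}(2) to collapse $\sum_{j=1}^{k-1} n_j = n_k - 2^N$, I would arrive at an inequality of the shape
$$R_{k+1} \geq 2^{2n_k - 2k - 1} R_k^{n_k} \prod_{j=1}^{k-1} R_j^{-n_j}.$$

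The main obstacle is controlling the negative-exponent product $\prod R_j^{-n_j}$, which could in principle erode the $R_k^{n_k}$ factor. I plan to handle this by induction on $k$, relying on the (stronger) auxiliary inequality $R_k \geq R_{k-1}^2$; this auxiliary inequality is implied by the lemma applied at index $k-1$ because $2^{N-1} + n_{k-2} \geq 2$ whenever $N \geq 2$. Given it, $R_j \leq R_{k-1} \leq R_k^{1/2}$ for every $j \leq k-1$, so $\prod R_j^{n_j} \leq R_k^{(n_k - 2^N)/2}$. Substituting, the exponent on $R_k$ becomes $n_k - (n_k - 2^N)/2 = n_{k-1} + 2^{N-1}$, exactly as required. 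To upgrade the power of $2$ from $2^{2n_k - 2k - 1}$ to $2^{n_k}$, I only need $n_k \geq 2k + 1$, which is generous since $N \geq 10$ and $n_k = 2^{N+k-1}$ grows doubly exponentially.

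The base case $k = 1$ is handled directly: the product over $j < k$ is empty, so the estimate reduces to $R_2 \geq 2^{2n_1 - 3} R_1^{n_1}$, and since $n_1 = 2^N = 2^{N-1} + n_0$ together with $2n_1 - 3 \geq n_1$ (valid as $n_1 \geq 3$), this matches the claim at $k = 1$. Finally, the second inequality $2^{n_k} R_k^{2^{N-1} + n_{k-1}} \geq 2^N R_k^{2^N}$ is a routine comparison: $n_k \geq 2^N \geq N$, the bound $n_{k-1} \geq 2^{N-1}$ gives $2^{N-1} + n_{k-1} \geq 2^N$, and $R_k \geq R_1 = 2R > 1$.
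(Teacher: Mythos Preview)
Your proof is correct and follows essentially the same route as the paper's: bound each factor of $f_k$ on $|z|=2R_k$ via $(\ref{one})$ and direct estimation of the $F_j$'s, then run an induction whose hypothesis yields $R_j \leq R_k^{1/2}$ for $j\leq k-1$, and collapse the exponents using Lemma~\ref{nk}(2). The only cosmetic difference is that you evaluate at the single real point $z=2R_k$ rather than writing the maximum over the circle, which is a harmless simplification since $R_{k+1}$ is defined as that maximum.
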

\begin{proof}
	$R$ is big enough so that $(\ref{one})$ holds, so that
	\begin{eqnarray*}
	R_2 &:=& \max_{|z|=2R_1} |f_0^N(z)| \cdot \left|\left(1-\frac{1}{2}\frac{z^{n_1}}{R_1^{n_1}} \right)\right| \\
	&\geq& \frac{1}{2} |2R_1|^{2^N} \cdot \max_{|z|=2R_1} \left|\left(1-\frac{1}{2}\frac{z^{n_1}}{R_1^{n_1}} \right)\right| \\
	&\geq& 2^{2^N-1} R_1^{2^N} \cdot (2^{n_1-1}-1) \\
	&\geq& 2^{2^N} R_1^{2^N} = 2^{n_1} R_1^{2^{N-1} +n_0} \geq 4 R_1^2.
	\end{eqnarray*}
	This is the base case for an induction. Suppose that for some $k$, we have 
	$$R_j \geq 2^{n_{j-1}} R_j^{2^{N-1}+ n_{j-2}} \geq 2^{2^N}  R_j^{2^N} \geq 4 R_j^2$$
	for all $2 \leq j \leq k$. This induction hypothesis implies that $\sqrt{R_{k}} \geq R_j$ for all $j \leq k-1$. Therefore,
	\begin{eqnarray*}
	R_{k+1} &:=& \max_{|z| = 2R_k} |f_0^N(z)| \cdot \prod_{j=1}^k \left| \left(1- \frac{1}{2}\frac{z^{n_j}}{R_j^{n_j}} \right)\right|\\
	&\geq& 2^{2^N-1}R_k^{2^N} \prod_{j=1}^k \left|2^{n_j-1}\frac{R_k^{n_j}}{R_j^{n_j}}-1 \right| \\
	&\geq&  2^{2^N-1}R_k^{2^N} (2^{n_k-1} -1) \prod_{j=1}^{k-1} \left|2^{n_j-1} R_k^{n_{j-1}} - 1\right| \\
	&\geq& 2^{2^N-2(k+1)+\sum_{j=1}^k n_j } R_k^{2^N+\sum_{j=1}^{k-1} n_{j-1}} \\
	&\geq& 2^{2^N -2(k+1) + n_{k+1}} R_k^{2^N + n_{k-1}} \\
	&\geq& 2^{2^N + n_k} R_k^{2^N + n_{k-1}}. 
	\end{eqnarray*}
To get the last inequality, we used the fact that $n_{k+1} = 2^{N+k} \geq 2(k+1)$ when $N \geq 10$ for all $k \geq 1$. Therefore $R_{k+1}$ satisfies the inequality in the lemma.  
\end{proof}

For the rest of the paper, we will always assume that $N \geq 10$, so that the conclusion of Lemma $\ref{Rk}$ is always valid. The lemma above also contains the following simpler inequalities that will often be sufficient for our purposes. We note them below.
\begin{corollary}[Other Useful Inequalities]
	For $k \geq 1$ we have 
	$$R_{k+1} \geq 4 R_k^{2}.$$
	For $k \geq 1$ we have:
	$$R_{k+1} \geq (2R)^{2^{kN}}.$$
	\label{Rk2}
\end{corollary}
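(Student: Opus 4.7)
The plan is to obtain both inequalities as direct weakenings of the sharper estimate from Lemma \ref{Rk}, namely $R_{k+1} \geq 2^N R_k^{2^N}$, together with the fact that $R_k$ is large and $N \geq 10$.

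For the first inequality $R_{k+1} \geq 4 R_k^2$, I would simply observe that since $N \geq 10$ we have $2^N \geq 1024 \geq 4$, and since $R_1 = 2R \geq 1$ and the sequence $\{R_k\}$ is increasing we have $R_k \geq 1$. Thus $R_k^{2^N} \geq R_k^{2}$ and $2^N \geq 4$, so
\[
R_{k+1} \;\geq\; 2^N R_k^{2^N} \;\geq\; 4 R_k^{2}.
\]

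For the second inequality $R_{k+1} \geq (2R)^{2^{kN}}$, I would proceed by induction on $k$. The base case $k=1$ is handled by extracting the sharper calculation from the proof of Lemma \ref{Rk}: since $n_0 = 2^{N-1}$ and $n_1 = 2^N$, we have $2^{N-1} + n_0 = 2^N$, so the estimate there reads $R_2 \geq 2^{n_1} R_1^{2^N}$. Using $R_1 = 2R$ and discarding the factor $2^{n_1} \geq 1$ yields $R_2 \geq (2R)^{2^N}$, which is the base case. For the inductive step, assuming $R_k \geq (2R)^{2^{(k-1)N}}$, Lemma \ref{Rk} gives
\[
R_{k+1} \;\geq\; 2^N R_k^{2^N} \;\geq\; R_k^{2^N} \;\geq\; (2R)^{2^{(k-1)N}\cdot 2^N} \;=\; (2R)^{2^{kN}},
\]
closing the induction.

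There is no real obstacle here: both statements are essentially definitional consequences of Lemma \ref{Rk}, recorded separately because in later sections one typically needs only the doubly-exponential-in-$k$ lower bound $(2R)^{2^{kN}}$ rather than the more precise tower of constants that appears in Lemma \ref{Rk}. The only mild care needed is to notice that the base case of the induction requires tracking the exponent $2^{N-1}+n_0$ from the proof of Lemma \ref{Rk} rather than using the stated weaker form $R_{k+1}\geq 2^N R_k^{2^N}$ applied at $k=1$ (which would instead yield $R_2 \geq 2^N R_1^{2^N} = 2^N(2R)^{2^N}$, also sufficient); either route works.
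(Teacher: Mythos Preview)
Your proof is correct and follows essentially the same route as the paper: the first inequality is just a weakening of the bound $R_{k+1}\geq 2^N R_k^{2^N}$ from Lemma~\ref{Rk}, and the second is obtained by induction from that same bound together with $R_1=2R$. The paper's inductive step is exactly your computation $R_{k+1}\geq R_k^{2^N}\geq (2R)^{2^{(k-1)N}\cdot 2^N}=(2R)^{2^{kN}}$.
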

\begin{proof}
	The first inequality is a weaker version than the one found in Lemma $\ref{Rk}$. The second inequality follows from Lemma $\ref{Rk}$ by induction and the fact that $R_1 = 2R$. Indeed, the base case is clear by Lemma $\ref{Rk}$, and if the claim is true for $R_k$, then 
	$$R_{k+1} \geq (R_k)^{2^N} \geq (2R)^{2^N \cdot 2^{{N(k-1)}}} \geq (2R)^{2^{N(k-1)+N}}.$$
\end{proof}
Now we show that the infinite product we are interested in converges on $\bC$. 
\begin{corollary}
	The infinite product
	$$f(z) = \lim_{k \ra \infty} f_k(z) = \prod_{k=0}^{\infty} F_k(z)$$
	converges uniformly on compact subsets of $\bC$. In particular, $f(z)$ is a transcendental entire function. 
\end{corollary}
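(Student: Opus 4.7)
The plan is to apply the standard convergence test for infinite products. Writing each factor as $F_k(z) = 1 + a_k(z)$ with $a_k(z) = -\tfrac{1}{2}(z/R_k)^{n_k}$, it suffices to check that $\sum_k |a_k(z)|$ converges uniformly on compact subsets of $\bC$; this guarantees that $\prod_k F_k(z)$ converges uniformly on compacta to an entire function, and multiplying by the polynomial $f_0^N(z)$ preserves both the uniform convergence and the entireness of the limit.

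To verify uniform summability on a compact $K \subset \{|z| \leq M\}$, I would invoke Corollary \ref{Rk2}, which gives $R_k \geq (2R)^{2^{(k-1)N}}$. This super-exponential growth lets me pick $k_0 = k_0(M)$ large enough that $R_k \geq 2M$ for every $k \geq k_0$. For such $k$ and any $z \in K$ we then have the crude bound
$$|a_k(z)| \;\leq\; \tfrac{1}{2} |z/R_k|^{n_k} \;\leq\; 2^{-n_k - 1}.$$
Since $n_k = 2^{N+k-1}$ grows geometrically, the tail $\sum_{k \geq k_0} 2^{-n_k - 1}$ is not only finite but converges extremely rapidly, and it majorizes $\sum_{k \geq k_0} |a_k(z)|$ uniformly on $K$. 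This yields the desired uniform convergence on every compact set, hence $f$ is an entire function.

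To promote this to \emph{transcendental}, I would just count zeros: the factor $F_k$ vanishes precisely on the $n_k$-point set $\{R_k \cdot 2^{-1/n_k} \zeta : \zeta^{n_k} = 1\}$, lying on the circle $|z| = R_k \cdot 2^{-1/n_k}$. Since $R_k \to \infty$, the zero set of $f$ is unbounded and hence infinite, which rules out $f$ being a polynomial.

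I do not anticipate any genuine obstacle here: the real work—the super-exponential growth of $\{R_k\}$ relative to the exponents $n_k$—has already been carried out in Lemma \ref{Rk} and Corollary \ref{Rk2}. Once those bounds are in hand, uniform convergence of the partial products reduces to a one-line geometric comparison, and the transcendence follows from the elementary observation that the zero set escapes to infinity.
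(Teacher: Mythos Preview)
Your proposal is correct and follows essentially the same route as the paper: both reduce convergence of the product to uniform summability of $|1-F_k(z)|$ on a ball, pick an index past which $R_k$ dominates the radius, and bound the tail by $O(2^{-n_k})$. The only differences are cosmetic---you justify transcendence by exhibiting infinitely many zeros (the paper simply asserts it), and your zero locus should read $|z| = R_k\cdot 2^{1/n_k}$ rather than $2^{-1/n_k}$, though this typo is harmless.
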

\begin{proof}
	To check that the infinite product converges, we check that the associated sum
	$$\sum_{k=0}^{\infty} | 1 - F_k(z) |$$
	converges uniformly on compact sets. It suffices to show that $f$ converges uniformly on every closed ball $\{|z| \leq s\}$. If we choose $j$ so that $2s < R_j$,  then we know that for all $k > j$ 
	$$|1-F_k(z)| =\frac{1}{2}\left| \frac{z}{R_k} \right|^{n_k}  \leq \frac{s^{n_k}}{R_k^{n_k}} \leq \left(\frac{R_j}{2R_k}\right)^{n_k} =  O(2^{-n_k}) = O(2^{-2^{k}}).$$
	In this way, the series above is summable, so the series converges uniformly on  $\{|z| \leq s\}$. So $f$ defines an entire function, and it is certainly not a polynomial.
\end{proof}

We conclude this section by recording two useful lemmas regarding the growth rate of $R$. The first lemma will help us study $f$ near $|z| = R_k$ in section $7$. The proof follows from Theorem \ref{Rk}, and we refer the reader to sections $6$ and $8$ of \cite{CB1} for the details of the proof.

\begin{lemma}
	Suppose that $\{R_k\}$ has been defined as in this section, and $m \geq 1$. Then 
	\begin{eqnarray}
	\prod_{j=1}^{k-1} \left(1 + \left(\frac{R_j}{R_k} \right)^m \right) = 1 + O(R_k^{-1/2}).
	\end{eqnarray}
	\begin{eqnarray}
	\prod_{j=k+1}^{\infty}\left(1 + \frac{R_k}{R_j} \right)  = 1 + O(R_k^{-1}).
	\end{eqnarray}
	Finally, if $|z| \leq 4R_k$, we have
	\begin{eqnarray}
	\prod_{j=k+1}^{\infty} F_j(z) = 1 + O(R_k^{-1}).
	\end{eqnarray}
	\label{bigoh}
\end{lemma}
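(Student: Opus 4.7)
The plan is to convert each infinite product into a sum via approximate logarithms and then use the superexponential growth of $\{R_k\}$ from Lemma \ref{Rk} and Corollary \ref{Rk2} to show each tail is small. The product-to-sum bound I would use throughout is: if the $|a_j|$ are bounded and $S := \sum_j |a_j|$ is small, then
\[
\left| \prod_j (1 + a_j) - 1 \right| \leq \exp(S) - 1 \leq 2 S,
\]
so each of the three assertions reduces to estimating a tail sum of the form $\sum |a_j|$ and showing it has the stated order.

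For the first estimate, the key input is $R_k \geq 4 R_{k-1}^2$, which rearranges to $R_{k-1}/R_k \leq 1/(2\sqrt{R_k})$. Iterating this yields $R_{k-j}/R_k \leq C R_k^{-(1 - 2^{-j})}$ for some constant $C$, so the ratios $R_j/R_k$ decay double-exponentially as $j$ moves away from $k$. For $m \geq 1$ the dominant term of the sum $\sum_{j=1}^{k-1} (R_j/R_k)^m$ is therefore the $j = k-1$ term, giving
\[
\sum_{j=1}^{k-1} (R_j/R_k)^m \leq (R_{k-1}/R_k)^m + (\text{rapidly shrinking tail}) = O(R_k^{-m/2}) = O(R_k^{-1/2}).
\]
The second estimate is completely analogous, applied in the opposite direction: from $R_{j+1} \geq 4 R_j^2$ one obtains $R_k/R_{k+1} \leq 1/(4R_k)$ with super-geometrically smaller subsequent terms, so $\sum_{j=k+1}^\infty R_k/R_j = O(R_k^{-1})$.

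For the third estimate I would bound $|1 - F_j(z)| = \tfrac{1}{2}|z/R_j|^{n_j} \leq \tfrac{1}{2}(4 R_k / R_j)^{n_j}$ uniformly for $|z| \leq 4 R_k$. Lemma \ref{Rk} supplies $R_{k+1} \geq 2^N R_k^{2^N}$ with $N \geq 10$, so $4 R_k / R_{k+1} \leq R_k^{-(2^N - 1)}$ up to a harmless constant, and raising to the power $n_{k+1} \geq 2^N$ makes the $j = k+1$ term vastly smaller than $R_k^{-1}$, provided $R$ was chosen sufficiently large at the outset. Terms with $j \geq k+2$ are dwarfed by the $j = k+1$ term by the same argument applied recursively, since both $R_j$ and $n_j$ only accelerate the decay. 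Summing and applying the product-to-sum bound above then yields $\prod_{j=k+1}^\infty F_j(z) = 1 + O(R_k^{-1})$, uniformly in $z$ with $|z| \leq 4 R_k$.

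The only genuine obstacle is the joint bookkeeping in the third estimate: one must simultaneously track the superexponentially growing $R_j$ and the doubling exponents $n_j$, and verify that the bounds are uniform in $z$ and $k$. Because $n_j$ only accelerates the decay and no delicate cancellation is required, this is routine manipulation of superexponentially growing sequences, which is presumably why the author defers the details to the analogous computations in sections 6 and 8 of \cite{CB1}.
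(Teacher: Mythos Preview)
Your proposal is correct and follows exactly the approach one expects here: convert each product to a sum via $|\prod(1+a_j)-1|\le e^{\sum|a_j|}-1$, then control the tail sums using the superexponential growth $R_{k+1}\ge 4R_k^2$ (and the sharper bound from Lemma~\ref{Rk} for the third estimate). The paper itself gives no proof and simply refers to the analogous computations in \cite{CB1}, so your argument is precisely the intended one.
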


This second lemma is used in the proof of Lemma 11.1
\begin{lemma}
	Let $N_k = n_1 \cdots n_k$, and let $\alpha >0$. Then for any $R >1$, 
	$$\sum_{k=1}^{\infty} 2^k N_k R_k^{-\alpha} < \infty$$
 	Moreover, by choosing $R$ sufficiently large, the sum can be made arbitrarily small.
	\label{giantsum}
\end{lemma}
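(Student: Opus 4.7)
The plan is to exploit the mismatch between the singly exponential growth of $2^k N_k$ and the doubly exponential growth of $R_k$ guaranteed by Corollary \ref{Rk2}.

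I would first write $N_k$ in closed form. Since $n_j = 2^{N+j-1}$, summing exponents gives $N_k = 2^{kN + k(k-1)/2}$, hence $2^k N_k = 2^{O(k^2)}$. By contrast, Corollary \ref{Rk2} yields
$$R_k^{\alpha} \;\geq\; (2R)^{\alpha \cdot 2^{(k-1)N}},$$
which is doubly exponential in $k$. Taking $\log_2$, the logarithm of the $k$-th term in the sum is bounded above by
$$k(1+N) + \tfrac{1}{2} k(k-1) - \alpha \, 2^{(k-1)N} \log_2(2R).$$
For any $R > 1$ the negative term overwhelms the positive ones as soon as $k$ is moderately large, so the terms decay faster than any geometric series. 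Convergence of the sum is immediate.

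For the ``moreover'' assertion I would separate the $k = 1$ term from the tail. The $k = 1$ term equals $2^{N+1}(2R)^{-\alpha}$, which plainly tends to $0$ as $R \to \infty$. For $k \geq 2$, the bound above shows that as soon as $R$ is large enough to guarantee $\alpha \log_2(2R) \geq 4(1+N)$, each term is dominated by $(2R)^{-2^{(k-1)N}}$. Summing this doubly-geometric tail yields a contribution of size at most $O(R^{-2^{N}})$, and combining with the $k = 1$ estimate one obtains that the full sum is $O(R^{-\alpha})$, which tends to $0$ as $R \to \infty$.

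I do not anticipate any genuine obstacle: the whole argument is an elementary order-of-magnitude comparison between a singly exponential sequence (the numerators) and a doubly exponential one (the denominators). The only mild care required is to pick a threshold value of $R$ that makes the term-by-term estimate uniform in $k$, which is routine bookkeeping of the same flavor as the proof of Corollary \ref{Rk2}.
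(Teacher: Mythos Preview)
The paper does not actually prove this lemma; it is stated without argument at the end of Section~5, evidently because it is routine once the growth estimates of Lemma~\ref{Rk} and Corollary~\ref{Rk2} are in hand. Your approach---compute $N_k = 2^{kN+k(k-1)/2}$ explicitly, observe that $2^kN_k = 2^{O(k^2)}$ is only singly exponential while $R_k^{\alpha}\geq (2R)^{\alpha\,2^{(k-1)N}}$ is doubly exponential---is precisely the intended one and is correct.

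One minor imprecision: the tail bound ``each term is dominated by $(2R)^{-2^{(k-1)N}}$'' under the hypothesis $\alpha\log_2(2R)\geq 4(1+N)$ does not quite hold when $\alpha<1$, since the exponent you are claiming exceeds $\alpha\,2^{(k-1)N}\log_2(2R)$ in that range. The clean fix is to first check (using $N\geq 10$) that $k(1+N)+\tfrac12 k(k-1)\leq 2^{(k-1)N}$ for all $k\geq 2$, which gives
\[
2^kN_kR_k^{-\alpha}\;\leq\;\Bigl(\tfrac{2}{(2R)^{\alpha}}\Bigr)^{2^{(k-1)N}}.
\]
Summing this doubly-geometric tail over $k\geq 2$ yields $O\bigl((2R)^{-\alpha\,2^{N}}\bigr)$, and together with the $k=1$ term $2^{N+1}(2R)^{-\alpha}$ you recover the conclusion that the whole sum is $O\bigl((2R)^{-\alpha}\bigr)\to 0$ as $R\to\infty$, exactly as you claim.
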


\section{The Hausdorff Dimension Changes by a Small Amount}
Recall that we denote $f_0(z) = z^2 +c$. Since the Julia sets of $f_0$ and $f_0^N$ are the same, they have the same Hausdorff dimension. We can also view the function $f$ as $f_0^N$ perturbed by the infinite product we constructed in the previous section. The parameters $R$ have been chosen so that in a neighborhood around the origin the infinite product defining $f$ is uniformly close to $1$. To emphasize this, we write
$$f(z) = f_0^N(z) \cdot (1+ \epsilon(z))$$
where $1 + \epsilon(z)$ converges locally uniformly to the constant function $1$ as $R \ra \infty$.

In this section, we will view $f$ and $f_0^N$ as polynomial-like mappings. Recall that we denote the filled Julia set and Julia set of a polynomial-like mapping $f$ by $K_f$ and $J_f$, respectively. Since $f$ is an entire function, $J_f$ is a subset of the Julia set of $f$ viewed as an entire function. Since $f_0^N$ is a polynomial, the filled Julia set $K_{f_0^N}$ and the Julia set $J_{f_0^N}$ coincide with its usual filled Julia set and Julia set. We will denote the basin of attraction of $f_0^N$ by $B_{f^N_0}$, so that $K_{f^N_0} = J_{f_0^N} \cup B_{f^N_0}$. We will use similar notation for $f$. Recall that we chose $c$ so that $\Hdim(J_{f_0^N}) = s$. Our goal is to show that since $f$ and $f_0^N$ differ only by a perturbation close to the identity, $\Hdim(J_f) = t >1$ where $|s-t|$ is as small as we would like. It will follow that the Julia set of the entire function $f$, $\cJ(f)$, is at least $t$.  

The following lemma is obvious.
\begin{lemma}
	\label{polynomiallike}
Fix the parameter $N$ and fix some $r > 10$. Then for all $R$ sufficiently large, $V = f(B(0,r))$ is compactly contained in $B(0,R/4)$, and 
$$f: B(0,r) \ra V$$
is a degree $2^N$ polynomial like mapping.  
\end{lemma}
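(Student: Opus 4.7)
The plan is to exploit the fact that $f = f_0^N \cdot (1 + \epsilon)$ is a small perturbation of the polynomial $f_0^N$ on the fixed disk $B(0, r)$, and to deduce the polynomial-like structure of $f$ from that of $f_0^N$.

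First I would verify that $\prod_{k \geq 1} F_k(z) \to 1$ uniformly on $\bar B(0, r)$ as $R \to \infty$, by estimates completely analogous to (and simpler than) Lemma \ref{bigoh}: for $|z| \leq r \ll R_k$, each factor satisfies $|1 - F_k(z)| = \tfrac{1}{2}(|z|/R_k)^{n_k} \leq (r/R_k)^{n_k}$, and the tail sum $\sum_k (r/R_k)^{n_k}$ decays rapidly as $R \to \infty$ since $R_k \geq R_1 = 2R$. Hence $\delta := \|\epsilon\|_{\bar B(0,r)}$ can be made as small as we like by taking $R$ large.

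Next I would verify that $f_0^N$ itself is polynomial-like of degree $2^N$ on $B(0, r)$. The two ingredients are: (a) for $c$ in the main cardioid, the filled Julia set $K_{f_0}$ is a quasidisk of bounded diameter, so all $2^N - 1$ critical points of $f_0^N$ (which are preimages of $0$ under iterates of $f_0$, hence lie in $K_{f_0}$) sit well inside $B(0, r)$; and (b) for $r > 10$ and $|c| < 1$, one has $|f_0(z)| \geq r^2 - |c| > r$ on $\partial B(0,r)$, and iterating yields $|f_0^N(z)| \gg r$ there. A standard argument-principle calculation then shows $f_0^N : B(0,r) \to V_0 := f_0^N(B(0,r))$ is a proper holomorphic map of degree $2^N$, $V_0$ is a topological disk, and $\bar B(0,r)$ is compactly contained in $V_0$. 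Moreover, $V_0 \subset B(0, M_r)$ where $M_r := \max_{|z| \leq r}|f_0^N(z)|$ is a finite constant depending only on $f_0$ and $r$.

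Finally I would apply Rouch\'e to transfer the polynomial-like structure to $f$: on $\partial B(0, r)$, $|f(z) - f_0^N(z)| = |f_0^N(z)||\epsilon(z)| \leq M_r \delta$, while for any $|w| \leq r$ the quantity $|f_0^N(z) - w| \geq \min_{\partial B(0,r)} |f_0^N| - r$ is uniformly positive by step two. Taking $R$ large enough that $\delta$ is much smaller than this gap, Rouch\'e implies that $f(z) - w$ and $f_0^N(z) - w$ have the same number ($2^N$) of zeros in $B(0, r)$. This gives $\bar B(0, r) \subset V := f(B(0, r))$ with compact containment and each point having $2^N$ preimages; properness and degree $2^N$ follow, and $V$ is a topological disk as the image of a disk under a proper branched cover. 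For the outer containment, $|f| \leq 2M_r$ on $\bar B(0, r)$ forces $V \subset B(0, 2M_r)$, so taking $R > 8 M_r$ gives $V \subset\subset B(0, R/4)$. The main obstacle is really the middle step: cleanly packaging the polynomial-like structure for the fixed polynomial $f_0^N$ --- verifying that $V_0$ is genuinely a topological disk compactly containing $\bar B(0,r)$ --- into a form robust enough to perturb. Once that is in hand, the remaining two steps are essentially routine estimates.
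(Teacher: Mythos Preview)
The paper offers no proof of this lemma at all --- it merely declares it ``obvious'' --- so there is no approach to compare against. Your argument via uniform convergence of the infinite product plus Rouch\'e is the natural way to fill in the details, and your first and third steps are correct.

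There is, however, a genuine issue exactly where you flag the ``main obstacle.'' You assert that ``properness and degree $2^N$ follow'' and then that $V$ is a topological disk ``as the image of a disk under a proper branched cover,'' but properness of $f_0^N$ (and hence of $f$) onto its \emph{exact} image actually fails for $N\geq 2$ when $c\neq 0$. Concretely, for $N=2$ the four $f_0^2$-preimages of $w_0=f_0^2(z_0)$ with $|z_0|=r$ are $\pm z_0$ and $\pm z_1$ where $|z_1|^2=|z_0^2+2c|$; for suitable $\arg z_0$ this is strictly less than $r^2$, so $z_1\in B(0,r)$ and $w_0$ lies in the open image $f_0^2(B(0,r))$. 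Thus boundary points can map into the interior of the image, and $B(0,r)\to f(B(0,r))$ is not proper. This is really an imprecision in the paper's statement rather than a flaw in your strategy. The standard remedy --- which your Rouch\'e estimate already sets up --- is to take the codomain to be a round disk $V'=B(0,\rho)$ with $r<\rho<\min_{|z|=r}|f(z)|$ and the domain $U'$ to be the component of $f^{-1}(V')$ containing $0$; then $f:U'\to V'$ is automatically proper of degree $2^N$, and this is all that is used downstream.
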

In Lemma $\ref{polynomiallike}$, larger choices of both $N$ and $r$ will require larger choices of $R$. We will make a definitive choice for $N$ at the end of Section $8$. Once this is done, we can make a definitive choice for $r$, and then we will always assume that $R$ is large enough so that the conclusions of Lemma $\ref{polynomiallike}$ hold. 

Next observe that
\begin{eqnarray}
\label{CloseDerivative}
|f'(z)| \geq |(f_0^N)'(z)| (1 + \epsilon(z))| - |\epsilon'(z)| |f_0^N(z)|.
\end{eqnarray}
On $B(0,r)$, we may choose $R$ sufficiently large so that $(1+\epsilon(z))$ is uniformly close to the constant function $1$. It follows that the first term on the right hand side of $(\ref{CloseDerivative})$ can be made arbitrarily close to $|(f_0^N)'(z)|$ by choosing $R$ large. $|f_0^N(z)|$ is bounded on $B(0,r)$, and the Cauchy estimates imply that if $z \in B(0,r)$, then
$$ |\epsilon'(z)| \leq \frac{\max_{w \in B(z,r)}|\epsilon(z)|}{r}.$$
Therefore, 
\begin{eqnarray}
\label{Secondterm}
\max_{z \in B(0,r)} |\epsilon'(z)| \leq \frac{\max_{w \in B(0,2r)}|\epsilon(z)|}{10}.
\end{eqnarray}
By perhaps making a larger choice of $R$, we can make $(\ref{Secondterm})$ as small as we would like. We summarize this discussion below.

\begin{lemma}
	\label{SimHyper}
Fix $N$ and $r > 10$. Let $\delta > 0$ be given. Then for all $R$ sufficiently large, 
$$\sup_{z \in B(0,r)}|f'(z) - (f^N_0)'(z)| < \delta.$$
\end{lemma}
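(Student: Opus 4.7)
The plan is to expand $f' - (f_0^N)'$ via the product rule and bound each piece using estimates already developed in the paper. Since $f(z) = f_0^N(z)(1 + \epsilon(z))$ with $\epsilon(z) = \prod_{k=1}^{\infty} F_k(z) - 1$, differentiation gives
\begin{equation*}
f'(z) - (f_0^N)'(z) = (f_0^N)'(z)\,\epsilon(z) + f_0^N(z)\,\epsilon'(z).
\end{equation*}
The functions $f_0^N$ and $(f_0^N)'$ depend only on $N$ (which is fixed), and $N$ is independent of $R$, so on $B(0,r)$ both are bounded by some constant $M = M(N,r)$. Hence it suffices to show that $\epsilon$ and $\epsilon'$ can be made uniformly small on $B(0,r)$ by choosing $R$ large.

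First I would show that $\epsilon \to 0$ uniformly on $B(0,2r)$ as $R \to \infty$. Assuming $R$ is large enough that $R_1 = 2R > 4r$, Lemma \ref{Rk} forces $R_k \to \infty$ with $R$ for every $k$, and the computation in the Corollary establishing the convergence of the infinite product gives
\begin{equation*}
|1 - F_k(z)| \;=\; \tfrac{1}{2}\bigl|z/R_k\bigr|^{n_k} \;\leq\; \tfrac{1}{2}(2r/R_k)^{n_k}
\end{equation*}
for $|z|\leq 2r$. Since $R_k \geq R_1 = 2R$ for all $k$ and $n_k \geq 2^N$, these terms are dominated by a geometric tail that tends to $0$ as $R \to \infty$, and by the standard estimate $|\prod(1+a_k) - 1| \leq \exp(\sum |a_k|) - 1$ (or direct telescoping), this yields $\sup_{B(0,2r)} |\epsilon(z)| \to 0$.

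Next, the Cauchy estimate applied to $\epsilon$ on $B(z,r) \subset B(0,2r)$ for $z \in B(0,r)$ gives
\begin{equation*}
|\epsilon'(z)| \;\leq\; \frac{\sup_{B(0,2r)}|\epsilon|}{r},
\end{equation*}
so $\epsilon'$ also tends to $0$ uniformly on $B(0,r)$. Combining these with the bound $M$ on $|f_0^N|$ and $|(f_0^N)'|$, we obtain
\begin{equation*}
\sup_{z \in B(0,r)} |f'(z) - (f_0^N)'(z)| \;\leq\; M\bigl(\sup_{B(0,r)}|\epsilon| + \sup_{B(0,r)}|\epsilon'|\bigr),
\end{equation*}
which can be made smaller than any prescribed $\delta$ by taking $R$ large.

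There is no serious obstacle here; the statement is essentially a quantitative repackaging of the discussion in equations \eqref{CloseDerivative}--\eqref{Secondterm}. The one item requiring mild care is verifying that $\epsilon \to 0$ uniformly on a slightly larger disk $B(0,2r)$ (not just $B(0,r)$) so the Cauchy estimate is available; this is immediate from the fact that the domain of convergence of the infinite product is all of $\bC$, with the tail estimate improving as $R$ grows.
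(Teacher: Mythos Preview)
Your proof is correct and follows essentially the same approach as the paper: the lemma is stated as a summary of the discussion surrounding equations \eqref{CloseDerivative}--\eqref{Secondterm}, which uses the product-rule expansion, the boundedness of $f_0^N$ and $(f_0^N)'$ on $B(0,r)$, uniform smallness of $\epsilon$ on a slightly larger disk, and the Cauchy estimate to control $\epsilon'$. If anything, your version is more cleanly stated, since the paper's inequality \eqref{CloseDerivative} is written as a one-sided lower bound on $|f'(z)|$ rather than directly bounding the difference $|f'-(f_0^N)'|$.
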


We now prove the main theorem of this section.

\begin{theorem}
	Let $s = \Hdim(\cJ(f_0))$ and $\epsilon > 0$ be given. Then $R$ and $r$ may be chosen so that the Julia set of the polynomial-like mapping $f$ is a quasicircle such that $|\Hdim(J_f) - s| < \epsilon.$ 
\end{theorem}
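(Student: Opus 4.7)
The plan is to construct a quasiconformal conjugacy $\varphi$ between the polynomial-like mappings $f_0^N$ and $f$ whose dilatation tends to zero as $R\to\infty$, and then invoke the standard fact that $K$-quasiconformal homeomorphisms preserve Hausdorff dimension up to an error that vanishes as $K\to 1$. Because quasiconformal mappings send quasicircles to quasicircles, the same conjugacy gives the quasicircle claim for $J_f$ for free.

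First I would observe that $f_0^N$ is a hyperbolic polynomial-like map: since $c$ lies in the main cardioid, $f_0$ has an attracting fixed point $p_0$ whose immediate basin contains the only finite critical point $0$, and $f_0^N$ is uniformly expanding in a conformal metric on a neighborhood of $J_{f_0^N}$. Lemma \ref{SimHyper} lets me choose $R$ so that $\sup_{B(0,r)} |f' - (f_0^N)'|$ is as small as I like; since hyperbolicity is $C^1$-open among polynomial-like maps, $f:B(0,r)\to V$ is itself a hyperbolic polynomial-like map of degree $2^N$ for $R$ sufficiently large. In particular $f$ has an attracting fixed point $p$ close to $p_0$ whose immediate basin contains the unique critical point of $f$ in $B(0,r)$.

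Next I build $\varphi$ by the standard pull-back construction. Choose a small closed topological disk $D_0\ni p_0$ with $f_0^N(\overline{D_0})\subset \mathrm{int}(D_0)$, and a nearby disk $D\ni p$ with $f(\overline D)\subset \mathrm{int}(D)$; their existence and closeness follow from $C^1$-closeness of the two maps near their attracting fixed points. On the fundamental annulus $A_0 = D_0\setminus \mathrm{int}(f_0^N(D_0))$ I pick a smooth diffeomorphism $\psi:A_0\to A := D\setminus \mathrm{int}(f(D))$ that matches the boundary identifications induced by $f_0^N$ and $f$. Because the two maps are $C^1$-close, $\psi$ can be chosen with dilatation $\|\mu_\psi\|_\infty = O(\delta)$ where $\delta$ is the quantity in Lemma \ref{SimHyper}. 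I then extend $\psi$ to the full basin by lifting through the dynamics: on each component of $f_0^{-N}(A_0)\cap B(0,r)$ define $\varphi$ as the unique lift satisfying $f\circ\varphi = \varphi\circ f_0^N$. Because $f$ and $f_0^N$ are holomorphic on the branches of inversion, pulling back preserves the Beltrami coefficient, so each lift remains $K$-quasiconformal with the same $K=(1+\|\mu_\psi\|_\infty)/(1-\|\mu_\psi\|_\infty)$. Hyperbolicity forces the diameters of the pullback pieces to decay exponentially, so the successive extensions converge uniformly on compacts; Lemma \ref{GAL} ensures the limit $\varphi$ is $K$-quasiconformal on a neighborhood of $K_{f_0^N}$, conjugates $f_0^N$ to $f$, and carries $J_{f_0^N}$ homeomorphically onto $J_f$. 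Since $J_{f_0^N}$ is a quasicircle, so is $J_f$.

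Finally, the classical quasiconformal dimension distortion estimate gives $\Hdim(\varphi(E))\to \Hdim(E)$ as $K\to 1$ uniformly for compact $E$. By enlarging $R$, the estimate on $\|\mu_\psi\|_\infty$ from Lemma \ref{SimHyper} forces $K$ arbitrarily close to $1$, which yields $|\Hdim(J_f) - s|<\epsilon$ as required. The main obstacle I anticipate is the first step of the pullback: verifying that the comparison diffeomorphism $\psi$ on the fundamental annulus can genuinely be constructed with dilatation controlled by the $C^1$-perturbation size $\delta$ while respecting the boundary identifications on both sides; once this bookkeeping is done, the rest of the argument is essentially a standard structural-stability pullback in the polynomial-like category.
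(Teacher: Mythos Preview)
Your overall strategy---build a quasiconformal conjugacy between $f_0^N$ and $f$ with dilatation tending to zero as $R\to\infty$, then invoke the H\"older distortion bound for dimension---is exactly the paper's strategy, and the use of Lemma~\ref{GAL} for the limit is the same. The difference is \emph{where} you place the initial fundamental domain for the pull-back, and that difference creates a real gap.

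The paper takes $A$ to be a thin annular neighborhood of $J_{f_0^N}$ on which $|(f_0^N)'|>1$; by hyperbolicity this region contains \emph{no critical points} of either $f_0^N$ or $f$. Consequently each restriction $f_0^N:A_n\to A_{n-1}$ and $f:B_n\to B_{n-1}$ is an unbranched $2^N$-fold covering, so the lifts $\varphi_n$ exist uniquely and inherit the Beltrami bound automatically. The pull-backs move \emph{toward} the Julia set and never meet a critical point.

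Your construction starts from a fundamental annulus $D_0\setminus f_0^N(D_0)$ deep in the attracting basin and pulls back \emph{outward}. After finitely many steps the pull-back annulus $(f_0^N)^{-k}(A_0)$ contains the critical point $0$, and then $f_0^N$ and $f$ are \emph{branched} over the next annulus. A lift of $\varphi\circ f_0^N$ through $f$ exists as a homeomorphism only if the previously built $\varphi$ already sends the critical value of $f_0^N$ to the critical value of $f$; an arbitrary smooth $\psi$ on $A_0$ with small dilatation has no reason to arrange this. So the sentence ``define $\varphi$ as the unique lift \ldots\ because $f$ and $f_0^N$ are holomorphic on the branches of inversion'' fails precisely at the step where the branch point is crossed. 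The repair is either to move the fundamental domain to the repelling side (as the paper does), or to impose from the start that $\psi$ respects the full postcritical orbit---which is additional bookkeeping you have not done. A secondary imprecision: your limit $\varphi$ is only defined on the basin, not on a two-sided neighborhood of $K_{f_0^N}$ as you claim; this is harmless for the dimension estimate (H\"older up to the boundary of a quasidisk suffices) but should be stated correctly.
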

\begin{proof}
	 $f_0^N$ is a hyperbolic polynomial since $c$ is in the main cardioid. It follows that there exists some topological annulus $A$ containing the Julia set of $f_0^N$ so that $|(f_0^N)'(z)| \geq \alpha > 1$ on $A$. By Lemma $\ref{SimHyper}$, we may arrange for $|f'(z)| \geq \alpha' > 1$ on $A$ as well. Since the dynamics are expanding, $A$ is relatively compact in $f^N_0(A)$ and $f(A)$, which are also topological annuli containing $A$.
	
	Since $f$ is uniformly close to $f^N_{0}$, the boundary of $f(A)$ and $f^N_0(A)$ are close to each other in the following sense. Let $z \in f(A)$ and let $D$ be a small disk around $z$. Then the outward and inward pointing normal vector $\textbf{n}_z$ based at $z$ makes small angles with any other inward or outward pointing normal vector $\textbf{n}_w$ based at $w$ in both $D$ and the boundary of $f^N_0(A)$. It follows that there exists a quasiconformal mapping
	$$\varphi_0: \overline{f_0(A) \setminus A} \ra \overline{f(A) \setminus A}.$$
	Moreover, $\varphi_0$ can be chosen to have the following properties:
	\begin{enumerate}
		\item $\varphi_0 = \mbox{id}$ on $\pa A$.
		\item $\varphi_0$ is $(1+\delta)$-quasiconformal, where $\delta \ra 0$ and $R \ra \infty$. 
		\item $f(\varphi_0(z)) = \varphi_0(f_{0}^N(z))$ on $\pa A$.
	\end{enumerate} 
	
	We now set up the following notation. We call $A_n = (f^N_0)^{-n}(A)$, and we call $B_n = f^{-n}(A)$. Thus $A_n$ and $B_n$ each form nested sequences of topological annuli. since $f_0^N$ and $f$ are both expanding on $A$, we know that the widths of $A_n$ and $B_n$ are both shrinking uniformly.
	
	Next, notice that $U_1 = A \setminus A_1$ and $V_1 = A \setminus B_1$ are each the disjoint union of topological annuli. We call the components the outer and inner annuli, corresponding to which component has larger diameter. We denote these annuli by $U^1_o$ and $V^1_o$ for the outer annuli, and $U^1_i$ and $V^1_i$ for the inner annuli. We can continue this procedure inductively, obtaining a sequence annuli $U_n = A_n \setminus A_{n+1} = U^n_o \cup U^n_i$ and $V_n = B_n \setminus B_{n+1} = V^n_o \cup V^n_i$. These annuli have the property that the outer boundary of $U^n_o$ coincides with the inner boundary of $U^{n-1}_o$ for all $n$, and the inner annuli have the property that the outer boundary of $U^n_i$ coincides with the inner boundary of $U^{n+1}_i$. Similar statements are true for $V^n_o$ and $V^n_i$. Our goal is, for each $n$, to construct a quasiconformal $\varphi_n: f_0(A) \ra f(A)$ satisfying 
	\begin{enumerate}
		\item $\varphi_n$ is conformal from $A_n$ to $B_n$,
		\item $f(\varphi_n(z)) = \varphi_n(f_0^N(z))$ on the components $U^k_i$, $U^k_0$, $V^k_i$, $V^k_0$ for $k =1,\dots, n$. 
		\item $\varphi_{n-1} = \varphi_n$ on the components $U^k_i$, $U^k_0$, $V^k_i$, $V^k_0$ for $k =1,\dots, n-1$. 
		\item $\varphi_n$ is $(1+\delta)$-quasiconformal, with the same $\delta$ as above. 
	\end{enumerate}
	
	Let us first construct $\varphi_1$. Note that $\varphi_0 \circ f^N_0: U_1 \ra f(A) \setminus A$ and $f: V_1 \ra f(A) \setminus A$ are each $2^N$ to $1$ continuous mappings, hence there exists a lift $\varphi_1: U_1 \ra V_1$ that satisfies 
	$$ \varphi_0 \circ f^N_0 = f \circ \varphi_1.$$
	Since $f$ and $f_0^N$ are are $2^N$ to $1$ and locally conformal mappings, $\varphi_1$ is $(1+\delta)$-quasiconformal. Next, since $\mbox{id} \circ f^N_0 : A_1 \ra A$ and $f: B_1 \ra A$ are $2^N$ to $1$, there is a lift $\Phi: A_1 \ra B_1$ so that 
	$$\mbox{id} \circ f^N_0 = f \circ \Phi.$$
	Note that since each of $f$ and $f^N_0$ are locally conformal and each $2^N-1$, $\Phi$ must be a conformal mapping. We define
	$$\varphi_1 = \begin{cases}
	\varphi_0(z) & z \in f_0(A) \setminus A, \\
	\varphi_1(z) & z \in U_1, \\
	\Phi(z) & z \in A_1. 
	\end{cases}$$
	We should check that $\varphi_1$ is continuous. Indeed, if $z$ is on the boundary of $U_1$ and $f_0(A) \setminus A$ then 
	$f(\varphi_1(z)) = \varphi_0(f_0(z)) = f(\varphi_0(z))$ by the construction of $\varphi_0$. So we choose the lift $\varphi_1$ (there are $2^N$ possible choices) so that $\varphi_1 = \varphi_0$ on this common boundary. If $z$ is on the boundary of $U_1$ and $A_1$, similarly we argue that 
	$$f( \Phi(z)) = f_0(z) = \varphi_0(f_0(z)) = f(\varphi_1(z)).$$
	So we choose the lift so that $\Phi(z) = \varphi_1(z)$ on their common boundary of definition. 
	
	We may continue this procedure inductively, obtaining lifts $\varphi_n: U_n \ra V_n$ and $\Phi_n: A_n \ra B_n$ that satisfy
	$$f(\varphi_n(z)) = \varphi_{n-1}(f_0(z)) \,\,\,\,\,\,\, \mbox{on $U_n$, and,}$$
	$$f(\Phi_n(z)) = \Phi_{n-1}(f_0(z)) \,\,\,\,\,\mbox{on $A_n$.}$$
	Just as before, we may choose the lifts so that these maps agree on their common boundaries. $\Phi_n$ is conformal on $A_n$ and $\varphi_n$ is $(1+\delta)$-quasiconformal on $U_n$. We define
	$$\varphi_n = \begin{cases}
	\varphi_{n-1}(z) & z \in \left(f_0(A) \setminus A\right) \cup \bigcup_{i=1}^{n-1} U_i, \\
	\varphi_n(z) & z \in U_n, \\
	\Phi_n(z) & z \in A_n. 
	\end{cases}$$
	By construction, $\varphi_n$ evidently satisfies the four properties outlined above.
	
	The result is a sequence of $(1+\delta)$-quasiconformal mappings $\varphi_n: f_0(A) \ra f(A)$. The dilatation $\mu_n$ converges almost everywhere, since it is eventually constant on each $U_n$. The $\varphi_n$ family converges uniformly on compact sets as well. Indeed, since $\varphi_n$ conjugates the dynamics, it sends the fixed points of $f$ to fixed points of $f_0^N$. By perhaps taking a subsequence, since there are only finitely many repelling fixed points for $f_0^N$, we can assume that $\varphi_n(z) = w$ for some repelling fixed point $z$ of $f^N_0$ and $w$ of $f$ for all $n$. The sequence $\{\varphi_n\}$ is therefore sequentially compact, every sequence has a convergent subsequence. We obtain a limit $\varphi(z)$ which conjugates the dynamics and maps the Julia set of the polynomial $f_0^N$ onto the Julia set of the polynomial-like map $f$. By the Lemma $\ref{GAL}$, the limit $\varphi$ is $(1+\delta)$-quasiconformal. Since $\varphi$ is $\alpha$-H{\"o}lder with $\alpha$ close to one, the dimension of $J_f$ changes only by a small amount (see \cite{Ahlfors}, p.30). 
\end{proof}
The following corollaries are immediate from the above proof.
\begin{corollary}
	The Julia set of the polynomial-like mapping $f$ is a quasicircle with dimension in $(1,2)$ arbitrarily close to $\dim(\cJ(z^2+c))$.
\end{corollary}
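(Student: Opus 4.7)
The plan is to read off both conclusions directly from the construction in the preceding theorem, with only one genuinely new ingredient needed: that a quasiconformal image of a quasicircle is again a quasicircle.

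First I would unpack the quasicircle assertion. Because $c$ is chosen in the main cardioid of the Mandelbrot set, $\cJ(f_0) = \cJ(z^2+c)$ is a quasicircle (this is the standard characterization of main-cardioid parameters). Since $\cJ(f_0^N) = \cJ(f_0)$, the set $J_{f_0^N}$ is a quasicircle as well. The proof of the theorem produces a globally defined $(1+\delta)$-quasiconformal homeomorphism $\varphi\colon \overline{f_0^N(A)} \to \overline{f(A)}$ with $\varphi(J_{f_0^N}) = J_f$. Invoking the standard fact that quasiconformal homeomorphisms of plane domains send quasicircles to quasicircles (see, e.g., Lehto--Virtanen), I would conclude that $J_f$ is a quasicircle.

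Next I would address the dimension statement. By the Shishikura--Sullivan results already cited in Section 5, the parameter $c$ in the main cardioid can be chosen so that $\Hdim(\cJ(z^2+c)) = s$ for any prescribed $s \in (1,2)$. The theorem just proved then says that by choosing $R$ large enough (and hence $\delta$ small enough), we obtain $|\Hdim(J_f) - s| < \epsilon$ for any prescribed $\epsilon > 0$. Taking $\epsilon$ small enough that $(s-\epsilon, s+\epsilon) \subset (1,2)$ gives the desired bound $\Hdim(J_f) \in (1,2)$, and simultaneously realizes $\Hdim(J_f)$ within $\epsilon$ of $\Hdim(\cJ(z^2+c))$.

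There is no real obstacle here; the main theorem already supplies the quasiconformal conjugacy and the H\"older-type dimension estimate. The only external input is the preservation of the quasicircle property under quasiconformal maps, which is classical. Accordingly I would present the corollary in two sentences: one citing the conjugacy $\varphi$ from the theorem together with quasiconformal invariance of quasicircles, and a second sentence citing the dimension inequality from the theorem together with the Shishikura--Sullivan freedom in choosing $s = \Hdim(\cJ(z^2+c)) \in (1,2)$.
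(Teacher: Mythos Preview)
Your proposal is correct and matches the paper's own argument essentially line for line: the paper's proof is simply ``The Julia set of $f$ is the quasiconformal image of a quasicircle. The rest is just a restatement of Theorem 6.2.'' You have merely made explicit the classical fact (preservation of quasicircles under quasiconformal maps) and the role of the Shishikura--Sullivan choice of $s\in(1,2)$, both of which the paper leaves implicit.
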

\begin{proof}
	The Julia set of $f$ is the quasiconformal image of a quasicircle. The rest is just a restatement of Theorem $6.2$.
\end{proof}

\begin{corollary}
	$f$ is a hyperbolic polynomial-like mapping. 
\end{corollary}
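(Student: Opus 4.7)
The plan is to invoke the derivative-expansion characterization of hyperbolicity that was noted in Section 4: a polynomial-like map is hyperbolic iff $|(f^n)'|>1$ on its Julia set for some $n$, iff every critical point is attracted to an attracting cycle. Either characterization is essentially handed to us by the proof of Theorem 6.2.

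First, I would observe that the construction in the previous proof already produced the required expansion on a neighborhood of $J_f$. Indeed, since $c$ lies in the main cardioid, $f_0^N$ is hyperbolic, so there exists a topological annulus $A$ about $J_{f_0^N}$ on which $|(f_0^N)'(z)|\ge \alpha>1$. By Lemma \ref{SimHyper}, choosing $R$ sufficiently large we may arrange $\sup_{z\in B(0,r)}|f'(z)-(f_0^N)'(z)|<\delta$ for any prescribed $\delta>0$, and in particular for $\delta<\alpha-1$, so that $|f'(z)|\ge \alpha'>1$ on $A$ for some $\alpha'>1$.

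Next I would verify $J_f\subset A$. By construction, the nested sequence $B_n=f^{-n}(A)$ consists of topological annuli contained in $A$, each compactly contained in the previous, with widths shrinking to $0$ by the expansion just established. The filled Julia set of the polynomial-like map is $K_f=\bigcap_n f^{-n}(U)$, and outside $A$ points lie in the interior attracting basin (corresponding under $\varphi$ to the interior of the quasicircle $J_{f_0^N}$) or they escape $B(0,r)$ after finitely many iterates. Consequently $J_f=\partial K_f\subset \bigcap_n B_n\subset A$. Combined with $|f'|\ge \alpha'>1$ on $A$, the standard derivative definition of hyperbolicity is verified directly (taking the Euclidean metric and $n=1$).

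I do not expect any serious obstacle: the two preceding statements of the theorem already package all the ingredients. The only mild care needed is confirming the topological claim that $J_f\subset A$ rather than merely $J_f\subset B(0,r)$; but this is immediate from the definition $J_f=\partial K_f$ together with the description of $K_f\setminus A$ as the attracting basin. As a remark, one can alternatively argue via the critical-point criterion: the critical points of $f$ in $B(0,r)$ are small perturbations of those of $f_0^N$, all of which lie in the immediate basin of the attracting fixed point of $f_0^N$; since attracting basins are stable under $C^1$-small perturbation of the map, these critical points of $f$ are attracted to the corresponding attracting fixed point of $f$, yielding hyperbolicity by the equivalent criterion.
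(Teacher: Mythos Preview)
Your proposal is correct and takes essentially the same approach as the paper. The paper's proof is a one-liner---``We showed $f$ was expanding on $\cJ(f)$ in the proof above''---and your write-up simply unpacks that reference: the annulus $A$ on which $|f'|\ge\alpha'>1$ was produced in the proof of Theorem 6.2, and $J_f\subset A$ follows from the nested $B_n=f^{-n}(A)$ constructed there (equivalently, from the conjugacy $\varphi$ carrying $J_{f_0^N}\subset A$ onto $J_f$).
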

\begin{proof}
	We showed $f$ was expanding on $\cJ(f)$ in the proof above.
\end{proof}

To conclude this section, we review some notation we will use for the rest of the paper. We now know $f: B(0,r) \ra V$ viewed as a polynomial-like mapping has a quasicircle Julia set and one attracting basin. We will let $K_f$ denote the filled Julia set of the polynomial-like mapping $f$, and write $K_f = J_f \cup B_f$, where $J_f$ is the Julia set and $B_f$ is the attracting basin.

\section{The Local Behavior of $f$}
We now move on to analyzing the function $f$ away from the origin. The purpose of this section is to show that $f$ behaves like simpler functions (the functions $F_j$ defined in Section $5$ and simple power functins) on suitably defined regions of $\bC$.  Recall that
$$f(z) := f_0^N(z) \cdot \prod_{j=1}^{\infty} F_j(z).$$
To be more specific, we will  show, quantitatively, that we can decompose $\bC \setminus B(0,R_1/4)$ into regions where $f$ looks approximately like $F_j$. The observations and estimates here are vital for understanding to precise dynamical behavior of $f$. 

We define 
$$H_m(z) = z^{m}(2-z^m).$$
A description of the conformal mapping behavior of $H_m$ can be found in Section $9$ of \cite{CB1}. For our purposes, we will need to consider the connected components of $\bC \setminus \{|H_m(z)| = 1\}$. This set has $m+2$ connected components, one unbounded, one containing the origin, and $m$ petals which we denote by $\Omega^p_m$. $H_m: \Omega^p_m \ra \bD$ is a conformal mapping, and $\diam(\Omega_m^p) = O(1/m)$. See Figure 3.

\begin{figure}[!h]
	\centering{\includegraphics[width=\textwidth]{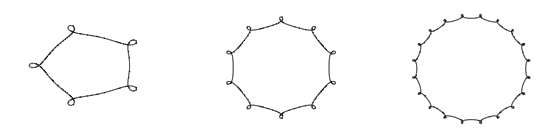}}
	\caption{A an illustration of the level sets of $\{|H_m(z)| = 1\}$ for $m = 5,10$ and $20$. There are $m$ petals where $H_m$ is a conformal mapping to the disk, and as $m$ grows, the diameter of the petals shrinks. All the points on $\{|H_m(z)| = 1\}$ are distance $O(1/m)$ from the unit circle $|z| =1$.}
\end{figure} 
 
Next, we decompose $\bC \setminus B(0,  R_1/4)$ into annuli as follows. 
$$A_k := \left\{ z: \frac{1}{4} R_k \leq |z| \leq 4 R_k \right \},
\,\,\,\,\,\,\,\,\,\,\,\,\,\,\, B_k := \left\{ z: 4  R_k \leq |z| \leq \frac{1}{4}  R_{k+1} \right \},$$
$$V_k := \left\{ z: \frac{3}{2} R_k \leq |z| \leq \frac{5}{2}R_k \right \},\,\,\,\,\,\,\,\,\,\ U_k := \left\{ z: \frac{5}{4}  R_k \leq |z| \leq 3 R_k \right \}.$$
Note that $V_k$ is compactly contained inside of $U_k$. See Figure 4.
\begin{figure}[!h]
\centerline{ \includegraphics[height=3in,width=3in]{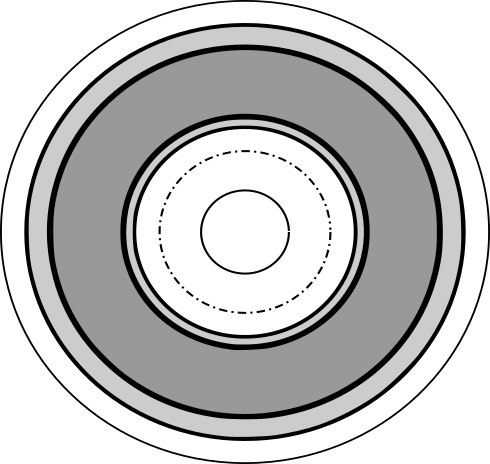} }
\caption{A Schematic for $A_k$, $k \geq 1$. The innermost circle and outermost circle form the boundary of $A_k$. The dashed line is the $z$ with $|z| = R_k$. The lightly shaded region is $U_k$, and the darker region is $V_k$, which is compactly contained in $U_k$. In the upcoming sections, we will see that the Julia set if $f$ is contained near the circle $|z| = R_k$ and in $V_k$.}
\end{figure}

\begin{lemma}
	With $H_m$ defined above, we have 
	$$F_{k} (z) = \frac{1}{2} \left(\frac{ R_k}{z}\right)^{n_k} H_{n_k}\left (\frac{z}{R_k} \right).$$
\end{lemma}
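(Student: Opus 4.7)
The plan is just a direct algebraic verification. I would set $w = z/R_k$ so that $F_k(z) = 1 - \tfrac{1}{2} w^{n_k}$. By the definition $H_m(\zeta) = \zeta^m(2 - \zeta^m)$ applied at $\zeta = w$ and $m = n_k$, we have $H_{n_k}(w) = 2 w^{n_k} - w^{2 n_k}$. Multiplying by $\tfrac{1}{2} w^{-n_k} = \tfrac{1}{2}(R_k/z)^{n_k}$ pulls out a factor of $w^{-n_k}$ from each term and yields $1 - \tfrac{1}{2} w^{n_k}$, which is exactly $F_k(z)$.

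So the proof is one line of algebra: substitute $\zeta = z/R_k$ into the definition of $H_{n_k}$, factor out $(R_k/z)^{n_k}/2$, and observe that what remains is precisely $F_k(z)$. There is no obstacle; the lemma is a reformulation of the definition of $F_k$ that rewrites it in terms of $H_{n_k}$, which will be useful in later sections because it identifies the $m = n_k$ petal structure of the level sets $\{|H_m| = 1\}$ with the local geometry of $\{|F_k| = 1\}$ inside $A_k$ (after rescaling by $R_k$ and multiplying by the power factor $\tfrac{1}{2}(R_k/z)^{n_k}$, which is nearly unimodular on $V_k$).
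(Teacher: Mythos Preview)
Your proof is correct and is essentially identical to the paper's: both perform the one-line algebraic verification by substituting $z/R_k$ into the definition of $H_{n_k}$ and cancelling the $(z/R_k)^{n_k}$ factor to recover $F_k(z)$.
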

\begin{proof}
	This is a simple computation: 
	\begin{eqnarray*}
	\frac{1}{2} \left(\frac{R_k}{z}\right)^{n_k} H_{n_k}\left (\frac{z}{R_k} \right) &=&  \frac{1}{2} \left(\frac{R_k}{z}\right)^{n_k} \left (\frac{z}{R_k} \right)^{n_k} \left (2- \left (\frac{z}{R_k} \right)^{n_k} \right) \\
	&=& \frac{1}{2} \left (2- \left (\frac{z}{R_k} \right)^{n_k} \right) \\
	&=& F_k(z). 
	\end{eqnarray*}
\end{proof}

\begin{lemma}
	If $z \in A_k$, there is a constant $C_k$ so that 
	$$f (z) = C_k H_{n_k}\left( \frac{z}{ R_k} \right) (1 + O( R_k^{-1})).$$
	For $k \geq 2$, the constant $C_k$ is given by the formula
	$$C_k = (-1)^{k-1}2^{-k}  R_k^{n_k} \prod_{j=1}^{k-1}  R_j^{-n_j}.$$
	For $k =1$ the constant is given by
	$$C_1 = \frac{1}{2} R_1^{n_1}.$$
	\label{A_k}
\end{lemma}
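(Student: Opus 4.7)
The plan is to split the product defining $f$ into four pieces,
$$f(z) = f_0^N(z) \cdot \prod_{j=1}^{k-1} F_j(z) \cdot F_k(z) \cdot \prod_{j=k+1}^{\infty} F_j(z),$$
and estimate each piece separately for $z \in A_k$. The key organizing idea is that $F_k$ is the only factor that carries nontrivial conformal content on $A_k$ (since $z/R_k$ has unit modulus there); every other piece should collapse to a power of $z$ times a multiplicative error of size $1 + O(R_k^{-1})$. The previous lemma already handles the middle factor by giving $F_k(z) = \tfrac{1}{2}(R_k/z)^{n_k} H_{n_k}(z/R_k)$, so the rest of the argument is bookkeeping.

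For the outer factors I would invoke Lemma \ref{bigoh}(3), which gives $\prod_{j=k+1}^{\infty} F_j(z) = 1 + O(R_k^{-1})$ directly (valid since $|z| \leq 4R_k$ on $A_k$), and for $f_0^N(z)$ I would use that it is monic of degree $2^N$, so $f_0^N(z) = z^{2^N}(1 + O(1/z)) = z^{2^N}(1 + O(R_k^{-1}))$ when $|z| \geq R_k/4$. The inner product is the main work: for each $j < k$ I would factor
$$F_j(z) = -\tfrac{1}{2}\bigl(z/R_j\bigr)^{n_j}\bigl(1 - 2(R_j/z)^{n_j}\bigr),$$
producing a leading polynomial
$$(-1)^{k-1} 2^{-(k-1)} z^{\sum_{j=1}^{k-1} n_j} \prod_{j=1}^{k-1} R_j^{-n_j}$$
together with the correction $\prod_{j=1}^{k-1}(1 - 2(R_j/z)^{n_j})$. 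By Lemma \ref{nk}(2), $\sum_{j=1}^{k-1} n_j = n_k - 2^N$, so the combined exponent of $z$ coming from $f_0^N(z) \cdot \prod_{j<k} F_j(z)$ is exactly $n_k$. The correction factor is $1 + O(R_k^{-1})$: each summand $(R_j/z)^{n_j}$ is bounded by $(4R_j/R_k)^{n_j}$, and by Lemma \ref{Rk} (or Corollary \ref{Rk2}) the ratios $R_j/R_k$ decay doubly-exponentially in $k-j$, so even after multiplying by $4^{n_j}$ every term is $\ll R_k^{-1}$ and their sum is $O(R_k^{-1})$.

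Multiplying everything together, the factor $z^{n_k}$ coming from the polynomial pieces cancels against the $(R_k/z)^{n_k}$ inside $F_k(z)$, leaving exactly $R_k^{n_k}$. Collecting the signs $(-1)^{k-1}$, the $2^{-k}$, and the $\prod R_j^{-n_j}$ reproduces the constant $C_k$ in the statement, and multiplying $H_{n_k}(z/R_k)$ by the three independent error factors $(1 + O(R_k^{-1}))$ yields a single $(1 + O(R_k^{-1}))$ error since each error is already small and $H_{n_k}(z/R_k)$ is the quantity being approximated (so the errors enter multiplicatively, not additively). The case $k = 1$ is the same computation with the empty product $\prod_{j=1}^{0}$ understood as $1$, giving $C_1 = \tfrac{1}{2} R_1^{n_1}$.

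The only real obstacle is the bookkeeping for $\prod_{j=1}^{k-1}(1 - 2(R_j/z)^{n_j})$: one cannot just quote Lemma \ref{bigoh}(1) verbatim because the exponent $m$ there is fixed while here $m = n_j$ varies with $j$, and because $z$ (not $R_k$) appears in the denominator. However, the growth rate in Lemma \ref{Rk} is so rapid that the $j = k-1$ term already contributes $\ll R_k^{-1}$, and all earlier terms are even smaller, so a direct bound suffices. Once that estimate is in hand the rest of the lemma is a careful but routine algebraic accounting.
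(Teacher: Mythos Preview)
Your proposal is correct and follows essentially the same approach as the paper: both split $f$ into the same four factors, handle the tail via Lemma~\ref{bigoh}(3), write $f_0^N(z) = z^{2^N}(1+O(R_k^{-1}))$, factor each $F_j(z)$ for $j<k$ as $-\tfrac{1}{2}(z/R_j)^{n_j}(1-2(R_j/z)^{n_j})$, and use Lemma~\ref{nk}(2) so the powers of $z$ cancel against the $(R_k/z)^{n_k}$ from the formula $F_k(z) = \tfrac{1}{2}(R_k/z)^{n_k}H_{n_k}(z/R_k)$. The only cosmetic difference is that the paper bounds the correction product $\prod_{j<k}(1-2(R_j/z)^{n_j})$ by invoking Lemma~\ref{bigoh} after observing $n_k \leq R_k^{1/2}$, whereas you argue directly from the growth estimates in Lemma~\ref{Rk}; both routes give the same $1+O(R_k^{-1})$ bound.
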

\begin{proof}
	Write 
	$$f(z) = f_0^N(z) \cdot \left( \prod_{j = 1}^{k-1} F_j(z) \right) \cdot F_k(z) \cdot \left( \prod_{j=k+1}^{\infty} F_j(z) \right)$$
	The tail end of the infinite product is easy to estimate; it satisfies the hypotheses of Lemma $5.3$, so we obtain
	$$  \prod_{j=k+1}^{\infty} F_j(z)  = 1 + O(  R_k^{-1}).$$
	Next we use Lemma part $(2)$ of \ref{nk} to compute
	\begin{eqnarray*}
		f_0^N(z) \cdot \left( \prod_{j = 1}^{k-1} F_j(z) \right) \cdot F_k(z)  &=&\left(  z^{-2^N} f_0^N(z)\right)  \left( \prod_{j = 1}^{k-1}z^{-n_j} F_j(z) \right) \left( z^{n_k} F_k(z)\right) \\
		&=& I \cdot II \cdot III. 
	\end{eqnarray*}
	We show how to estimate each term. Note first that $z \in A_k$, $|z| > R$, so that by $(\ref{one})$,
	$$I = z^{-2^N}f_0^N(z) = 1 + O(R_k^{-1}).$$
	Similarly, we can check that for $j=1,\dots,k-1$ that
	$$z^{-n_j} F_j( z) = z^{-n_j} \left (1- \frac{1}{2}\left (\frac{z}{ R_k} \right)^{n_j} \right) = \frac{-1}{2} R_j^{-n_j} \left( 1 - 2\left ( \frac{ R_j}{z} \right)^{n_j} \right).$$
	Since $z \in A_k$, $|z|$ is comparable to $ R_k$. Therefore 
	$$z^{-n_j} F_j(z) =  \frac{-1}{2}  R_j^{-n_j} \left( 1 + O \left(\left(\frac{ R_j}{ R_k} \right)^{n_j} \right)\right).$$
	Finally, observe that since $R > 4$ we have $n_k = 2^{N+k-1} \leq 2^{N-1} R^{2^k/2}  \leq R_k^{1/2}$. It follows from Lemma $5.3$ that when $z \in A_k$ we have
	\begin{eqnarray*}
	II &=& \prod_{j=1}^{k-1}\frac{-1}{2}  R_j^{-n_j} \left(1 - 2\left( \frac{ R_j}{ R_k} \right)^{n_j} \right) \\
	&=& \left(\prod_{j=1}^{k-1}\frac{-1}{2}  R_j^{-n_j}(1 + O( R_k^{-1}))\right)\\
	&=& \left(\prod_{j=1}^{k-1}\frac{-1}{2}R_j^{-n_j}\right) (1 + O( R_k^{-1})).
	\end{eqnarray*}
	Finally, by Lemma $7.1$ we have 
	$$III = z^{n_k} F_k(z) = \frac{1}{2} R_k^{n_k} H_{n_k}\left(\frac{z}{R_k}\right).$$
	Combining all these observations yields the desired formula, along with the formulas for $C_k$.
\end{proof}

\begin{lemma}
	Let $R > 8$, $N > 3$, and $k \geq 2$. Then $|C_k| \geq R_{k}^{n_{k-1}}/2^k   \geq 8 R_k.$ When $k = 1$, then $|C_1| = R_1^{n_1}/2  \geq 8 R_1.$
\end{lemma}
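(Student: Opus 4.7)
I would work directly from the explicit expression
$$|C_k| = 2^{-k}\, R_k^{n_k}\, \prod_{j=1}^{k-1} R_j^{-n_j} = 2^{-k}\, R_k^{n_k - n_{k-1}} \cdot \frac{R_k^{n_{k-1}}}{\prod_{j=1}^{k-1} R_j^{n_j}},$$
and use $n_k - n_{k-1} = n_{k-1}$ (Lemma \ref{nk}(1)) to reduce the first inequality to the statement
$$\prod_{j=1}^{k-1} R_j^{n_j} \le R_k^{n_{k-1}}.$$
This is the only nontrivial step, and it is the place I would expect to spend most of the effort, though in the end it is essentially an exercise in iterating the growth estimate from Lemma \ref{Rk}.

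The key input is $R_k \ge R_{k-1}^{2^N}$, which iterates to $R_j \le R_k^{1/2^{(k-j)N}}$ for all $j \le k$. Inserting this into the product and using $n_j = 2^{N+j-1}$ gives
$$\prod_{j=1}^{k-1} R_j^{n_j} \le R_k^{S}, \qquad S = \sum_{j=1}^{k-1} \frac{n_j}{2^{(k-j)N}}.$$
Re-indexing by $l = k-1-j$ I would rewrite $S = 2^{k-2}\sum_{l=0}^{k-2} 2^{-(N+1)l}$, which is a geometric sum bounded by $2 \cdot 2^{k-2} = 2^{k-1}$. Thus $\prod_{j=1}^{k-1}R_j^{n_j} \le R_k^{2^{k-1}}$, which for $N\ge 10$ is dwarfed by $R_k^{n_{k-1}} = R_k^{2^{N+k-2}}$ (the exponents differ by a factor of $2^{N-1}$). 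This gives $|C_k| \ge R_k^{n_{k-1}}/2^k$.

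For the second inequality $R_k^{n_{k-1}}/2^k \ge 8 R_k$, equivalently $R_k^{n_{k-1}-1} \ge 2^{k+3}$, I would use the super-exponential growth from Corollary \ref{Rk2}, which gives $\log_2 R_k \ge 2^{(k-1)N}$. Since $n_{k-1} - 1 \ge 2^{N+k-2} - 1 \ge 1$ for $k \ge 2$ and $N \ge 10$, the left-hand side has logarithm at least $2^{(k-1)N}$, while the right-hand side has logarithm $k+3$; these are incomparable for $N \ge 10$ and all $k \ge 2$. The case $k=1$ is a direct computation: $|C_1| = R_1^{n_1}/2 = (2R)^{2^N}/2$, and we need only $R_1^{2^N-1} \ge 16$, which follows from $R_1 = 2R \ge 16$ and $n_1 \ge 2^{10}$.

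The only real obstacle is packaging the exponent arithmetic cleanly — one has to be careful that the geometric series estimate uses $N \ge 1$ only, while the assumption $N \ge 10$ provides comfortable slack for every subsequent step. Everything else is bookkeeping on top of Lemmas \ref{nk}, \ref{Rk}, and Corollary \ref{Rk2}.
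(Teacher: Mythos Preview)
Your argument is correct, but it takes a noticeably heavier route than the paper's. The paper uses only the weak growth bound $R_{k+1}\ge 4R_k^2$ (Corollary~\ref{Rk2}), which iterates to the single inequality $R_j \le \sqrt{R_k}$ for every $j\le k-1$. Substituting this directly gives
\[
\prod_{j=1}^{k-1} R_j^{n_j} \le R_k^{\frac{1}{2}\sum_{j=1}^{k-1} n_j},
\]
and the exponent identity $\sum_{j=1}^{k-1} n_j = n_k - 2^N < 2n_{k-1}$ from Lemma~\ref{nk}(2) finishes the first inequality in one line. By contrast, you invoke the full strength $R_k\ge R_{k-1}^{2^N}$ and then must track a geometric sum in the exponents; this works, but the extra precision of the growth bound is wasted on an estimate that only needs $R_j\le\sqrt{R_k}$. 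The paper's version is cleaner precisely because the exponent sum collapses via the combinatorial identity rather than via a separate geometric-series bound. Your treatment of the second inequality and the $k=1$ case is fine and essentially matches the paper's (terse) handling.
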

\begin{proof}
	The $k =1$ case is obvious. For $k \geq 2$, we can compute, using the fact that $  R_j \leq \sqrt{R_k}$, that
	\begin{eqnarray*}
	|C_k| &=& \frac{1}{2^{k}}  R_k^{n_k} \prod_{j=1}^{k-1} R_j^{-n_j} 
	\geq  \frac{1}{2^{k}}R_k^{n_k} \prod_{j=1}^{k-1}  R_k^{-n_j/2} \\
	&=& \frac{1}{2^{k}}R_k^{n_k - n_{k-1}} 
	= \frac{1}{2^{k}}R_k^{n_{k-1}}. 
	\end{eqnarray*}
	So in this case we see that 
	$$|C_k| \geq \frac{1}{2^k}  R_k^{n_{k-1}} > 8 R_k.$$
\end{proof}

\begin{lemma}
	For all $k$, $C_{k+1} \geq C_k \geq 1.$
\end{lemma}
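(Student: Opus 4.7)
The plan is to reduce both parts of the statement to the explicit formulas from Lemma \ref{A_k} together with the crude doubling estimate $R_{k+1} \geq 4 R_k^2$ from Corollary \ref{Rk2}. (I read the statement as concerning $|C_k|$, consistent with the notation already used in Lemma 7.3; the signs $(-1)^{k-1}$ in the formula for $C_k$ are irrelevant for what follows.)

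First I would dispose of the lower bound $|C_k| \geq 1$. For $k \geq 2$ this is essentially already done: Lemma 7.3 gives $|C_k| \geq 8 R_k$, and since $R_k \geq R_1 = 2R$ is large (indeed we may take $R$ arbitrarily large throughout the paper), this is far greater than $1$. For $k=1$ the formula $|C_1| = R_1^{n_1}/2$ trivially exceeds $1$.

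Next, for the monotonicity, I would form the ratio using the explicit formula. The products telescope, leaving
\begin{equation*}
\frac{|C_{k+1}|}{|C_k|} \;=\; \frac{1}{2}\, R_{k+1}^{n_{k+1}}\, R_k^{-2 n_k}.
\end{equation*}
Now invoke the identity $n_{k+1} = 2 n_k$ from Lemma \ref{nk} to rewrite $R_k^{-2n_k} = R_k^{-n_{k+1}}$, and then apply $R_{k+1} \geq 4 R_k^2$ from Corollary \ref{Rk2} to get
\begin{equation*}
R_{k+1}^{n_{k+1}} R_k^{-n_{k+1}} \;\geq\; (4 R_k^2)^{n_{k+1}} R_k^{-n_{k+1}} \;=\; (4 R_k)^{n_{k+1}}.
\end{equation*}
Since $R_k \geq 1$ and $n_{k+1} \geq 1$, this is already much larger than $2$, so $|C_{k+1}|/|C_k| \geq 1$ as required. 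The base case $k = 1 \to k = 2$ fits the same computation once one notes that the empty product in the general formula of Lemma \ref{A_k} agrees with the $k = 1$ formula $|C_1| = R_1^{n_1}/2$.

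There is no serious obstacle here; the lemma is a quantitative observation about how rapidly the sequence $R_k$ must grow relative to the degrees $n_k$. The only thing to be careful about is the bookkeeping in the telescoping ratio and the exponent identity $n_{k+1} = 2 n_k$, which are handled cleanly by the lemmas already established in Section 5.
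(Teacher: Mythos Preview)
Your argument is correct and follows essentially the same route as the paper: form the ratio $|C_{k+1}|/|C_k|$, telescope using the explicit formula to get $\tfrac{1}{2}(R_{k+1}/R_k)^{n_{k+1}}$, and then invoke a growth estimate on $R_{k+1}/R_k$. The only difference is that the paper uses the weaker inequality $R_{k+1}\geq 2R_k$ (giving $|C_{k+1}|/|C_k|\geq 2^{n_{k+1}-1}$), whereas you reach for the stronger $R_{k+1}\geq 4R_k^2$ from Corollary \ref{Rk2}; either suffices, and the paper's choice is slightly more economical.
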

\begin{proof}
	We compute using the fact that $ R_{k+1} \geq 2  R_k$
	\begin{eqnarray*}
	\frac{|C_{k+1}|}{|C_k|} &=& \frac{1}{2} \frac{ R_{k+1}^{n_{k+1}}}{R_k^{n_{k+1}}} \geq 2^{n_{k+1}-1} \geq 1.
	\end{eqnarray*} 
\end{proof}
The next lemma says that $f$ looks like a power function on $B_k$.
\begin{lemma}
	For $z \in B_k$, we have
	$$f(z) = -C_k \left(\frac{z}{ R_k}\right)^{2n_k} (1 + O( R^{-1}_k)) \cdot (1 + O(4^{-n_{k+1}}))\cdot (1 + O(4^{-n_k})).$$
\end{lemma}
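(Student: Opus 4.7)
The plan is to follow the same template as the proof of Lemma \ref{A_k}, splitting $f$ into four pieces and estimating each, but now in the regime $4R_k\le |z|\le R_{k+1}/4$ where the roles of $F_k$ and the tail are reversed. Write
\begin{equation*}
f(z)=f_0^N(z)\cdot\prod_{j=1}^{k-1}F_j(z)\cdot F_k(z)\cdot\prod_{j=k+1}^\infty F_j(z).
\end{equation*}
For each $j\le k$, since $|z|\ge 4R_k\ge 4R_j$, factor out the dominant monomial:
\begin{equation*}
F_j(z)=-\tfrac{1}{2}(z/R_j)^{n_j}\bigl(1-2(R_j/z)^{n_j}\bigr).
\end{equation*}

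For the leading part, the product of $z^{2^N}$ (the leading monomial of $f_0^N$) with the $k$ leading monomials $-\tfrac{1}{2}(z/R_j)^{n_j}$ has total degree $2^N+\sum_{j=1}^k n_j = n_{k+1}=2n_k$ by Lemma \ref{nk}(2), and coefficient $\frac{(-1)^k}{2^k}\prod_{j=1}^k R_j^{-n_j}$. Comparing with the formula for $C_k$ given in Lemma \ref{A_k} shows this equals $-C_k R_k^{-2n_k}$, so the leading contribution is exactly $-C_k(z/R_k)^{2n_k}$.

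The three error factors then come out as follows. The factor $1+O(R_k^{-1})$ absorbs two contributions. First, $f_0^N(z)=z^{2^N}(1+O(R_k^{-1}))$ by $(\ref{one})$ since $|z|\ge R$. Second, $\prod_{j=1}^{k-1}\bigl(1-2(R_j/z)^{n_j}\bigr)=1+O(R_k^{-1})$ by the same telescoping argument used in Lemma \ref{bigoh}; the key estimate is $(R_j/z)^{n_j}\le (R_j/R_k)^{n_j}4^{-n_j}$, combined with $R_{k-1}\le\sqrt{R_k}$ from Corollary \ref{Rk2}. The factor $1+O(4^{-n_k})$ is the correction for $F_k$ itself: since $|R_k/z|\le 1/4$, $F_k(z)=-\tfrac{1}{2}(z/R_k)^{n_k}(1+O(4^{-n_k}))$. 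Finally, the factor $1+O(4^{-n_{k+1}})$ is the tail $\prod_{j\ge k+1}F_j(z)$: for $j=k+1$, $|z/R_{k+1}|\le 1/4$ gives $|1-F_{k+1}(z)|=O(4^{-n_{k+1}})$, and for $j>k+1$ the superexponential growth $R_{j+1}\ge 4R_j^2$ from Corollary \ref{Rk2} makes the remaining terms vastly smaller, so the tail is $1+O(4^{-n_{k+1}})$.

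The main potential obstacle is confirming that the finite product over $j<k$ achieves $1+O(R_k^{-1})$ rather than the weaker $1+O(R_k^{-1/2})$ stated in Lemma \ref{bigoh}. When $|z|\ge 4R_k$, every factor $(R_j/z)^{n_j}$ gains an extra $4^{-n_j}$ over the $|z|\sim R_k$ regime of Lemma \ref{A_k}. The dominant $j=k-1$ term is bounded by $R_k^{-n_{k-1}/2}\cdot 4^{-n_{k-1}}\le R_k^{-1}$ since $n_{k-1}\ge 2^N\ge 2$, and the earlier terms are geometrically smaller. Multiplying the four pieces then yields the claimed formula.
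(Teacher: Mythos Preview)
Your proof is correct and follows essentially the same approach as the paper. The only organizational difference is that the paper shortcuts your first two steps by invoking the proof of Lemma~\ref{A_k} to write $f(z)=C_k\,H_{n_k}(z/R_k)\,F_{k+1}(z)\,(1+O(R_k^{-1}))$ directly, and then expands $H_{n_k}(w)=-w^{2n_k}(1+O(4^{-n_k}))$ for $|w|\ge 4$ rather than factoring each $F_j$ separately; your direct computation with the $F_j$'s is equivalent and arrives at the same three error factors.
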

\begin{proof}
	Following the proof of Lemma \ref{A_k}, we have 
	$$f(z) = C_k H_{n_k}\left( \frac{z}{R_k} \right) F_{k+1}(z) (1 + O(R_k^{-1})).$$
	We have kept the $F_{k+1}$ term, since $|z| \geq R_k$. However, $|z| \in B_k$, $4 R_k \leq |z| \leq R_{k+1}/4$, so that,
	\begin{eqnarray*}
	H_{n_k}\left( \frac{z}{ R_k} \right) &=&  \left (\frac{z}{ R_k} \right)^{n_k} \left (2- \left (\frac{z}{ R_k} \right)^{n_k} \right) \\
	&=& \left( \frac{z}{ R_k} \right)^{2n_k} \left(2 \left( \frac{ R_k}{z} \right)^{n_k} - 1\right) \\
	&=& -\left( \frac{z}{ R_k} \right)^{2n_k }  \left(1 + O(4^{-n_k}) \right).
	\end{eqnarray*}
	A similar computation yields
	$$F_{k+1}(z) = \left( 1- \frac{1}{2} \left(\frac{z}{ R_{k+1}}\right)^{n_{k+1}}\right) = (1 + O(4^{-n_{k+1}})).$$
\end{proof}
The following useful corollary is immediate.
\begin{corollary}
	$f$ is never zero on $B_k$.
\end{corollary}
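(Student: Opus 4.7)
The plan is to read off the result directly from the factorization of $f$ on $B_k$ established in the preceding lemma, and verify that each of its four factors is nonzero throughout $B_k$.

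First I would recall the identity
$$f(z) \;=\; -C_k \left(\frac{z}{R_k}\right)^{2n_k} \bigl(1 + O(R_k^{-1})\bigr) \bigl(1 + O(4^{-n_{k+1}})\bigr)\bigl(1 + O(4^{-n_k})\bigr)$$
valid for $z \in B_k$, and then analyze each factor in turn. The constant $C_k$ satisfies $|C_k| \geq 1$ by the lemma following Lemma~\ref{A_k}, so $C_k \neq 0$. The monomial factor $(z/R_k)^{2n_k}$ is nonzero because $z \in B_k$ forces $|z| \geq 4R_k > 0$, so $z \neq 0$ on $B_k$.

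It remains to see that the three correction factors $(1 + O(\cdot))$ do not vanish. For this I would use the freedom we have in choosing $R$ large. Since $R_k \geq R_1 = 2R$ and since $R_k$ grows superexponentially by Corollary~\ref{Rk2}, the implicit constant in $O(R_k^{-1})$ can be made strictly smaller than $1/2$ provided $R$ is taken sufficiently large. Likewise, since $n_k \geq 2^N$ with $N \geq 10$, the terms $4^{-n_k}$ and $4^{-n_{k+1}}$ are bounded by $4^{-2^{10}}$, which is far less than $1/2$. Consequently each of the three factors has modulus at least $1/2 > 0$.

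Combining these observations, the product of the four factors is nonzero on all of $B_k$, so $f(z) \neq 0$ there. There is no real obstacle here: the only thing that could go wrong would be the error terms absorbing the leading factors, and the superexponential growth of $R_k$ together with $n_k \geq 2^N$ rules this out uniformly in $k$.
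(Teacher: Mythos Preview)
Your proposal is correct and follows exactly the route the paper intends: the paper simply declares the corollary ``immediate'' from the factorization in the preceding lemma, and your argument spells out precisely why each factor in that product is nonzero on $B_k$. There is nothing to add.
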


The next proofs follow similar lines of reasoning, and are identical to the proofs in Section 10 of \cite{CB1}. We record them below.
\begin{lemma}
	For all $k\geq 1$, and for $z$ satisfying $5R_k/4   \leq |z| \leq 4 R_k$, we have 
	$$f(z) = C_k \left( \frac{z}{ R_k}\right)^{2n_k}\left(1 + O\left(\left(\frac{4}{5} \right)^{n_k} \right) \right) (1 + O(R_k^{-1})).$$
	\label{Uk}
\end{lemma}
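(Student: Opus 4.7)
The plan is to apply Lemma \ref{A_k} and then carry out a short asymptotic expansion of $H_{n_k}(z/R_k)$ in the regime where $|z/R_k|$ is bounded below by $5/4$. The region $\{5R_k/4 \leq |z| \leq 4R_k\}$ is a subset of $A_k=\{R_k/4 \leq |z| \leq 4R_k\}$, so Lemma \ref{A_k} gives the formula
$$f(z) \;=\; C_k\, H_{n_k}\!\left(\frac{z}{R_k}\right)\bigl(1 + O(R_k^{-1})\bigr)$$
with no further work. It therefore suffices to estimate the factor $H_{n_k}(z/R_k)$ on this subregion.

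Set $w = z/R_k$, so $|w|\in[5/4,\,4]$. Starting from the definition $H_{n_k}(w) = w^{n_k}(2-w^{n_k})$, I would factor out $w^{2n_k}$:
$$H_{n_k}(w) \;=\; -\,w^{2n_k}\bigl(1 - 2w^{-n_k}\bigr).$$
Because $|w|\geq 5/4$, we have $|w^{-n_k}| \leq (4/5)^{n_k}$, so the parenthesis is $1 + O((4/5)^{n_k})$. Hence
$$H_{n_k}(z/R_k) \;=\; -\,\left(\frac{z}{R_k}\right)^{2n_k}\!\bigl(1 + O((4/5)^{n_k})\bigr).$$
Substituting this into the formula from Lemma \ref{A_k} and collecting the two independent error factors yields the stated asymptotic. (The sign is absorbed into the constant in exactly the same way as in Lemma 7.4 for the $B_k$ region; both lemmas carry the same overall minus sign coming from the $w^{n_k}$--versus--$w^{-n_k}$ dominance flip when $|w|>1$.)

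The argument has no real obstacle: it is essentially the same one-line expansion as in the $B_k$ lemma, the only difference being that here $|w|$ is bounded (by $4$), whereas there $|w|$ was unbounded. Consequently the error estimate $(4/5)^{n_k}$ here is genuinely exponential in $n_k$ but with a worse base than the $4^{-n_k}$ found in the $B_k$ lemma, reflecting how much larger the inner radius $5R_k/4$ is relative to $R_k$ as compared with the inner radius $4R_k$ in the $B_k$ lemma. The estimate from Lemma \ref{A_k} provides the $(1+O(R_k^{-1}))$ factor, which together with $(1+O((4/5)^{n_k}))$ gives precisely the claimed form.
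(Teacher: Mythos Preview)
Your proof is correct and is exactly the approach the paper has in mind: the paper defers the argument to \cite{CB1} but explicitly says it ``follows similar lines of reasoning'' as the preceding $B_k$ estimate (Lemma 7.5), namely invoke Lemma~\ref{A_k} on $A_k$ and then factor $H_{n_k}(w)=-w^{2n_k}(1-2w^{-n_k})$ using $|w|\ge 5/4$. One small correction to your sign remark: in Lemma 7.5 the minus is \emph{not} absorbed into $C_k$ but written explicitly as $-C_k$, and your derivation here likewise produces $-C_k(z/R_k)^{2n_k}$; the printed statement of Lemma~\ref{Uk} simply drops the sign, which is harmless since every subsequent use concerns only $|f|$ and the magnitude of the error factors.
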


\begin{lemma}
	For $k \geq 1$, and $R_k/4 \leq |z| \leq 4R_k/5$, we have 
	$$f(z) = 2C_k\left( \frac{z}{ R_k}\right)^{n_k} \cdot \left( 1 + O\left( \frac{4}{5} \right)^{n_k} \right)( 1 + O(R_k^{-1})).$$
\end{lemma}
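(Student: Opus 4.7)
The strategy is to reduce directly to Lemma \ref{A_k}, which already gives the representation
$$f(z) = C_k\, H_{n_k}\!\left(\frac{z}{R_k}\right)\bigl(1 + O(R_k^{-1})\bigr)$$
valid throughout the annulus $A_k = \{R_k/4 \leq |z| \leq 4R_k\}$. Since the hypothesis $R_k/4 \leq |z| \leq 4R_k/5$ places $z$ inside $A_k$, that representation applies, and the only task remaining is to expand $H_{n_k}(z/R_k) = (z/R_k)^{n_k}\bigl(2 - (z/R_k)^{n_k}\bigr)$ in the regime where $|z/R_k|$ is bounded away from $1$ from below.

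First, I would set $w = z/R_k$, so that $|w| \leq 4/5$. Then
$$|w^{n_k}| \leq (4/5)^{n_k},$$
which is exponentially small in $n_k$. Writing
$$2 - w^{n_k} = 2\bigl(1 - \tfrac{1}{2}w^{n_k}\bigr) = 2\bigl(1 + O((4/5)^{n_k})\bigr),$$
we get
$$H_{n_k}(w) = w^{n_k}(2 - w^{n_k}) = 2 w^{n_k}\bigl(1 + O((4/5)^{n_k})\bigr).$$

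Substituting back and using Lemma \ref{A_k}, this yields
$$f(z) = C_k \cdot 2\!\left(\frac{z}{R_k}\right)^{n_k}\!\bigl(1 + O((4/5)^{n_k})\bigr)\bigl(1 + O(R_k^{-1})\bigr),$$
which is exactly the claimed formula. There is no real obstacle here: the lemma is essentially a corollary of Lemma \ref{A_k} in the regime where the second factor of $H_{n_k}$ is dominated by the constant $2$, which is the complementary regime to Lemma \ref{Uk} (where $|w| \geq 5/4$, so instead $w^{n_k}$ dominates the $2$). The only thing to double check is that the error term $O((4/5)^{n_k})$ absorbs the constant factor $1/2$ from $\tfrac{1}{2}w^{n_k}$, which it does by definition of big-O notation, and that the two error factors can be multiplied without issue, which follows since both are bounded by a small constant (for $R$ and hence $R_k$ large, and $n_k \geq 2^N \geq 2^{10}$).
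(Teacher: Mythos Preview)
Your proof is correct and is exactly the approach the paper has in mind: the paper does not spell out a proof here but simply notes that it ``follows similar lines of reasoning'' to the preceding lemmas (and refers to \cite{CB1}), meaning precisely that one invokes Lemma~\ref{A_k} on $A_k$ and then expands $H_{n_k}(z/R_k)$ in the regime $|z/R_k|\le 4/5$, just as you did.
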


\begin{lemma}
	On $U_k$, we have 
	$$f(z) = C_k \left( \frac{z}{ R_k}\right)^{2n_k} (1 + h_k(z)).$$
	Where $h_k(z)$ is holomorphic on $U_k$ with
	$$|h_k(z)| = O \left( \left(\frac{4}{5}\right)^{n_k} + R_k^{-1} \right).$$
\end{lemma}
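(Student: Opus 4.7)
The plan is to define $h_k$ directly from the conclusion we want and verify its properties using the preceding lemma. Specifically, set
$$h_k(z) := \frac{f(z)}{C_k\,(z/R_k)^{2n_k}} - 1,$$
which is well-defined and holomorphic on $U_k$ because $f$ is entire, $C_k \neq 0$ by the earlier lower bound on $|C_k|$, and the power function $(z/R_k)^{2n_k}$ is nonvanishing away from the origin; since $U_k = \{5R_k/4 \leq |z| \leq 3R_k\}$ is disjoint from a neighborhood of $0$, there is no singularity to worry about.

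Next I would observe that $U_k \subset \{5R_k/4 \leq |z| \leq 4R_k\}$, so Lemma \ref{Uk} applies on all of $U_k$ and gives
$$f(z) = C_k \Bigl(\tfrac{z}{R_k}\Bigr)^{2n_k}\Bigl(1 + O\bigl((4/5)^{n_k}\bigr)\Bigr)\bigl(1 + O(R_k^{-1})\bigr).$$
Dividing both sides by $C_k(z/R_k)^{2n_k}$ yields
$$1 + h_k(z) = \bigl(1 + O((4/5)^{n_k})\bigr)\bigl(1 + O(R_k^{-1})\bigr).$$
Multiplying out and absorbing the cross term (which is dominated by either factor) gives
$$h_k(z) = O\bigl((4/5)^{n_k} + R_k^{-1}\bigr),$$
uniformly for $z \in U_k$, which is the desired estimate.

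There is essentially no obstacle here; the statement is a cosmetic repackaging of Lemma \ref{Uk} into a form where the error is isolated in a single holomorphic factor $1 + h_k(z)$, which will be convenient for applications (for instance, to invoke the argument principle or Koebe-type distortion estimates on $U_k$ in later sections). The only care required is noting that the stated domain $U_k$ lies inside the annulus where the previous lemma is valid and that the division step does not introduce singularities, both of which are immediate.
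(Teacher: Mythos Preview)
Your proof is correct and is precisely the intended argument: the paper defers to \cite{CB1} for this lemma, but the content is exactly what you wrote---define $h_k$ by division and read off the bound from Lemma~\ref{Uk}, noting $U_k \subset \{5R_k/4 \le |z| \le 4R_k\}$. There is nothing more to it.
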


\begin{corollary}
	\label{CriticalVk}
	$f'(z)$ is non-zero on $V_k$.
\end{corollary}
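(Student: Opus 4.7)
The plan is to differentiate the factored representation from Lemma 7.7 and show the only way $f'$ can vanish is if the error term in $f$ is extremely large, which we have already ruled out. Explicitly, on $U_k \supset V_k$ we have
$$f(z) = C_k \left(\frac{z}{R_k}\right)^{2n_k}(1+h_k(z))$$
with $\|h_k\|_{L^\infty(U_k)} = O\bigl((4/5)^{n_k} + R_k^{-1}\bigr)$. Differentiating and factoring,
$$f'(z) = \frac{C_k}{z}\left(\frac{z}{R_k}\right)^{2n_k}\Bigl[\,2n_k\bigl(1+h_k(z)\bigr) + z\, h_k'(z)\,\Bigr].$$
Since $z\neq 0$ on $V_k$ and $C_k\neq 0$, the prefactor is nonzero, so I only need to show the bracketed factor is nonzero.

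For the first term inside the bracket, the bound on $\|h_k\|$ gives $|1 + h_k(z)| \geq 1/2$ as soon as $R$ (hence $R_k$) and $N$ are large enough, so $|2n_k(1+h_k(z))| \geq n_k \geq 2^N$. For the second term, the key observation is that $V_k$ is compactly contained in $U_k$: reading off the radii,
$$\dist(V_k, \partial U_k) = \min\bigl(\tfrac{3}{2}R_k - \tfrac{5}{4}R_k,\; 3R_k - \tfrac{5}{2}R_k\bigr) = \tfrac{R_k}{4}.$$
Cauchy's estimate applied to $h_k$ on a disk of radius $R_k/4$ around any $z\in V_k$ therefore yields
$$|h_k'(z)| \leq \frac{4}{R_k}\|h_k\|_{L^\infty(U_k)} = O\!\left(\frac{(4/5)^{n_k} + R_k^{-1}}{R_k}\right),$$
and since $|z|\leq \tfrac{5}{2}R_k$ on $V_k$, we get $|z\,h_k'(z)| = O\bigl((4/5)^{n_k} + R_k^{-1}\bigr)$.

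Combining these, for $R$ large and $N\geq 10$,
$$\bigl|\,2n_k(1+h_k(z)) + z\,h_k'(z)\,\bigr| \geq n_k - O\bigl((4/5)^{n_k} + R_k^{-1}\bigr) > 0,$$
so $f'(z)\neq 0$ on $V_k$, proving the corollary. The only mildly delicate point is making the Cauchy estimate work with the right constant — the placement of the radii in the definitions of $U_k$ and $V_k$ is precisely what allows the tiny bound on $h_k$ to dominate after differentiation, so there is no essential obstacle, just bookkeeping on the ratios of radii.
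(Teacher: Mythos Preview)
Your proof is correct and follows essentially the same approach the paper indicates: the paper defers the argument to \cite{CB1} but notes in the remark immediately following the corollary that the proof uses precisely the smallness of $|h_k|\le \epsilon_k$ on $U_k$, which is exactly what you exploit via differentiation of the factored form and a Cauchy estimate on $V_k\Subset U_k$.
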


We conclude with a remark about the functions $h_k(z)$. First define 
$$\epsilon_k = C \left( \left( \frac{3}{4} \right)^{n_k} + R_k^{-1} \right).$$
The constant $C > 0$ is chosen so that $|h_k(z)| \leq \epsilon_k$ on the annuli $U_k$. It follows that $\sum \epsilon _k$ can be made arbitrarily small, given that $N$ and $R$ are sufficiently large. We need this fact in the proof of Corollary $\ref{CriticalVk}$ and in the proof of Theorem $\ref{C1}$ in the next section. If we assume $N > 10$ is chosen large enough so that these results hold, we may consider such a value $N$ fixed for the remainder of this paper. Moreover, choosing $R$ larger later on will not affect the choice of $N$ needed.

\section{The Mapping Behavior on Annuli}
We record the following useful corollary from the results of the previous section.
\begin{corollary}
	For all $k \geq 1$ we have
	$$\frac{1}{8} \leq \frac{R_{k+1}}{|C_k|\cdot2^{2n_k}} \leq 8.$$
	\label{RkCk}
\end{corollary}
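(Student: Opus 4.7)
The plan is to read off the corollary from the defining relation $R_{k+1} = \max_{|z|=2R_k}|f_k(z)|$ together with the precise description of $f$ on the annulus $U_k$ given by Lemma 7.7. The point is that the circle $|z|=2R_k$ lies inside $U_k = \{5R_k/4 \leq |z| \leq 3R_k\}$, which is the very annulus where $f$ was shown to behave like a monomial of the form $C_k(z/R_k)^{2n_k}$ up to a small holomorphic error.

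First I would note that on $|z|=2R_k$ we have $|(z/R_k)^{2n_k}| = 2^{2n_k}$ identically, so Lemma 7.7 gives
\begin{equation*}
|f(z)| \;=\; |C_k| \cdot 2^{2n_k}\, |1+h_k(z)|, \qquad |h_k(z)| \leq \epsilon_k,
\end{equation*}
and hence $|C_k|2^{2n_k}(1-\epsilon_k) \leq |f(z)| \leq |C_k|2^{2n_k}(1+\epsilon_k)$ on this circle. Since $N$ and $R$ have been arranged so that $\sum \epsilon_k$ is as small as desired, in particular $\epsilon_k \leq 1/4$ say, these already sandwich $|f|$ between constant multiples of $|C_k|2^{2n_k}$.

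Second, I would convert this estimate for $f$ into one for $f_k$. Since the circle $|z|=2R_k$ is inside $A_k$, Lemma 5.3 yields $\prod_{j=k+1}^\infty F_j(z) = 1 + O(R_k^{-1})$ there, and because $f(z) = f_k(z)\cdot \prod_{j>k}F_j(z)$, we obtain $|f_k(z)| = |f(z)|\cdot(1+O(R_k^{-1}))$ uniformly on $|z|=2R_k$. Taking the maximum and using $R_{k+1} = \max_{|z|=2R_k}|f_k(z)|$ gives
\begin{equation*}
|C_k|\,2^{2n_k}(1-\epsilon_k)(1-O(R_k^{-1})) \;\leq\; R_{k+1} \;\leq\; |C_k|\,2^{2n_k}(1+\epsilon_k)(1+O(R_k^{-1})).
\end{equation*}

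Finally, I would absorb the multiplicative errors: by Corollary 5.2, $R_k \geq (2R)^{2^{(k-1)N}}$ grows so fast that $O(R_k^{-1}) \leq O(R^{-1})$ uniformly in $k$, and by construction $\epsilon_k$ is uniformly small. Provided $R$ (and $N$) are chosen large enough so that $(1+\epsilon_k)(1+O(R_k^{-1})) \leq 8$ and $(1-\epsilon_k)(1-O(R_k^{-1})) \geq 1/8$ for every $k \geq 1$, the double inequality follows. No essential obstacle is expected; the only thing to watch is that the required smallness of $\epsilon_k + R_k^{-1}$ be compatible with the standing assumptions on $N,R$, which is already guaranteed by the parameter choices made at the end of Section 7.
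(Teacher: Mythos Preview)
Your argument is correct and is essentially the paper's own proof: both combine the monomial approximation $f(z)\approx C_k(z/R_k)^{2n_k}$ valid on the circle $|z|=2R_k$ with the tail estimate $f=f_k\cdot(1+O(R_k^{-1}))$ to evaluate $R_{k+1}=\max_{|z|=2R_k}|f_k(z)|$. The only cosmetic difference is that you invoke the $U_k$ version with the error term $h_k$ (which is Lemma~7.9 in the paper's numbering), whereas the paper cites the equivalent Lemma~7.7 on $\{5R_k/4\le|z|\le 4R_k\}$; the circle $|z|=2R_k$ lies in both regions, so this makes no difference.
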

\begin{proof}
	By the first part of the proof of Lemma \ref{A_k} we have for $z \in A_k$ that
	$$f(z) = f_{k}(z) (1 + O(R_k^{-1})).$$
	Recall that $f_{k}$ was the $k$th partial product of the infinite product defining $f$. If we further assume $|z| = 2R_k$, Lemma $7.7$ applies and
	$$\max_{|z|=2R_k} | f_k(z) | = \max_{|z|=2 R_k}  |C_k| \cdot\left( \frac{2R_k}{ R_k}\right)^{2n_k} (1+O(4/5)^{n_k})\cdot(1+O(R_k^{-1})).$$
	By Lemma $2.6$, and perhaps choosing $N$ and $R$ larger, 
	$$\frac{1}{8}   R_{k+1} \leq  |C_k| 2^{2n_k} \leq 8   R_{k+1}.$$
\end{proof}

With Corollary \ref{RkCk} and the estimates of the previous section, the following theorem follows in the exact same way as in Sections $11$ and $12$ of \cite{CB1}.

\begin{theorem}
	\label{itinerary}
	For $N > 10$ and sufficiently large $R$, we have that $A_{k+1} \subset f(V_k) \subset f(A_k)$ and $f(B_k) \subset B_{k+1}$. Moreover, $f$ maps the outermost boundary component of $V_k$ into $B_{k+1}$ and the innermost boundary component into $B_{k}$.
\end{theorem}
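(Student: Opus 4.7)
The key tool is the monomial approximation from Section~7: on $U_k\supset V_k$ we have $f(z)=C_k(z/R_k)^{2n_k}(1+h_k(z))$ with $|h_k(z)|\leq\epsilon_k$ made arbitrarily small by choosing $N$, $R$ large, and Corollary~\ref{RkCk} gives $|C_k|\cdot 2^{2n_k}\asymp R_{k+1}$ up to a factor of~$8$. Hence on $V_k$, $f$ is a small perturbation of the proper degree-$2n_k$ map $F(z):=C_k(z/R_k)^{2n_k}$, whose image is the round annulus with inner modulus $\asymp R_{k+1}(3/4)^{2n_k}$ and outer modulus $\asymp R_{k+1}(5/4)^{2n_k}$. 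Because $n_k\geq 2^{N-1}\geq 2^{9}$, these two moduli are astronomically smaller than $R_{k+1}/4$ and astronomically larger than $4R_{k+1}$ respectively, so the target $A_{k+1}$ lies deep in the interior of $F(V_k)$.

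\textbf{The two inclusions involving $V_k$ and $B_k$.} For $A_{k+1}\subset f(V_k)$, fix $w_0\in A_{k+1}$. On each component of $\partial V_k$ the separation $|F(z)-w_0|$ is a definite fraction of $|F(z)|$ (by the gap estimate above), while $|f(z)-F(z)|=|F(z)||h_k(z)|\leq \epsilon_k|F(z)|$, so the error is dwarfed by the separation. Rouché's theorem on $\partial V_k$ then shows that $f(z)-w_0$ has the same number of zeros inside $V_k$ as $F(z)-w_0$, namely $2n_k$, so $w_0\in f(V_k)$. The inclusion $f(V_k)\subset f(A_k)$ is immediate from $V_k\subset A_k$. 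For $f(B_k)\subset B_{k+1}$, using the $B_k$-formula of Section~7, on $|z|=4R_k$ we get $|f(z)|\asymp R_{k+1}\cdot 2^{2n_k}\gg 4R_{k+1}$, while on $|z|=R_{k+1}/4$ we get $|f(z)|\asymp|C_k|\,(R_{k+1}/(4R_k))^{2n_k}$, which Lemma~\ref{Rk} applied at level $k+1$ bounds above by $R_{k+2}/4$.

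\textbf{Boundary components of $V_k$.} The innermost boundary $|z|=3R_k/2$ yields $|f(z)|\asymp R_{k+1}(3/4)^{2n_k}$. The upper bound $|f(z)|\leq R_{k+1}/4$ is immediate since $(3/4)^{2n_k}\leq 1/4$; the lower bound $|f(z)|\geq 4R_k$ reduces to $\log R_{k+1}\gtrsim \log R_k + 2n_k\log(4/3)$, easily implied by Lemma~\ref{Rk}. So this boundary maps into $B_k$. Similarly the outermost boundary $|z|=5R_k/2$ yields $|f(z)|\asymp R_{k+1}(5/4)^{2n_k}$, which sits inside $[4R_{k+1},R_{k+2}/4]$ by the same growth-rate estimate applied at level $k+1$. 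So this boundary maps into $B_{k+1}$.

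\textbf{Principal obstacle.} The one delicate point is the competition between the exponential factors $(3/4)^{2n_k}$ and $(5/4)^{2n_k}$ arising from the annular boundaries on the one hand, and the growth of $R_k$ on the other. Both worst-case inequalities reduce to checking that $\log R_{k+1}$ dominates $2n_k\log(5/3)$ with room to spare, which follows comfortably from the doubly-exponential lower bound $R_k\geq (2R)^{2^{(k-1)N}}$ of Corollary~\ref{Rk2} against the merely singly-exponential $n_k=2^{N+k-1}$. Once those growth estimates are in hand, every remaining assertion in the theorem is a direct consequence of the Section~7 monomial approximations plus the single application of Rouché above.
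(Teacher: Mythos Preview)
Your overall strategy is exactly the one the paper intends: the paper does not give a self-contained argument but states that the theorem follows from Corollary~\ref{RkCk} together with the monomial approximations of Section~7, deferring the details to Sections~11--12 of~\cite{CB1}. Your use of the power-map approximation on $U_k$, the $B_k$-formula, Corollary~\ref{RkCk}, and a Rouch\'e argument on $\partial V_k$ to capture $A_{k+1}$ inside $f(V_k)$ is precisely this route, and your boundary-circle computations for $\partial V_k$ are correct.

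There is, however, one genuine gap. For the inclusion $f(B_k)\subset B_{k+1}$ you must show $|f(z)|\le R_{k+2}/4$ on the outer edge $|z|=R_{k+1}/4$, and you assert that the quantity $|C_k|\,(R_{k+1}/(4R_k))^{2n_k}$ is bounded by $R_{k+2}/4$ via ``Lemma~\ref{Rk} applied at level $k+1$.'' But Lemma~\ref{Rk} furnishes only a \emph{lower} bound $R_{k+2}\ge 2^{n_{k+1}}R_{k+1}^{2^{N-1}+n_k}$, whereas after inserting $|C_k|\asymp R_{k+1}/4^{n_k}$ your expression carries a factor $R_{k+1}^{2n_k+1}$. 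Since $2n_k+1>2^{N-1}+n_k$ for every $k\ge 1$, your expression grows faster in $R_{k+1}$ than the Lemma~\ref{Rk} bound for $R_{k+2}$ does, so a lower bound on $R_{k+2}$ alone cannot close the inequality; one needs two-sided control tying $R_{k+1}$ and $R_{k+2}$ together. The clean fix is to use the explicit formula in Lemma~\ref{A_k}, which gives $|C_{k+1}|=\tfrac{1}{2}|C_k|(R_{k+1}/R_k)^{2n_k}$, whence
\[
|C_k|\Bigl(\frac{R_{k+1}}{4R_k}\Bigr)^{2n_k}
=\frac{2|C_{k+1}|}{4^{2n_k}}
=\frac{2|C_{k+1}|}{4^{n_{k+1}}},
\]
and now Corollary~\ref{RkCk} at level $k+1$ yields $|C_{k+1}|\le 8R_{k+2}/4^{n_{k+1}}$, so the whole thing is at most $16R_{k+2}/4^{2n_{k+1}}\ll R_{k+2}/4$. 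Equivalently, one may simply observe that $|z|=R_{k+1}/4$ lies in $A_{k+1}$ and invoke the inner-$A_{k+1}$ approximation there directly. Either way it is Corollary~\ref{RkCk}, not Lemma~\ref{Rk}, that supplies the missing control.
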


\begin{corollary}
	Each set $B_k$ is in the Fatou set of $f$.
\end{corollary}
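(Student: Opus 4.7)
The plan is very short. By Theorem \ref{itinerary} we already know that $f(B_k) \subset B_{k+1}$. I would first iterate this inclusion: by induction, $f^n(B_k) \subset B_{k+n}$ for every $n \geq 0$.

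Next, recall that $B_{k+n} = \{z : 4R_{k+n} \leq |z| \leq R_{k+n+1}/4\}$, so in particular every point of $B_{k+n}$ has modulus at least $4R_{k+n}$. Combining this with step one, for any $z \in B_k$ we have
\begin{equation*}
|f^n(z)| \geq 4 R_{k+n}.
\end{equation*}
By Corollary \ref{Rk2} (the super-exponential lower bound $R_{k+1} \geq (2R)^{2^{kN}}$), the sequence $R_{k+n}$ tends to $\infty$ as $n \to \infty$, so $f^n \to \infty$ uniformly on all of $B_k$, hence locally uniformly on $B_k$.

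Finally, a family of holomorphic functions on an open set that converges locally uniformly to $\infty$ (viewing the target as the Riemann sphere) is automatically a normal family there. Therefore $\{f^n\}$ is normal on $B_k$, which is exactly the statement that $B_k \subset \mathcal{F}(f)$. There is no real obstacle; the whole content is packaged in Theorem \ref{itinerary} and the divergence $R_k \to \infty$, both of which have already been established.
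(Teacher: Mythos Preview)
Your proof is correct and is exactly the approach the paper takes; the paper's one-line proof (``Since $B_k$ maps into $B_{k+1}$ we know that the iterates tend to infinity uniformly'') is just a compressed version of what you wrote out.
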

\begin{proof}
	Since $B_k$ maps into $B_{k+1}$ we know that the iterates tend to infinity uniformly.
\end{proof}

\section{Partitioning the Julia And Fatou Set}
In this section, we describe a system for cataloging the various components of the Julia set and the Fatou set. We will also discuss the conformal mapping properties of $f$, which will be vital to our analysis in the upcoming sections.

Consider the trajectory of a point $z$ with $|z|< R_1/4  $, assuming that $z \notin K_{f}$, the filled Julia set of the polynomial-like mapping $f$. There exists a natural number $M$ so that $|f^{M}(z)| > R_1$, so $z$ is iterated into some annulus $A_1$ or $B_1$. If $z$ is iterated into $B_1$, the point stays in future $B_k$'s for all subsequent iterates. It is also possible that $z$ is iterated into $A_1$ but is still iterated into $B_k$ for some $k$. As discussed earlier, such a $z$ is always in the Fatou set.

The more interesting case is when the orbit of $z$ is disjoint from $B_k$ for all $k$. There are two subcases to consider. For the first case, define 
$$C = \bigcup_{n=0}^{\infty} f^{-n}(K_f).$$
$C$ is the set of all points that eventually map into the filled Julia set of the polynomial-like mapping $f$. If $z$ belongs to $C$, the orbit of $z$ is eventually contained in $K_f$. If $z$ is in the Julia set, the orbit will eventually be contained in $J_f$, and if $z$ is in the Fatou set, the orbit will eventually be contained inside of the attracting basin $B_f$.

The other case is that $z \notin C$. It may not be true that the orbit of $z$ is eventually contained inside of the $A_k$'s for $k \geq 1$. Indeed, the zeros of $f$ are contained in the annular regions $A_k$, so it is possible (and often happens!) that points can be iterated from $A_k$ to $A_j$ for $j < k$. It is possible that points land back in the region between the inner boundary of $A_1$ and $K_f$. It will be useful to label this region by pulling back the annulus $A_1$. First define
$$A_0 = \{|z| \leq 1/4R_1\,:\, f(z) \in A_1\}.$$
$$A_{-k} = \{|z| \leq 1/4 R_1 \,:\, f^N(z) \in |z| \leq 1/4R_1,\, N = 1,2,\dots,k-1,\, f^k(z) \in A_1\}.$$
After $k+1$ steps, $f$ maps $A_{-k}$ into $A_1$. Therefore, we may describe this final case of orbits as the set of all $z$ whose orbit is contained in 
$$A := \bigcup_{k \in \bZ} (A_k \setminus C).$$
From the definition of $A$, it is apparent that $f^{-1}(A) = A$.

The next few lemmas describe the dynamical behavior of points in $A$. 
\begin{lemma}
	Let $W$ be a connected component of $f^{-1}(A_k)$ for $k \in \bZ$. Then we have the following possibilities.
	\begin{enumerate}
		\item $W \subset A_{k-1}$.
		\item $W \subset A_j$ for $j \geq k$. All possible $j$ occur.
	\end{enumerate}
\end{lemma}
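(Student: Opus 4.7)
The plan is to prove the lemma in three steps: first rule out $W$ from any escaping annulus $B_j$, then localize $W$ to a single annular region, and finally rule out the impossible indices using the explicit approximation formulas for $f$ from Section 7.

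For the first step, Theorem~\ref{itinerary} gives $f(B_j) \subset B_{j+1}$, and since the interiors of the $A_k$ are disjoint from those of the $B_{j+1}$, no point in the interior of $B_j$ can map into the interior of $A_k$. Hence the interior of $W$ avoids every $B_j$. The complement of $\bigcup_j B_j$ (taking interiors) decomposes into disjoint open pieces $\{|z| < R_1/4\}$ and the open annuli $A_j$ for $j \geq 1$; connectedness then forces $W$ to lie in the closure of exactly one such piece.

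For the main step, suppose first $W \subset A_j$ with $j \geq 1$. Using the approximation $f(z) = C_j H_{n_j}(z/R_j)(1 + O(R_j^{-1}))$ from Lemma~\ref{A_k} and $|C_j| \sim R_{j+1}/2^{2n_j}$ from Corollary~\ref{RkCk}, the mapping $f|_{A_j}$ splits into two regimes: the $n_j$ petals of $H_{n_j}(z/R_j)$ each map conformally onto (approximately) a disk of radius $|C_j|$, while the outer portion of $A_j$ maps like the power function $-C_j(z/R_j)^{2n_j}$, covering the complement of that disk. For $j \leq k-2$, the maximum modulus of $f$ on $A_j$ is comparable to $R_{j+1} \cdot 2^{2n_j}$, which by the superexponential growth in Corollary~\ref{Rk2} falls far below $R_k/4$; hence $f(A_j) \cap A_k = \emptyset$ and this case is excluded. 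For $j = k-1$, Theorem~\ref{itinerary} yields $A_k \subset f(V_{k-1}) \subset f(A_{k-1})$, so the outer portion of $A_{k-1}$ contributes preimage components located in $V_{k-1} \subset A_{k-1}$, which is case (1). For $j \geq k$, the inequality $4R_k \leq |C_j|$ follows easily from $R_{j+1} \geq R_k$ together with the growth estimates on the $R_i$, so each petal's image contains $A_k$ and thus produces a preimage component inside $A_j$, which is case (2) and simultaneously verifies that every $j \geq k$ is attained.

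It remains to handle $W \subset \{|z| < R_1/4\}$. Here the polynomial-like behavior of $f$ from Section 6 gives $f \approx f_0^N$ on the disk, and hence $|f|$ is bounded by a constant that is much smaller than $R_k/4$ for any $k \geq 2$; so for such $k$ no preimage component of $A_k$ sits inside the disk. For $k = 1$, the definition $A_0 = \{|z| \leq R_1/4 : f(z) \in A_1\}$ immediately gives $W \subset A_0 = A_{k-1}$. For $k \leq 0$, the recursive definition of $A_{-j}$ combined with $f(A_{-j}) \subset A_{-j+1}$ and the interior disjointness of these preimage annuli forces $W \subset A_{k-1}$. I expect the main obstacle to be verifying that the approximation error in $f \approx C_j H_{n_j}(z/R_j)$ is small enough for the topological petal structure of $H_{n_j}$ to transfer faithfully to the components of $f^{-1}(A_k)$ inside $A_j$; this should follow from the bounds on $h_k(z)$ at the end of Section 7 together with a Rouché-type argument.
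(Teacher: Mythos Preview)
Your argument is correct, but it is substantially more elaborate than the paper's. The paper's proof is three sentences: Theorem~\ref{itinerary} gives case~(1); for case~(2) it simply observes that each $A_j$ with $j\geq 1$ contains zeros of $f$, so by continuity $f$ takes every value between $0$ and its maximum on $A_j$, hence hits $A_k$; and to exclude $j\leq k-2$ it uses only $f(B_j)\subset B_{j+1}$ together with the maximum principle on $\{|z|\leq 4R_j\}$ to get $|f(z)|\leq R_{j+2}/4\leq R_k/4$. Your route through the explicit approximation $f\approx C_j H_{n_j}(z/R_j)$ and the petal geometry gives the same conclusions but at the cost of the Rouch\'e-type transfer you flag at the end, which the paper's soft argument avoids entirely. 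What your approach buys is the finer structural information (petal components versus the $2n_k$-to-$1$ annular cover) that the paper establishes separately in Lemmas~\ref{JuliaPlace}--\ref{distortion}; so your extra work is not wasted, merely premature for this lemma. Two small points: your appeal to Section~6 for $f\approx f_0^N$ on $\{|z|<R_1/4\}$ overshoots, since Lemma~\ref{polynomiallike} only covers $B(0,r)$; the bound you need on the full disk is cleanest via the maximum principle and Lemma~7.8 on the circle $|z|=R_1/4$. Also, to conclude that $W$ lies in a single $A_j$ or in the disk you implicitly need that the separating circles $|z|=R_1/4$ and $|z|=4R_j$ do not meet $f^{-1}(A_k)$, which again follows from the Section~7 estimates but deserves a sentence.
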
 
\begin{proof}
By Theorem $\ref{itinerary}$, we know that case $1$ can occur. The fact that all possible $j \geq k$ occurs follows from the fact that all the zeros of $f$ are in the annuli $A_j$ for $j \geq 1$, and the fact that $f$ is continuous. To see that no other cases occur, note that if $z \in A_{j}$ for $j \leq k-2$, then $|f(z)| \leq 1/4 R_k$, since $f(B_{j}) \subset B_{j+1}$ for all $j$.
\end{proof}

The cases above are very different. In the first case listed in the lemma,
$f|_W: W \ra A_k$ is a $2n_k$ to $1$ covering map. This follows from Lemma $\ref{A_k}$. If $W \subset A_j$ for $j > k-1$, then $f|_W: W \ra A_k$ is a conformal mapping. To prove this, we need to show $W$ doesn't contain any critical points. This follows from the following series of observations. First recall that $\Omega_{n_k}^p$ is a ``petal" region where $H_{n_k}(z) = z^{n_k}(2-z^{n_k})$ restricts to be a conformal mapping onto the disk. The following is Lemma $13.1$ in \cite{CB1}:
\begin{lemma}
	\label{JuliaPlace}
	We have $(\cJ(f) \cap A_k )  \subset V_k \cup (R_k \cdot \Omega_{n_k}^p)$ for all $k$. 
\end{lemma}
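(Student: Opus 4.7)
The plan is to establish the contrapositive: any $z \in A_k$ lying outside $V_k \cup R_k \Omega^p_{n_k}$ belongs to the Fatou set. Because each $B_j$ is contained in the Fatou set by Corollary 8.3, and the Fatou set is backward invariant under $f$, it suffices to exhibit some $j$ with $f(z) \in B_j$. The main inputs are the approximation $f(z) = C_k H_{n_k}(z/R_k)(1 + O(R_k^{-1}))$ on $A_k$ from Lemma \ref{A_k}, together with the comparison $|C_k| 2^{2n_k} \asymp R_{k+1}$ from Corollary \ref{RkCk}; together these convert bounds on $|H_{n_k}(z/R_k)|$ into size bounds on $|f(z)|$ that can be tested against the thresholds $4 R_k$, $R_{k+1}/4$, and $4 R_{k+1}$ separating consecutive $A_j$ and $B_j$.

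The set $\bC \setminus \{|H_{n_k}(z/R_k)| = 1\}$ has an unbounded component, a component $G$ containing the origin, and the $n_k$ petals making up $R_k \Omega^p_{n_k}$. Once the petals are excised, I would split $A_k \setminus (V_k \cup R_k \Omega^p_{n_k})$ into three pieces and handle each separately: (i) the outer ring $\{5R_k/2 < |z| \leq 4R_k\}$, (ii) the inner ring $\{R_k/4 \leq |z| < 3R_k/2\}$ intersected with $G$, and (iii) the inner ring intersected with the unbounded component of the level set.

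In region (i), Lemma \ref{Uk} gives $|f(z)|$ comparable to $|C_k|(5/2)^{2n_k} \asymp R_{k+1}(25/16)^{n_k}$, which easily exceeds $4 R_{k+1}$ since $n_k \geq 2^{10}$; so $f(z) \in B_{k+1}$. In region (ii), $|H_{n_k}(z/R_k)| < 1$ by definition of $G$, and the crude product bound $|H_{n_k}(z/R_k)| = |(z/R_k)^{n_k}|\,|2-(z/R_k)^{n_k}| \geq |z/R_k|^{n_k} \geq (1/4)^{n_k}$ (using $|z/R_k| \geq 1/4$ and $|z/R_k| \leq 1$ in $G$) provides the lower bound, so $|f(z)|$ lies in an interval with endpoints comparable to $R_{k+1}/16^{n_k}$ and $R_{k+1}/4^{n_k}$; using the growth $R_{k+1} \geq 2^{n_k} R_k^{2^{N-1}+n_{k-1}}$ from Lemma \ref{Rk}, this interval is contained in $(4R_k, R_{k+1}/4)$ for $R,N$ large, so $f(z) \in B_k$. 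In region (iii), $|H_{n_k}(z/R_k)| \geq 1$ forces $|f(z)| \geq |C_k| \geq 8R_k$ (from the lower bound $|C_k| \geq 8R_k$ established in Section 7), while $|z/R_k| < 3/2$ forces $|H_{n_k}(z/R_k)| \leq 2(3/2)^{2n_k}$ and hence $|f(z)| \leq 2R_{k+1}(9/16)^{n_k} < R_{k+1}/4$; again $f(z) \in B_k$.

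The only point requiring care is checking that the growth of $R_{k+1}$ relative to $R_k$ in Lemma \ref{Rk} really dominates the losses $16^{n_k}$ in region (ii) and $(16/9)^{n_k}$ in region (iii); this is straightforward because under $N \geq 10$ the exponent $2^{N-1}+n_{k-1}$ is already huge and the initial $R$ can be taken large. Region (ii) is the tightest, and is the only place where the rapid growth built into the definition of the $R_k$ is genuinely needed; the other two regions have substantial slack.
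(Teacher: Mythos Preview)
The paper does not give its own proof of this lemma; it simply cites Lemma~13.1 of \cite{CB1}. Your argument is correct and is essentially the approach taken there: use the approximation $f(z)\approx C_k H_{n_k}(z/R_k)$ on $A_k$ from Lemma~\ref{A_k}, together with the bounds on $|C_k|$ and the growth of $R_{k+1}$, to show that every point of $A_k\setminus(V_k\cup R_k\Omega^p_{n_k})$ is sent by $f$ into some $B_j$ and hence lies in the Fatou set by Corollary~8.3. Your observation that the origin component $G$ of $\{|H_{n_k}|<1\}$ is contained in $\{|w|<1\}$ (which justifies the bound $|2-w^{n_k}|\geq 1$ in region~(ii)) is correct: on $|w|=1$ one has $|H_{n_k}(w)|=|2-w^{n_k}|\geq 1$, so the connected open set $G\ni 0$ cannot cross the unit circle.

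One small omission: in region~(i) you verify only the lower bound $|f(z)|>4R_{k+1}$, but membership in $B_{k+1}$ also requires $|f(z)|\leq R_{k+2}/4$. This is immediate from $|f(z)|\lesssim |C_k|\cdot 16^{n_k}\asymp R_{k+1}\cdot 4^{n_k}$ together with $R_{k+2}\geq R_{k+1}^{2}$ from Corollary~\ref{Rk2}, so the gap is harmless.
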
 

The next important lemma says that the diameter of the portion of the Julia set inside the petal $R_k \cdot \Omega_{n_k}^p$ is much smaller than the diameter of that petal. This lemma is deduced on page $35$ of \cite{CB1} as a consequence of the proof Lemma $\ref{JuliaPlace}.$ 

\begin{lemma}
	\label{diam}
	The diameter of the portion of the Julia set contained in each petal $R_k \cdot \Omega_{n_k}^p$ is $O(R_k^{-2} /2^{n_k})$
\end{lemma}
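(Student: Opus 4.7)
My strategy is to exploit the representation $f(z) = C_k H_{n_k}(z/R_k)(1+O(R_k^{-1}))$ from Lemma~\ref{A_k}, under which the restriction of $f$ to the petal $R_k \Omega_{n_k}^p$ is, up to a small multiplicative error, a conformal mapping onto the disk $C_k\bD$. The plan is to first show that within this target disk, $\cJ(f)$ is confined to the much smaller sub-disk $\{|w|\leq 4R_k\}$, and then to pull that constraint back through the approximately conformal structure using the Koebe distortion theorem.

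For the localization step, note that $|H_{n_k}(z/R_k)|\leq 1$ on the petal, so $f(R_k\Omega_{n_k}^p)$ sits inside a disk of radius $(1+O(R_k^{-1}))|C_k|$. By Corollary~\ref{RkCk}, $|C_k|\leq 8R_{k+1}/2^{2n_k}< R_{k+1}/4$, so this disk lies strictly inside the inner boundary of $A_{k+1}$ and meets no annulus of index at least $k+1$. On the other hand, the corollary to Theorem~\ref{itinerary} states that each $B_k$ is in the Fatou set, so any point of $\cJ(f)$ in $C_k\bD$ must avoid the slice $\{4R_k\leq |w|\leq |C_k|\}\subset B_k$. Combining these, if $z\in \cJ(f)\cap R_k\Omega_{n_k}^p$ then $|f(z)|\leq 4R_k$.

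For the Koebe step, let $\phi:\bD\to \Omega_{n_k}^p$ be the conformal inverse of $H_{n_k}|_{\Omega_{n_k}^p}$. The unique zero of $H_{n_k}$ in the petal is $\phi(0)=2^{1/n_k}\omega$ for a suitable $n_k$-th root of unity $\omega$, and a direct computation with $H_{n_k}(z) = 2z^{n_k}-z^{2n_k}$ shows $|H_{n_k}'(\phi(0))|=\Theta(n_k)$, so $|\phi'(0)|=O(1/n_k)$. Writing $z=R_k\phi(u)$ and inverting the formula for $f$, the bound $|f(z)|\leq 4R_k$ translates to $|u|\leq 5R_k/|C_k|$, which is far less than $1/2$ by the estimate $|C_k|\geq R_k^{n_{k-1}}/2^k$ established just after Lemma~\ref{A_k}. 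Since $|u|$ is this small, Koebe distortion gives
\[
|\phi(u)-\phi(0)|\;\leq\; C|u||\phi'(0)|\;\leq\; \frac{CR_k}{n_k|C_k|},
\]
and scaling by $R_k$ yields $\diam(\cJ(f)\cap R_k\Omega_{n_k}^p)\leq CR_k^2/(n_k|C_k|)$.

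Plugging the lower bound $|C_k|\geq R_k^{n_{k-1}}/2^k$ in once more produces $\diam\leq C\cdot 2^k/(n_k R_k^{n_{k-1}-2})$. Since the exponent $n_{k-1}-2\geq 2^{N-1}-2$ is enormous under our standing assumption $N\geq 10$, and $R_k$ grows super-exponentially in $k$ by Corollary~\ref{Rk2}, this quantity is comfortably dominated by $R_k^{-2}/2^{n_k}$, which completes the argument. The main obstacle is the localization step: it requires both sides of the two-sided estimate on $|C_k|$ in Corollary~\ref{RkCk} (via the strict inequality $|C_k|<R_{k+1}/4$) together with the fact that $B_k\subset\cF(f)$; once these are in hand the remainder is a clean Koebe computation backed by the tail estimates for $R_k$ already developed in Section~5.
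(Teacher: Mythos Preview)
Your argument is correct. The paper itself does not give a proof but defers to \cite{CB1}, so there is no in-paper argument to compare against; nevertheless your route---localize $f(\cJ(f)\cap R_k\Omega_{n_k}^p)$ to $\{|w|\le 4R_k\}$ using forward invariance together with $B_k\subset\cF(f)$ and the bound $|C_k|\ll R_{k+1}/4$, then pull this back through the conformal petal map via Koebe and the derivative estimate $|H_{n_k}'(2^{1/n_k}\omega)|\asymp n_k$---is exactly the natural one and is what the cited reference does. One small remark: your final comparison $2^k/(n_k R_k^{n_{k-1}-2})=O(R_k^{-2}/2^{n_k})$ is stated a bit briskly; it amounts to $R_k^{\,n_{k-1}-4}\gtrsim 2^{k+n_k}$, which does follow from $n_{k-1}\ge 2^{N-1}$ and $R_k\ge 2R$ (already for $k=1$ one has $(2R)^{n_0-4}\ge (2R)^{508}\gg 2^{1+n_1}$), but it is worth writing that line out explicitly.
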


The final lemma characterizes the dynamics of the critical points of $f$. Recall that $B_f$ is the basin of attraction for the polynomial-like mapping $f$. The following is Lemma $14.2$ in \cite{CB1}.

\begin{lemma}
All critical points of $f$ are either contained in $B_f$ or $A_k$ for some $k$. If $z$ is a critical point contained in $A_k$, then $f(z) \in B_k$ and the distance from $z$ to the Julia set is at worst comparable to $R_k/m_k$. In both cases, $z$ is in the Fatou set.
\end{lemma}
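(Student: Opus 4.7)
The plan is to establish four facts: (a) every critical point of $f$ lies in $B_f$ or in some annulus $A_k$; (b) if $z \in A_k$ is a critical point, then $f(z) \in B_k$; (c) $\dist(z, \cJ(f)) = O(R_k/n_k)$; and (d) $z$ lies in the Fatou set. The main tools will be the approximation $f(z) = C_k H_{n_k}(z/R_k)(1 + O(R_k^{-1}))$ on $A_k$ from Lemma \ref{A_k}, the explicit power-law description of $f$ on $B_k$, Corollary \ref{CriticalVk}, and the petal geometry of $H_{n_k}$ from Section 9.

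For (a), inside the disk $B(0,r)$ on which $f$ is polynomial-like, the critical points of $f$ coincide with those of this polynomial-like map, and hyperbolicity places them all in $B_f$. On each annulus $B_k$ the approximation $f(z) = -C_k(z/R_k)^{2n_k}(1+g(z))$ with $|g|$ very small gives
\[
f'(z) = -\frac{C_k \cdot 2n_k}{R_k}\left(\frac{z}{R_k}\right)^{2n_k-1}(1+g(z)) - C_k\left(\frac{z}{R_k}\right)^{2n_k} g'(z),
\]
and a Cauchy estimate for $g'$ on a sub-annulus shows the first term dominates, so $f' \neq 0$ on $B_k$. Combined with Corollary \ref{CriticalVk}, any critical point outside $B(0,r)$ must lie in $A_k \setminus V_k$ for some $k$.

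For (b)--(d), on $A_k$ writing $f(z) = C_k H_{n_k}(z/R_k)(1+g(z))$ with $g = O(R_k^{-1})$, the equation $f'(z) = 0$ rearranges to
\[
\frac{H_{n_k}'(z/R_k)}{R_k\, H_{n_k}(z/R_k)} = -\frac{g'(z)}{1+g(z)},
\]
and by a Cauchy estimate on a sub-annulus of $A_k$ the right-hand side is $O(R_k^{-2})$. Since $H_m'(w)/H_m(w) = (2m/w)(1-w^m)/(2-w^m)$, this forces $(z/R_k)^{n_k}$ to be very close to $1$, so $z$ lies near a scaled $n_k$th root of unity $R_k\omega$. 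At such a $z$, $H_{n_k}(z/R_k) \approx H_{n_k}(\omega) = 1$, so $f(z) \approx C_k$. Combining the bound $|C_k| \geq R_k^{n_{k-1}}/2^k \geq 4R_k$ from Lemma 7.3 with $|C_k| \leq 8R_{k+1}/4^{n_k} \leq R_{k+1}/4$ from Corollary \ref{RkCk} shows $f(z) \in B_k$, giving (b). For (c), the critical points cluster near the scaled roots of unity on the circle $|z|=R_k$, which lie on the boundaries of the petals $R_k \cdot \Omega_{n_k}^p$; by Lemma \ref{JuliaPlace} each such petal meets $\cJ(f)$, and each petal has diameter $O(R_k/n_k)$, so $\dist(z, \cJ(f)) = O(R_k/n_k)$. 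Finally, (d) is immediate: for $z \in B_f$ the point is in the Fatou set by the polynomial-like dynamics, while for $z \in A_k$ we have $f(z) \in B_k$ which lies in the Fatou set by Corollary 8.3, so $z$ does too by complete invariance.

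The main obstacle I anticipate is justifying the Cauchy-estimate step on a sub-annulus of $A_k$ large enough to contain all candidate critical points, and then using the explicit form of $H_m'/H_m$ to pin down how close the critical points of $f$ are to the scaled $n_k$th roots of unity. Once this localization is in hand, the remaining assertions assemble from the lower and upper bounds on $|C_k|$ already established and the petal diameter estimates from Section 9.
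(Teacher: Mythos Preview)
The paper does not actually prove this lemma; it simply cites it as Lemma~14.2 of \cite{CB1}. So there is no in-paper proof to compare against, and the question is whether your sketch stands on its own.

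Parts (a), (b), and (d) are essentially right, modulo one small omission in (a): you cover the polynomial-like disk $B(0,r)$ and the annuli $B_k$, but you say nothing about the shell $r \le |z| < R_1/4$. This is easily patched---on that region $f = f_0^N(1+\epsilon)$ with $\epsilon,\epsilon'$ tiny and all critical points of $f_0^N$ already in $B_{f_0}$, so a Rouch\'e/Hurwitz argument rules out new critical points---but it should be said.

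The real issue is (c). You prove $\dist(z,\cJ(f)) = O(R_k/n_k)$, i.e.\ the \emph{upper} bound, by observing that the critical point and a piece of $\cJ(f)$ both sit in a petal of diameter $O(R_k/n_k)$. But the phrase ``at worst comparable to $R_k/n_k$'' means the \emph{lower} bound $\dist(z,\cJ(f)) \gtrsim R_k/n_k$, and that is exactly what is used downstream in Lemma~\ref{distortion} to find a ball around the Julia set whose double contains no critical point. Your argument does not give this direction. To get it, you need to localize the Julia set \emph{within} the petal: by Lemma~\ref{diam} the Julia piece in $R_k\Omega_{n_k}^p$ has diameter $O(R_k^{-2}/2^{n_k})$ and is centered near the zero of $f$, i.e.\ near $(z/R_k)^{n_k}=2$, so at radius $|z|\approx R_k 2^{1/n_k}$. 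Your critical point sits near $(z/R_k)^{n_k}=1$, at radius $|z|\approx R_k$. The radial gap is $R_k(2^{1/n_k}-1)\asymp R_k/n_k$, and the portion of the Julia set in $V_k$ is even farther away (at least $R_k/2$). That is the computation you are missing.
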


We now know enough to show that if $W$ is a component of $f^{-1}(A_k)$ inside of $A_j$ for $j \geq k$, then $f: W \ra A_k$ is a conformal mapping with uniformly bounded conformal distortion. We make this sentence precise. First recall the following consequence of the Koebe distortion theorem.

\begin{lemma}
	\label{Kobe}
	Fix $r < 1$ and let $B = B(0,r) \subset \bD$ be an open ball and let $K \subset B$ be a compact set. Suppose that $f: \bD \ra \bC$ is conformal. Then there exists a constant $C = C_r$, independent of $f$, so that
	$$C_r^{-1} \frac{\diam(f(K))}{\diam(f(B))}\leq \frac{\diam(K)}{\diam(B)} \leq C_r \frac{\diam(f(K))}{\diam(f(B))}.$$	
\end{lemma}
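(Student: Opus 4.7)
The plan is to deduce the estimate from the classical Koebe distortion theorem for normalized univalent functions. First, I would observe that both ratios $\diam(f(K))/\diam(K)$ and $\diam(f(B))/\diam(B)$ are unchanged under replacing $f$ by its affine modification $(f - f(0))/f'(0)$, so I would normalize and assume $f(0) = 0$ and $f'(0) = 1$. Under this normalization, Koebe's distortion theorem provides constants $0 < c_r \leq C_r$ depending only on $r$ (obtained, if one wants the bound to extend to the closed ball, by applying Koebe at the slightly larger radius $(1+r)/2 < 1$) such that $c_r \leq |f'(z)| \leq C_r$ for all $z \in \overline{B(0,r)}$.

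Second, I would upgrade this pointwise derivative bound to a bi-Lipschitz estimate
$$c_r' |z_1 - z_2| \;\leq\; |f(z_1) - f(z_2)| \;\leq\; C_r |z_1 - z_2|, \qquad z_1, z_2 \in \overline{B(0,r)},$$
with $c_r', C_r$ depending only on $r$. The upper bound is immediate from integrating $f'$ along the straight segment joining $z_1$ to $z_2$. The lower bound is the main obstacle, since direct integration of $f'$ along a path can suffer cancellation and therefore does not give a lower bound. To obtain it, I would pre-compose $f$ by the disk automorphism $\phi(\zeta) = (\zeta + z_1)/(1 + \bar z_1 \zeta)$ sending $0$ to $z_1$, and then renormalize $f \circ \phi$ at the origin to obtain a second univalent function $h$ with $h(0) = 0$ and $h'(0) = 1$. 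Koebe's growth theorem yields $|h(\zeta_2)| \geq |\zeta_2|/(1+|\zeta_2|)^2$, where $\zeta_2 = \phi^{-1}(z_2)$. Since $z_1, z_2 \in B(0,r)$ lie at hyperbolic distance at most $2\log((1+r)/(1-r))$, a direct computation gives $|\zeta_2| \leq 2r/(1+r^2) < 1$, and the Mobius derivative of $\phi^{-1}$ shows that $|\zeta_2|$ is comparable to $|z_1 - z_2|$ with constants depending only on $r$. Combining this with the derivative bound $|f'(z_1)| \geq c_r$ from the first step unwinds the normalizations and yields the desired lower bi-Lipschitz constant $c_r' > 0$.

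Finally, the bi-Lipschitz control implies that for any subset $S \subset \overline{B(0,r)}$, one has $c_r' \diam(S) \leq \diam(f(S)) \leq C_r \diam(S)$. Applying this separately to $S = K$ and to $S = B$, and then forming the ratio of the two estimates, yields
$$\frac{c_r'}{C_r} \cdot \frac{\diam(K)}{\diam(B)} \;\leq\; \frac{\diam(f(K))}{\diam(f(B))} \;\leq\; \frac{C_r}{c_r'} \cdot \frac{\diam(K)}{\diam(B)},$$
so one may take $C = C_r/c_r'$, which depends only on $r$ and not on the particular conformal map $f$, as claimed.
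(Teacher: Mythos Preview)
Your proof is correct and follows the approach the paper indicates: the paper does not actually prove this lemma but simply states that it is an easy consequence of the Koebe distortion theorem and refers to \cite{Garnett}, Theorem~4.5, p.~22. Your argument supplies exactly the standard details behind that deduction---normalizing into the class $S$, using the distortion bounds on $|f'|$ together with the growth theorem (via M\"obius pre-composition) to get a bi-Lipschitz estimate on $\overline{B(0,r)}$, and then dividing the resulting diameter bounds for $K$ and $B$.
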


It is easy to deduce this form the the Koebe distortion theorem; for example, one can use \cite{Garnett} Theorem $4.5$ p. $22$. We stress that the constant $C_r$ is independent of $f$ and only depends on the conformal modulus of the annulus $\bD \setminus B$. As $r$ tends to $1$, more distortion is possible.

It is easy to see that we need not apply Lemma $\ref{Kobe}$ to the unit disk $\bD$. Indeed, we may replace $\bD$ by any simply connected domain and $B$ by any simply connected domain compactly contained inside of it. Then the constants of the theorem depend only on the conformal modulus of the annulus between these two domains. When we say bounded conformal distortion, we mean a bound on the conformal modulus, and hence a bound on the constants in Lemma $\ref{Kobe}$.  

\begin{lemma}
	\label{distortion}
The only components $W$ of $f^{-1}(A_k)$ for that are in $A_j$ for $j \geq k$ are in the petals $R_j \cdot \Omega^p_{n_j}$. Moreover, there exists a ball $B$ so that $W$ is compactly contained in $\frac{1}{2}B$ and so that $f|_{2B}$ is conformal.
\end{lemma}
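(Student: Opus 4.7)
The plan is to first show that $W$ must lie deep inside a single scaled petal $R_j \cdot \Omega_{n_j}^p$, and then to construct the ball $B$ using the nearly-conformal behavior of $f$ on that petal together with the distortion estimates of Lemma $\ref{Kobe}$.

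For the localization, I would apply Lemma $\ref{A_k}$ to write $f(z) = C_j H_{n_j}(z/R_j)(1 + O(R_j^{-1}))$ on $A_j$. Any $z \in W$ satisfies $|f(z)| \leq 4R_k$, so $|H_{n_j}(z/R_j)|$ is bounded by $\frac{8R_k}{|C_j|}(1+o(1))$. Together with the estimate $|C_j| \geq R_j^{n_{j-1}}/2^j$ proved just after Lemma $\ref{A_k}$, and the hypothesis $j \geq k$, this upper bound can be made arbitrarily small once $R$ is large. The sublevel set $\{|H_{n_j}|<\varepsilon\}$ for $\varepsilon$ small splits into a neighborhood of the origin together with $n_j$ small components sitting deep inside the petals $\Omega_{n_j}^p$; since $W \subset A_j$ excludes the neighborhood of the origin, $W$ must lie in a single scaled petal $P := R_j \cdot \Omega_{n_j}^p$, proving the first claim.

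To build $B$, consider $\phi(z) := C_j H_{n_j}(z/R_j)$, which is a conformal map of $P$ onto $C_j \bD$, and note that $f = \phi \cdot (1 + O(R_j^{-1}))$ on $P$. The image $\phi(W)$ is essentially contained in $A_k \subset \{|w| \leq 4R_k\}$, which lies very deep inside $C_j \bD$ since $R_k \ll |C_j|$. Pick any $z_0 \in W$ and let $B = B(z_0,r)$ be a Euclidean disk of radius $r$ comparable to $|(\phi^{-1})'(\phi(z_0))| \cdot |C_j|/8$. Applying Lemma $\ref{Kobe}$ to $\phi^{-1}$ on the image disk $\{|w| < |C_j|/2\}$ then gives that $2B$ is compactly contained in $P$, while $W$ (whose $\phi$-image occupies only an $R_k/|C_j|$-fraction of that image disk) is compactly contained in $\frac{1}{2}B$ with considerable room to spare.

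Finally, $f$ is conformal on $2B$ because $\phi$ has no critical points in $P$ (it is a conformal equivalence) and the perturbation factor $1+O(R_j^{-1})$ has negligible derivative by the Cauchy estimates of Section $6$: on $2B$ one finds $f' = \phi'(1+O(R_j^{-1})) + O(|\phi|/R_j^2)$, where $|\phi'|$ is bounded below by $|C_j|n_j/R_j$ (up to a Koebe constant) because $2B$ is well inside $P$, while $|\phi|/R_j^2 \leq |C_j|/R_j^2$, so the perturbation is a factor of $R_j/n_j$ smaller than the main term. The main obstacle is translating the Koebe distortion bounds on $\phi$ into a statement about an honest Euclidean ball $B$ satisfying both $W \subset \frac{1}{2}B$ and conformality of $f$ on $2B$ simultaneously; this works because the margin factor $R_k/|C_j|$ is much smaller than any fixed fraction, so the Koebe constants are easily absorbed once $R$ is taken sufficiently large.
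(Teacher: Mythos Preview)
Your argument is correct and reaches the same conclusion, but by a different route than the paper. For the localization claim the paper simply cites \cite{CB1}; your explicit sublevel-set argument for $H_{n_j}$ is essentially what that citation unpacks. For the ball $B$, however, the paper does not use Koebe distortion on $\phi^{-1}$ at all. It invokes Lemma~\ref{diam} (the Julia set inside a petal has diameter $O(R_j^{-2}/2^{n_j})$) together with the lemma immediately following it (critical points in $A_j$ lie at distance comparable to $R_j/n_j$ from the Julia set), and then simply takes $B$ to be a ball of unit radius centered in $W$: since $\diam(W)\ll 1\ll R_j/n_j$, both $W\subset\tfrac{1}{2}B$ and the absence of critical points in $2B$ are immediate. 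Your approach trades those two cited geometric facts for a direct perturbation estimate on $f'=\phi'(1+h)+\phi h'$, which is more self-contained but needs more bookkeeping: with $r\sim |(\phi^{-1})'(\phi(z_0))|\cdot|C_j|/8$ the ball $2B$ only barely fits inside $P$ by the Koebe $1/4$ theorem, so you should shrink that constant to guarantee $2B$ stays in the region where your lower bound on $|\phi'|$ is valid. One minor slip: the perturbation-to-main-term ratio is $1/(R_j n_j)$ rather than $R_j/n_j$, though the conclusion that it is negligible is of course unchanged.
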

In other words, $f$ maps $W$ to $A_k$ with uniformly bounded conformal distortion. 
\begin{proof}
A proof of the first sentence is found in Lemma $16.2$ of \cite{CB1}. To prove the second fact, recall that any critical point $z \in A_k$ has distance approximately $R_k/n_k$ from the Julia set of $f$. $W$ contains the Julia set inside of the petal $R_j \cdot \Omega^p_{n_j}$ and has diameter at most approximately $R_j^{-2}/n_j$. Therefore, there exists a ball $B$ of unit size so that $2B$ does not contain any critical points, and $1/2B$ contains $W$. 
\end{proof}
Note that the proof above can easily be modified so that if $R$ is initially chosen sufficiently large, we can choose $B$ to make the conformal distortion as close to $1$ as we would like. 

Now we turn to a systematic labeling of the Fatou components of $f$. For $k \geq 1$, define $\Omega_k$ to be the Fatou component containing $B_{k-1}$. Here, we interpret $B_0 = f^{-1}(B_1)$. For $k \leq 0$, define $\Omega_k = (f^{-k+1})^{-1}(\Omega_1)$. The $\Omega_k$'s are distinct; this will follow from the argument at the beginning of the proof of Theorem $\ref{C1}$. 
\begin{lemma}
	\label{OmegaProp1}
	For all $k$, $f(\Omega_k) = \Omega_{k+1}$ 
\end{lemma}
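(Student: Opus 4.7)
The argument splits naturally into the two cases $k\geq 1$ and $k\leq 0$, and in each case we must establish both the inclusion $f(\Omega_k)\subset \Omega_{k+1}$ and surjectivity. For $k\geq 1$, the inclusion follows from Theorem~\ref{itinerary}: since $B_{k-1}\subset \Omega_k$ (with $B_0=f^{-1}(B_1)$), and $f(B_{k-1})\subset B_k\subset \Omega_{k+1}$, the image $f(\Omega_k)$ is a connected open set that meets $\Omega_{k+1}$. Because $f$ carries the Fatou set into itself, $f(\Omega_k)$ must be contained in a single Fatou component, which is therefore $\Omega_{k+1}$. For $k\leq 0$, interpreting $\Omega_k$ as the distinguished connected component of $f^{-1}(\Omega_{k+1})$ that perpetuates the sequence, the inclusion is immediate from the definition.

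For surjectivity I would run a topological argument that is uniform in $k$. The key input is that $\Omega_k$ is bounded; granting this, $\overline{\Omega_k}$ is compact and so $f(\overline{\Omega_k})$ is closed in $\bC$. Since $\partial \Omega_k\subset \cJ(f)$ and the Julia set is forward invariant, $f(\partial \Omega_k)\subset \cJ(f)$ is disjoint from the Fatou component $\Omega_{k+1}$. Therefore $f(\overline{\Omega_k})\cap \Omega_{k+1}=f(\Omega_k)$, so $f(\Omega_k)$ is relatively closed in $\Omega_{k+1}$. The open mapping theorem makes $f(\Omega_k)$ relatively open in $\Omega_{k+1}$, and it is nonempty, so the connectedness of $\Omega_{k+1}$ forces $f(\Omega_k)=\Omega_{k+1}$.

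The main obstacle I anticipate is producing a self-contained justification that each $\Omega_k$ is bounded at this point in the paper, since the detailed description of these components as multiply connected wandering domains is developed only in later sections. The cleanest route is to invoke the classical theorem of Baker that every multiply connected Fatou component of a transcendental entire function is bounded; alternatively one can give a direct argument using Lemma~\ref{JuliaPlace}, which places Julia set in every annulus $V_k$ and in the petals $R_k\cdot \Omega^p_{n_k}$, thereby confining any Fatou component meeting $B_{k-1}$ to a bounded region. A secondary wrinkle on the $k\leq 0$ side is that one must pin down exactly which connected component of $f^{-1}(\Omega_{k+1})$ is meant by $\Omega_k$; this choice should be the distinguished one winding about the origin, consistent with the distinctness assertion that the author defers to a later theorem.
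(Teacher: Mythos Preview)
Your inclusion argument for $k\geq 1$ is exactly the paper's: $f(B_{k-1})\subset B_k\subset\Omega_{k+1}$, hence $f(\Omega_k)\subset\Omega_{k+1}$. For $k\leq 0$ the paper likewise says ``true by definition'', and your caveat about which connected component of $(f^{-k+1})^{-1}(\Omega_1)$ is meant reflects a genuine imprecision in the paper's definition.

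Where you diverge is in the surjectivity step. The paper dispatches this in a single sentence: ``But $\Omega_k$ is also a connected component of the Fatou set, so we must have equality.'' This is an implicit appeal to the standard fact from transcendental dynamics that the image under $f$ of a Fatou component is a full Fatou component (possibly minus an asymptotic value, which is irrelevant here). You instead give a direct open--closed connectedness argument, which is correct but forces you to establish that $\Omega_k$ is bounded. That is precisely the obstacle you flag, and at this point in the paper neither multiple connectivity (for Baker's theorem) nor nonemptiness of $\cJ(f)\cap V_{k+1}$ (Lemma~\ref{JuliaPlace} gives only an inclusion) is yet available. So your route is more self-contained but genuinely harder to close here; the paper's route sidesteps the issue by leaning on the general theorem.
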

\begin{proof}
If $k \leq 0$ this is true by definition. For $k \geq 1$, we know that $f(B_{k-1}) \subset B_k \subset \Omega_{k+1}$ by Theorem $\ref{itinerary}$. Since $\Omega_{k+1}$ is a connected component of the Fatou set it follows that $f(\Omega_k) \subset \Omega_{k+1}$. But $\Omega_k$ is also a connected component of the Fatou set, so we must have equality.
\end{proof}

Next we want to show that the outermost boundary components of each $\Omega_k$ are $C^1$ smooth, and if $k \geq 1$, they approximate round circles as well. We need the following Lemma, whose proof is in \cite{CB1}.

\begin{lemma}
	\label{AnnulusMap}
	Suppose $h$ is a holomorphic function on $A = \{z \,\colon\, 1 < |z| < 4\}$ and suppose that $|h|$ is bounded by $\epsilon$ on $A$. Let $H(z) = z^m (1+h(z)).$ For any fixed $\theta$ the segment $S(\theta) = \{re^{i\theta}\,\colon\, 3/2 \leq r \leq 5/2\}$ is mapped by $H$ to a curve that makes angle at most $O(\epsilon / m)$ with any radial ray it meets.
\end{lemma}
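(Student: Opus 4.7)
The plan is to parametrize the image of the radial segment $S(\theta)$ by $r \mapsto H(re^{i\theta})$ and compute the angle between its tangent vector and the radial direction at the image point. At a point $w = H(z)$ with $z = re^{i\theta}$, the tangent vector to $S(\theta)$ is parallel to the radial ray through $w$ precisely when $\partial H/\partial r$ and $H(z)$ have the same argument. So the angle between the image curve and the radial ray through $w$ equals
\[
\arg\!\left(\frac{r\,\partial H/\partial r}{H(z)}\right).
\]

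First I would plug in $H(z) = z^m(1+h(z))$ and $\partial z/\partial r = e^{i\theta}$, and compute
\[
\frac{\partial H}{\partial r} = e^{im\theta} r^{m-1}\bigl[m(1+h(z)) + rh'(z)e^{i\theta}\bigr],
\]
so that after dividing by $H(z) = r^m e^{im\theta}(1+h(z))$ and multiplying by $r$, the quotient simplifies to
\[
\frac{r\,\partial H/\partial r}{H(z)} = m + \frac{zh'(z)}{1+h(z)}.
\]
Thus the angle we must estimate is $\arg\!\bigl(m + zh'(z)/(1+h(z))\bigr)$.

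Next I would estimate the perturbation $zh'(z)/(1+h(z))$ uniformly for $z \in S(\theta)$. Since $|h| \leq \epsilon$ on $A = \{1 < |z| < 4\}$, for $z$ with $|z| \in [3/2, 5/2]$ we have $\dist(z,\partial A) \geq 1/2$, so the Cauchy estimates give $|h'(z)| \leq 2\epsilon$. Assuming $\epsilon$ is small (say $\epsilon \leq 1/2$) we have $|1+h(z)| \geq 1/2$, and combining these bounds yields
\[
\left|\frac{zh'(z)}{1+h(z)}\right| \leq \frac{(5/2)(2\epsilon)}{1/2} = 10\epsilon.
\]
Hence $m + zh'(z)/(1+h(z)) = m + \delta$ with $|\delta| \leq 10\epsilon$, and since $m \geq 1$ the argument of such a complex number is at most $\arcsin(|\delta|/m) = O(\epsilon/m)$, which is the desired bound.

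The computation is essentially mechanical; the only mildly delicate point is recognizing that the angle with the radial direction at the image point is captured by $\arg(r\,\partial_r H / H)$, after which the structure of $H = z^m(1+h)$ makes the large factor $m$ appear in the real part while the perturbation contributes only a bounded imaginary part, producing the crucial $1/m$ in the final estimate. No deeper obstacle arises, so I would expect the main care needed is keeping track of the chain rule in the polar parametrization when expanding $\partial H/\partial r$.
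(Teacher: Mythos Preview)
Your argument is correct. The paper does not actually prove this lemma but cites \cite{CB1}; your computation of $\arg\bigl(r\,\partial_r H/H\bigr)=\arg\bigl(m+zh'(z)/(1+h(z))\bigr)$ together with the Cauchy estimate on $h'$ is exactly the standard route, and the bound $O(\epsilon/m)$ follows as you wrote.
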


Recall that by Theorem $\ref{itinerary}$ the image of the annulus $V_k \subset A_k$ contains $A_{k+1}$. It follows from Lemma $\ref{AnnulusMap}$ that $W = f^{-1}(V_{k+1}) \subset V_k$ is a topological annulus, and the boundary components of $W$ are smooth curves that are at most angle $\epsilon_k$ away from being round circles. From here, with the additional help of Lemma $\ref{Uk}$, we can also deduce that the width of $W$ is approximately $R_k / 2n_k$. 

\begin{theorem}
	\label{C1}
	For all $k$, the innermost boundary of $\Omega_k$ is $C^1$ smooth, and if $k \geq 1$, it approximates a round circle. 
\end{theorem}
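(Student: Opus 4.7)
The plan is to identify the innermost boundary of $\Omega_k$ as a specific component $\gamma_{k-1}$ of the Julia set $\cJ(f)$, and analyze it via repeated pullbacks. Using Lemma \ref{OmegaProp1} together with the location of $B_{k-1}$ inside $\Omega_k$, for $k \geq 2$ the curve $\gamma_{k-1}$ lies in $V_{k-1}$; for $k = 1$ it lies in the thin preimage annulus $V_0 := f^{-1}(V_1) \cap B(0, R_1/4)$; and for $k \leq 0$ it is a further pullback of $\gamma_0$ inside $B(0, R_1/4)$.

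For $k \geq 2$, I would first characterize $\gamma_{k-1}$ as the decreasing intersection
$$ \gamma_{k-1} = \bigcap_{n \geq 0} (f^n)^{-1}(V_{k-1+n}) \cap V_{k-1}, $$
each term of which is a topological annulus because $f$ has no critical points on $V_j$ by Corollary \ref{CriticalVk}. The remark preceding the theorem already gives that the two boundary components of the $n=1$ annulus are smooth curves whose tangent lines deviate from round-circle tangents by angle at most $O(\epsilon_{k-1}/n_{k-1})$. Iterating Lemma \ref{AnnulusMap}, each additional pullback contributes further angular deviation at most $O(\epsilon_{k-1+n}/n_{k-1+n})$. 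Because $n_j = 2^{N+j-1}$ and $\epsilon_j = O((3/4)^{n_j} + R_j^{-1})$ decay doubly exponentially, the tail $\sum_{j \geq k-1} \epsilon_j/n_j$ converges and can be made arbitrarily small by taking $N$ and $R$ large. Tracking tangent directions (not just positions) through each pullback yields a Cauchy sequence of $C^1$ parametrizations whose limit is $\gamma_{k-1}$; thus $\gamma_{k-1}$ is $C^1$ and its tangent lines are close to perpendicular to radial rays, which is the approximate roundness.

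For $k = 1$, the inner boundary $\gamma_0$ sits in $V_0$, and $f|_{V_0} \approx f_0^N$ is a $2^N$-to-$1$ covering of $V_1$. For $R$ chosen sufficiently large, $V_0$ lies at large values of the Green's function for $\bC \setminus K_{f_0}$, so the level sets of $|f_0^N|$ in the range $[3R_1/2, 5R_1/2]$ are themselves close to round circles. Applying Lemma \ref{AnnulusMap} once more, $\gamma_0 = f^{-1}(\gamma_1) \cap V_0$ inherits the $C^1$ smoothness and approximate roundness of $\gamma_1$ established in the previous step. For $k \leq 0$, the inner boundary of $\Omega_k$ is the further pullback of $\gamma_0$ along the inverse branch of $f$ that stays in $B(0, R_1/4)$. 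Since all critical points of $f$ in $B(0, r)$ lie in $B_f \subset K_f$ and these pullbacks never enter $K_f$, each step is a local holomorphic diffeomorphism and $C^1$ regularity is preserved; the resulting curves accumulate on the quasicircle $J_f$ as $k \to -\infty$, which is the wiggly behavior promised in the outline.

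The main obstacle is upgrading the obvious $C^0$ convergence of the nested pullback annuli to genuine $C^1$ convergence of their boundary curves. I would handle this by parametrizing boundaries by angle and using Lemma \ref{AnnulusMap} at every step to bound the shift in tangent direction, then exploiting the summability of the resulting geometric series of angular deviations, together with the fact that $f$ is locally conformal with non-vanishing derivative on each $V_j$, to produce a $C^1$ Cauchy sequence and hence a $C^1$ limit.
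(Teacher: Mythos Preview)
Your proposal is correct and follows essentially the same route as the paper: realize the innermost boundary as the nested intersection $\bigcap_n \{z: f^j(z)\in A_{k-1+j},\ j\le n\}$, apply Lemma~\ref{AnnulusMap} at each pullback step to get angle increments of size $O(\epsilon_{k-1+n})$, and use the summability of $\{\epsilon_j\}$ to conclude $C^1$ convergence and near-roundness for $k\ge 1$. The one point the paper spells out that you skip is verifying that this intersection actually \emph{is} the innermost boundary of $\Omega_k$ (the paper checks it lies in $\cJ(f)$ by producing nearby preimages of $0$, and that points just outside it land in some $B_j$ and hence in $\Omega_k$); conversely, your explicit treatment of the cases $k\le 1$ via conformal pullback is more detailed than the paper, which writes the argument only for indices $\ge 1$.
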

\begin{proof}
	Fix some $k \geq 1$ and define
	$$\Gamma_{k,n} = \left\{z \in A_k \,\colon\, f^j(z) \in A_{k+j}\, ,\,j = 1,\dots,n\right\}.$$
	Since $A_{k+n}$ is a round annulus, it has a natural foliation of closed circles. $\Gamma_{k,n}$ has an induced foliation of closed analytic curves by pulling back these circles in $A_{k+n}$ via $f$. 
	
	Let $L = L(\theta)$ be a ray through the origin. Then the curves in $\Gamma_{k,n}$ and $\Gamma_{k,n+1}$ intersect this ray with some angle which depends on the particular curve we choose in each family. We let $\phi_n$ and $\phi_{n+1}$ denote the supremum of these angles. By Lemma $\ref{AnnulusMap}$, we have 
	$$|\phi_n - \phi_{n+1}| = O(\epsilon_{n+k}),$$
	where the $\epsilon_n$'s are defined in Section 7.
	
	By our observation before the proof, the widths of these topological annuli $\Gamma_{k,n}$ contract uniformly. Therefore the intersection 
	$$\bigcap_{n=1}^{\infty} \Gamma_{k,n} = \Gamma_k$$
	is some closed set with no interior. We claim that $\Gamma_k$ is the innermost boundary component of $\Omega_{k+1}$. To see that it is in the Julia set, take any open ball with center $z \in \Gamma_k$. For a large enough $n$, we claim that there is a point $w'$ in $A_{k+n}$ so that 
	\begin{enumerate}
		\item $f(w') = 0$
		\item $w'$ has a preimage in $V_{k+n-1}$.
		\item There exists a point $w \in f^{-n}(w') \cap V_k$ that is also in the ball centered at $z \in \Gamma_k$.  
	\end{enumerate}	
	By Theorem $\ref{itinerary}$, we can arrange for $(1)$ and $(2)$ to hold for any $n$. $(3)$ follows from Lemma $\ref{Uk}$. $f^n$ looks like the composition of mappings $z^{n_j}$, so for sufficiently large $n$, there will be a preimage of $w'$ inside of the ball. This point $w \in C$, but $z$ escapes. Since the ball around $z$ was arbitrary, $z$ must be in the Julia set, so $\Gamma_k$ is. To see that it is the innermost boundary component of $\Omega_{k+1}$, again consider any small ball centered at $z \in \Gamma_k$. Then there is a point $w$ in this ball and there exists $n$ so that $w \in \Gamma_{k,n}$ but $w \notin \Gamma_{k,n+1}$ and $w$ is in the unbounded complementary component of $\Gamma$. By Theorem $\ref{itinerary}$, it follows that $f^{n+1}(w) \in B_{k+n+1} \subset \Omega_{k+n+2}$. By Lemma $\ref{OmegaProp1}$, $w \in \Omega_{k+1}$. It follows that $\Gamma_k$ is the innermost boundary of $\Omega_{k+1}$. 

	 Finally we claim $\Gamma_k$ is actually $C^1$ smooth. This follows from the summability of the sequence $\{\epsilon_k\}$. Indeed, the limit $\Gamma_k$ makes at most angle $O(\sum \epsilon_n)$ with the circular arcs that foliate $V_k \subset A_k$, and since the sequence is summable, the tangent directions of the foliation converge uniformly to tangents for $\Gamma_k$. If $k \geq 1$ we can make this curve close to a circle by making the sum $\sum \epsilon_n$ small.  
\end{proof}

It is also true that the outermost boundary component of each $\Omega_k$ is $C^1$ and close to a circle, since it coincides with the innermost boundary of $\Omega_{k+1}$.
\begin{lemma}
	\label{OmegaKProp2}
	The outermost boundary component of $\Omega_k$ is the innermost boundary component of $\Omega_{k+1}$.
\end{lemma}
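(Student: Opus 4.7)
The plan is to verify that the curve $\Gamma_k \subset V_k \subset A_k$ constructed in Theorem \ref{C1} and identified there as the innermost boundary component of $\Omega_{k+1}$ is simultaneously the outermost boundary component of $\Omega_k$. The argument is essentially topological, built on the inner/outer mapping behavior of $V_k$ from Theorem \ref{itinerary} and the location of the Julia set from Lemma \ref{JuliaPlace}.

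Since $\Gamma_k$ is a closed Jordan curve lying in $A_k$, it partitions $\bC$ into a bounded component $\Gamma_k^-$ (containing the origin, $B_f$, $A_{k-1}$, and $B_{k-1}$) and an unbounded component $\Gamma_k^+$ (containing $B_k$, $A_{k+1}$, and the $B_\ell$'s for $\ell \geq k$). In particular $\Omega_k \supset B_{k-1}$ lies in $\Gamma_k^-$ while $\Omega_{k+1} \supset B_k$ lies in $\Gamma_k^+$, so the two Fatou components sit on opposite sides of $\Gamma_k$. What remains is to show that points in $V_k$ immediately on the $\Gamma_k^-$ side of $\Gamma_k$ belong to $\Omega_k$. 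Given such a $z$, the nested description $\Gamma_k = \bigcap_n \Gamma_{k,n}$ supplies a finite $n$ with $z \in \Gamma_{k,n} \setminus \Gamma_{k,n+1}$; since $z$ lies on the inward side, $f^n(z) \in V_{k+n}$ must leave $A_{k+n+1}$ across the innermost boundary of $V_{k+n}$, so the last sentence of Theorem \ref{itinerary} forces $f^{n+1}(z) \in B_{k+n}$. All subsequent iterates stay in the $B_\ell$'s and diverge locally uniformly to infinity, so $z \in \cF(f)$.

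To identify the Fatou component containing $z$, I would join $z$ to $B_{k-1}$ by an arc lying in $\cF(f) \cap \Gamma_k^-$. By Lemma \ref{JuliaPlace}, $\cJ(f) \cap A_k \subset V_k \cup (R_k \cdot \Omega^p_{n_k})$, and by Lemma \ref{diam} the Julia set inside each petal has diameter $O(R_k^{-2} 2^{-n_k})$. In particular, the annular band from the inner edge of $V_k$ inward to the circle $|w| = R_k/4$ meets the Julia set only in these vanishingly small petal pieces, so a short, nearly radial path from $z$ to a point on $|w| = R_k/4$ can be perturbed by an $O(R_k^{-2} 2^{-n_k})$ amount to miss each petal it would otherwise cross; by choice of the starting point just inside $\Gamma_k$, the path is disjoint from $V_k \setminus \Gamma_k^-$ as well. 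The arc therefore lies entirely in $\cF(f)$ and terminates in $B_{k-1}$, so $z$ and $B_{k-1}$ lie in the same Fatou component, which is $\Omega_k$. Hence $\Gamma_k \subset \partial \Omega_k$, and because $\Omega_k \subset \Gamma_k^-$, no boundary component of $\Omega_k$ can sit further out than $\Gamma_k$, showing $\Gamma_k$ is the outermost boundary of $\Omega_k$. The only nontrivial step is the arc construction in the third paragraph; once a single Fatou arc from a neighborhood of $\Gamma_k$ to $B_{k-1}$ is in hand, the identification of outermost with innermost boundary is immediate.
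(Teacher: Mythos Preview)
Your overall strategy is sound and your second paragraph matches the paper's reasoning, but the arc construction in your third paragraph has a genuine gap, and the paper's route avoids it.

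The paper does not build an arc to $B_{k-1}$. Instead it introduces the thin annuli $Z_{k+n}$ at the inner edge of $V_{k+n}$ with $f(Z_{k+n})\subset B_{k+n}$, notes that the pullbacks $f^{-n}(Z_{k+n})\subset V_k$ wind around the origin and approach $\Gamma_k$ from the inside, and then uses Lemma~\ref{OmegaProp1} ($f(\Omega_j)=\Omega_{j+1}$) to identify the Fatou component of any $w\in f^{-n}(Z_{k+n})$ as $\Omega_k$. So once you have your second-paragraph conclusion $f^{n+1}(z)\in B_{k+n}\subset\Omega_{k+n+1}$, the paper would simply invoke Lemma~\ref{OmegaProp1} rather than construct a path.

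Your arc argument, by contrast, is incomplete. Lemma~\ref{JuliaPlace} only says $\cJ(f)\cap A_k\subset V_k\cup (R_k\cdot\Omega^p_{n_k})$; it does \emph{not} say that the Julia set inside $V_k$ reduces to $\Gamma_k$. In fact the boundaries of the holes $\Omega_k^a$ sit in $V_k\cap\Gamma_k^-$ and (as the later ``shape of round components'' lemma makes explicit) accumulate on $\Gamma_k$ from the inside. A radial segment from $z$ down through $V_k$ may therefore cross infinitely many of these Julia pieces, and your sentence ``by choice of the starting point just inside $\Gamma_k$, the path is disjoint from $V_k\setminus\Gamma_k^-$'' only keeps the path on the bounded side of $\Gamma_k$ --- it does nothing to dodge these holes. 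This is repairable (the holes at level $j$ have diameter $O(R_j^{-1})$ and are well separated, so one can perturb past each level and sum the perturbations), but as written the arc is not justified. Using Lemma~\ref{OmegaProp1} directly, as the paper does, sidesteps the issue entirely.
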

\begin{proof}
By Theorem $\ref{itinerary}$, the innermost boundary component of $V_k$ maps into $B_k$ for all $n$. It follows that there is a topological annulus $Z_k$ with the same innermost boundary component as $V_k$, and $f(Z_k) \subset B_k \subset \Omega_{k+1}$. By the proof of Theorem $\ref{C1}$, the topological annuli $f^{-n}(Z_{k+n})$ approach $\Gamma_k$. Therefore, nearby any point $z \in \Gamma_k$ is a point $w \in f^{-n}(Z_{k+n})$, for some $n$, so that $f^{n+1}(w) \subset \Omega_{k+n+1}$. It follows from Lemma $\ref{OmegaProp1}$ that $w \in \Omega_k$, so that $z$ is also in the boundary of $\Omega_k$.   
\end{proof}

By Lemma $\ref{diam}$, we know that each $\Omega_k$ is multiply connected (they are actually infinitely connected, which is easy to see from the proof of Theorem $\ref{C1}$ when we showed $\Gamma_k$ was in the Julia set). We can break the complement of $\Omega_k$ into three types of regions $\Omega_k^a$, $\Omega_k^{0}$, and $\Omega_k^{\infty}$. $\Omega_k^0$ is the region containing the origin and $\Omega_k^{\infty}$ is the unbounded region. The remaining regions $\Omega_k^a$ lie between the innermost and outermost boundary components of $\Omega_k$. We define $\Omega_k^A$ to be the union of $\Omega_k$ and all the regions $\Omega_k^a$, so that $\Omega_k^A$ is a topological annulus. We will also consider the regions $\hat \Omega_k$, which we define to be the union of $\Omega_k$, $\Omega_k^0$, and all the regions $\Omega_k^a$. Therefore, $\hat \Omega_k$ is a topological disk. In general, for a set $U$, we denote $\hat U$ as $U$ and the union of all its complementary connected components. In approximation theory, this is often referred to as taking the polynomial hull of $U$. 

Next, let $\Omega_k^a \subset R_k \cdot \Omega_{n_k}^p$ be a complementary component of $\Omega_k$. The boundary of $\Omega_k^a$ is in the Julia set, and $\Omega_k^a$ contains a zero of $f$. By Lemma $\ref{distortion}$, $f$ maps $\Omega_k^a$ conformally onto some disk that contains the origin, and the boundary must also get mapped to the Julia set. Since $0 \in f(\Omega_k^a)$, Lemma $\ref{OmegaKProp2}$ and Lemma $7.2$ show that the boundary gets mapped to the outermost boundary of $\Omega_k$. It follows that inside of $\Omega_k^a$, there are conformal copies of $\Omega_j$ for $j \leq k$. This motivates the following definition.

\begin{definition}
	We call a Fatou component $\omega$ of \textit{k-type} if there exists $m$ so that $f^m: \omega \ra \Omega_k$ is a conformal mapping. That is, $\omega$ maps conformally onto $\Omega_k$.
\end{definition}
The value for $k$ is unique, since $f^{m+1}$ will be an $n$ to $1$ mapping, where $n$ depends on $\Omega_k$. Later, we will prove that every Fatou component that escapes is of $k$-type for some $k$.

There are points in the Julia set that are not in the boundary of any Fatou component. Such points are in the Julia set and are called \textit{buried points}. We say a point $z$ \textit{moves backwards} under $f$ if $z \in A_k$ and $f(z) \in A_j$ for $j \leq k$. $z$ moves backwards infinitely often if there are infinitely many natural numbers $m$ so that $f^m(z)$ moves backwards. 

\begin{lemma}
	\label{movesbackwards}
Let $z \in A$ be given. If $z$ is buried then it moves backwards infinitely often.
\end{lemma}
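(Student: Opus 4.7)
The plan is to argue by contradiction. Suppose $z \in A$ is buried but moves backwards only finitely many times. Then there exists $M \geq 0$ such that for every $j \geq 0$ the iterate $f^{M+j}(z)$ does not move backwards, so if $k_0$ is the index with $f^M(z) \in A_{k_0}$, Lemma 9.1 forces $f^{M+j}(z) \in A_{k_0+j}$ for every $j \geq 0$ (the ``forward'' case $W \subset A_{k-1}$ is the only non-backward alternative for preimage components). Matching this against the definition
\[ \Gamma_{k_0,n} = \{ w \in A_{k_0} : f^j(w) \in A_{k_0+j},\ j = 1, \dots, n\} \]
used in the proof of Theorem \ref{C1}, we see that $f^M(z) \in \Gamma_{k_0,n}$ for all $n$, hence $f^M(z) \in \Gamma_{k_0}$, which is exactly the innermost boundary component of $\Omega_{k_0+1}$.

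Next I would transfer this boundary-point information back to $z$ via $f^M$. For this, the key input is that no point of the orbit $z, f(z), \dots, f^{M-1}(z)$ is a critical point of $f$. By Lemma 9.4, every critical point of $f$ lies in the polynomial-like basin $B_f$ or in some $A_k$ with its image inside $B_k$. The first option is ruled out because $z \in A$ means the orbit of $z$ is disjoint from $C = \bigcup_n f^{-n}(K_f) \supset B_f$. The second option is ruled out because $z \in A$ also forces the orbit to remain inside $\bigcup_k A_k$; in particular it never enters any $B_k$ (Theorem \ref{itinerary} moreover shows that once an orbit enters $B_k$ it stays in $B_\ell$'s forever, so any single visit is fatal). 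Hence $(f^M)'(z) \neq 0$ and $f^M$ is a local biholomorphism at $z$.

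Finally, since $f^M(z) \in \partial \Omega_{k_0+1}$, every neighborhood of $f^M(z)$ meets the Fatou component $\Omega_{k_0+1}$. Composing with a local inverse of $f^M$ near $z$, every neighborhood of $z$ meets the Fatou component $U$ containing the preimage of those points (complete invariance of the Fatou set guarantees this preimage lies in a single Fatou component near $z$). Thus $z \in \partial U$, contradicting the hypothesis that $z$ is buried. The main subtlety, and the only step that really needs care, is verifying that $f^M$ is locally injective at $z$; everything else is a direct translation between the ``non-backward'' condition on the orbit and the defining sequence of annuli for $\Gamma_{k_0}$ from Theorem \ref{C1}.
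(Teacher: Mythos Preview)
Your argument is correct and follows the same route as the paper: argue by contradiction, then use the characterization $\Gamma_{k_0}=\bigcap_n \Gamma_{k_0,n}$ from the proof of Theorem~\ref{C1} to place an iterate of $z$ on $\partial\Omega_{k_0+1}$. The paper shortcuts your last paragraph by simply replacing $z$ with $f^M(z)$ at the outset (buriedness is preserved under forward iteration because $f$ is open), so your careful verification that $f^M$ is locally injective at $z$, while valid, is more than is needed. One small patch: the sets $\Gamma_{k,n}$ in Theorem~\ref{C1} are introduced only for $k\geq 1$, so you should first enlarge $M$ until $k_0\geq 1$; this is harmless since the orbit is moving forward through the $A_k$'s.
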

\begin{proof}
Suppose that $z$ is a buried point. Suppose for the sake of a contradiction $z$ moved backwards only finitely often. By considering an iterate of $z$, we may assume $z$ never moves backwards. $z$ is in the Julia set, so if $z \in A_k$, then $z \in V_k$, and $f^n(z) \in V_{k+n}$ for all $n$. By the proof of Theorem $\ref{C1}$, $z$ must be on the boundary of $\Omega_k$, so it is not buried.
\end{proof}
We will see later that $z$ is buried if and only if it moves backwards infinitely often. For the rest of the paper, we will refer to the points that move backwards infinitely often as $Y$.

\section{The $s$-Sum of the Fatou Components: Preliminaries}
In the next few sections, our goal to prove the following technical theorem, which we need to show that $\Pdim(\cJ(f))$ can be made as close to $s$ as we like. Recall that $s = \Hdim(J_f)$.
\begin{theorem}
	\label{S-Sum}
	Let $\epsilon > 0$ be given. Then $R$ may be chosen large enough so that
	$$\sum_{k=1}^{\infty} \sum_{\omega_k \subset \Omega_1^A} \diam(\omega_k)^{s+\epsilon} < \infty,$$
	where the sum is taken over all Fatou components $\omega_k \subset \Omega_1^A$ of $k$-type for $k \geq 1$.
\end{theorem}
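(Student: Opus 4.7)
The plan is to exploit the self-similarity of $f$-pullbacks through the holes $\Omega_1^a$ to set up a recursive inequality for the target sum, and then close the recursion by separately bounding the contribution from the near-origin region $\Omega_1^0$ using the hyperbolicity of the polynomial-like map there.

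First I observe that every $k$-type component $\omega_k \subset \Omega_1^A$ is either $\Omega_1$ itself (which forces $k=1$ and contributes the single term $\diam(\Omega_1)^{s+\epsilon}$) or lies inside some hole $\Omega_1^a$. In the latter case, the discussion preceding Definition 9.4 combined with Lemma \ref{distortion} shows that $f$ maps $\Omega_1^a$ conformally onto $\hat\Omega_1$ with distortion that can be made as close to $1$ as desired by making $R$ large. Lemma \ref{Kobe} then produces a uniform constant $C$ (close to $1$) with
$$\diam(\omega_k) \leq C\, \diam(\Omega_1^a)\, \frac{\diam(f(\omega_k))}{\diam(\hat\Omega_1)},$$
where $f(\omega_k) \subset \hat\Omega_1 = \Omega_1^A \cup \Omega_1^0$ is itself a $k$-type component of smaller depth. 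Because $f|_{\Omega_1^a}$ is a bijection on $k$-type components, raising to the $(s+\epsilon)$ power and summing produces the recursion
$$T \leq \diam(\Omega_1)^{s+\epsilon} + \eta\,(T + U), \qquad \eta := C^{s+\epsilon}\,\frac{\sum_a \diam(\Omega_1^a)^{s+\epsilon}}{\diam(\hat\Omega_1)^{s+\epsilon}},$$
where $T$ denotes the sum in Theorem \ref{S-Sum} and $U$ the analogous sum over $k$-type components in $\Omega_1^0$. If $\eta<1$ and $U<\infty$, this inequality immediately gives a finite bound on $T$.

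The smallness of $\eta$ is the cheap step: by Lemma \ref{diam}, each $\Omega_1^a$ has diameter $O(R_1^{-2} 2^{-n_1})$, and there are at most $n_1 = 2^N$ of them (one per petal), while $\diam(\hat\Omega_1) \gtrsim R_1$. Hence $\eta = O(2^N R_1^{-3(s+\epsilon)} 2^{-n_1(s+\epsilon)})$, which can be made as small as desired by taking $R$ and $N$ large.

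The main obstacle is showing $U<\infty$. Each $k$-type component $\omega \subset \Omega_1^0$ has a forward orbit that must eventually exit $\Omega_1^0$; I would organize such $\omega$'s by their first exit time $\tau$ and by the univalent inverse branch of $f^\tau$ on $\Omega_1^0$ that produces them. Since $f$ is hyperbolic as a polynomial-like map (Corollary 6.4), these inverse branches contract exponentially on $J_f$ and have uniformly bounded Koebe distortion. The classical dimension theory (Bowen's formula) for hyperbolic polynomial-like maps ensures that the sum of $|(f^n)'|^{-(s+\epsilon)}$ over depth-$n$ inverse branches is summable whenever $s+\epsilon > \Hdim(J_f)=s$. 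Combined with the fact that upon exit the orbit lies in $\Omega_1^A$ and re-enters $T$ through the same conformal comparison as in the first step, this yields an estimate $U \leq C' T$ for some $C'$ bounded uniformly as $R$ grows. Substituting into the recursion and choosing $R$ large enough to force $\eta(1+C')<1$ gives
$$T\,\bigl(1 - \eta(1+C')\bigr) \leq \diam(\Omega_1)^{s+\epsilon},$$
so $T<\infty$, proving the theorem.
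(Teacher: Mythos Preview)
Your recursion has a structural gap. You assert that every hole $\Omega_1^a$ is mapped conformally by $f$ onto $\hat\Omega_1$, and you count at most $n_1 = 2^N$ such holes (``one per petal''). Neither claim is correct. The Fatou component $\Omega_1$ is infinitely connected: by Lemma~14.7 it carries, for every $j\ge 1$, a layer of $n_j\cdot 2^{d_j}$ holes sitting at distance $\approx R_1\cdot 2^{-d_j}$ from the outer boundary. Only the level-$1$ holes (those in the petals $R_1\cdot\Omega_{n_1}^p$) satisfy $f(\Omega_1^a)=\hat\Omega_1$. A level-$j$ hole instead satisfies $f(\Omega_1^a)=\Omega_2^{a'}$ (a level-$(j-1)$ hole of $\Omega_2$), and only after $j$ iterates does one land on $\hat\Omega_j$, not $\hat\Omega_1$. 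Consequently the image $f(\omega_k)$ of a $k$-type component in such a hole is \emph{not} in $\hat\Omega_1$, so it contributes neither to $T$ nor to $U$ as you have defined them, and the inequality $T \le \diam(\Omega_1)^{s+\epsilon} + \eta(T+U)$ fails to capture it.

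To repair this one must track, for each hole, how many forward steps are taken before the orbit first moves backward, and into which $A_l$ it then falls. This is exactly what the paper's covering $\{W_k^n\}$ does: it is indexed by the first backward time and target annulus, and the refinement estimate (Lemma~11.2) sums over all $q\ge 1$ and uses Lemma~5.7 to control the contribution from deep levels. Your handling of the near-origin region via Bowen's formula is essentially equivalent to the paper's Lemma~12.1, which phrases the same hyperbolicity input as convergence of the $(s+\epsilon)$-Whitney sum for the complement of $J_f$. So your $U\le C'T$ step is morally right; the problem is upstream, in the one-step self-similarity you impose on $\Omega_1^A$. A secondary issue: even once the recursion is corrected, you need a truncation argument (partial sums over bounded depth) to avoid the vacuous inequality $\infty\le\infty$.
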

The proof of Theorem \ref{S-Sum} will be carried out in Section $13$ after we prove some preliminary lemmas. For convenience, we will call the sum in Theorem $\ref{S-Sum}$ an \textit{$(s+\epsilon)$}-sum of the components of $k$-type. Recall that the components of $k$-type inside of $\Omega_1^A$ are the escaping Fatou components which eventually iterate conformally onto $\Omega_k$.   

To prove Theorem $\ref{S-Sum}$, we will construct a ``self-improving" covering of the set of points $Y$ which move backwards in the $A_k$'s infinitely often. By this we mean that if at some stage the covering contains a component $\omega_k$ of $k$-type, the new stage will cover all the holes (which we denote $\omega_k^a$, similar to the regions $\Omega_k^a$) of $\omega_k$ and is therefore a subset of the previous stage. We will show that the $(s+\epsilon)$-sum of the new covered components can be compared to the diameter of the previous component in such a way that the $(s+\epsilon)$-sum of the diameters of the components at all stages is summable. The following corollary of such a construction will be immediate.
\begin{corollary}
	\label{Radial}
Let $\epsilon > 0$. Then we may define $f$ in a way that depends on $\epsilon$ so that $\Hdim(Y) \leq s + \epsilon$.	
\end{corollary}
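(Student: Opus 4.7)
The plan is to establish $H^{s+\epsilon}(Y) < \infty$, from which $\Hdim(Y) \leq s+\epsilon$ follows immediately. First I would split $Y$ as $Y = Y_1 \cup Y_2$, where $Y_1$ consists of those points lying on the boundary of some Fatou component and $Y_2 = Y \setminus Y_1$ is the truly buried part of $Y$. By Theorem \ref{C1} and Lemma \ref{OmegaKProp2}, the inner and outer boundary of every $\Omega_k$ is $C^1$; since every other escaping Fatou component maps with uniformly bounded conformal distortion onto some $\Omega_k$ by Lemma \ref{distortion}, the boundary of any Fatou component is a $C^1$ curve. As there are countably many Fatou components, $Y_1$ lies in a countable union of $C^1$ curves and hence $\Hdim(Y_1) \leq 1 < s+\epsilon$.

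For $Y_2$, I would construct a self-refining cover adapted to the tree of nested Fatou components inside $\Omega_1^A$. Since $Y \subset A$ excludes $C = \bigcup_n f^{-n}(K_f)$, a point $z \in Y_2$ cannot iterate into the attracting basin, nor can it lie in an escaping component $\Omega_k$ (whose orbit moves forward $A_k \ra A_{k+1}$); so its orbit must repeatedly descend into holes $\omega^a$ of $k$-type components $\omega$. After composing with finitely many inverse branches I may assume $z$ begins in $\Omega_1^A$, and then $z$ sits, at every depth $n \geq 1$, inside some hat $\hat\omega$ of a $k$-type component at the $n$th level of the nested-hole tree. At depth $n=1$ Theorem \ref{S-Sum} directly bounds $\sum \diam(\hat\omega)^{s+\epsilon} \leq M$, and Lemma \ref{giantsum} allows us to take $M$ as small as we wish by enlarging $R$.

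To pass between depths, I would exploit Lemma \ref{distortion}: the iterate $f^m$ identifying a $k$-type component $\omega_k \subset \Omega_1^A$ with $\Omega_k$ extends with uniformly bounded conformal distortion onto the complementary components $\omega_k^a$. Koebe's estimate (Lemma \ref{Kobe}) then converts the first-level $(s+\epsilon)$-sum inside $\hat\omega_k$ into the multiplicative factor $C \diam(\omega_k)^{s+\epsilon}$ times the corresponding first-level sum inside $\hat\Omega_k$. Provided the first-level sums inside each $\hat\Omega_k$ are uniformly bounded in $k$ by some $M^*$ (expected from the same estimates that prove Theorem \ref{S-Sum}, applied with $\Omega_k^A$ in place of $\Omega_1^A$ and yielding constants independent of $k$), the $(s+\epsilon)$-sum at depth $n$ is at most $M(CM^*)^{n-1}$. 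Choosing $R$ so that $CM^* < 1$ makes the total sum over all depths finite, and since $\sum \diam(\omega_k)^{s+\epsilon} < \infty$ forces the hat diameters to shrink along the tree, every scale $\delta > 0$ is eventually resolved. Combining this with the bound on $Y_1$ gives the conclusion.

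The main obstacle I foresee is establishing the uniform constant $M^*$ across all base components $\Omega_k$: the estimates underlying Theorem \ref{S-Sum} must be arranged so the constants do not deteriorate as $k$ grows, since otherwise the geometric summation over depths fails. A related subtlety is the compounding of Koebe distortion constants under repeated refinement, so it is essential that $R$ be large enough to make both $M^*$ and the individual distortion factors small. Both issues should be simultaneously addressable by taking $R$ very large, since this shrinks the quantities $\epsilon_k$ of Section 7 and therefore all the distortion-controlling constants introduced in the geometry of the annuli $A_k$ and $V_k$.
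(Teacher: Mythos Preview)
Your overall strategy---a self-refining cover of $Y$ whose $(s+\epsilon)$-sums decay geometrically---is exactly the right idea, and is what the paper does. But the implementation differs in a way worth noting, and your version is more circuitous than necessary.

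The paper does \emph{not} deduce Corollary~\ref{Radial} from Theorem~\ref{S-Sum} as a black box. Instead, both statements are proved simultaneously from the same covering: the sets $\hat W^n_k$, which are (hulls of) components of $f^{-n}(A_k)$, refined each time a point first moves backwards. Lemmas~\ref{hole}, \ref{Far}, and \ref{Close} show that passing from stage $m$ to stage $m+1$ multiplies the $(s+\epsilon_0)$-sum by at most $1/2$. Summing over all stages gives Theorem~\ref{S-Sum}; observing that the $m$th stage alone covers $Y$ and has $(s+\epsilon_0)$-sum $\to 0$ gives $H^{s+\epsilon_0}(Y)=0$ and hence Corollary~\ref{Radial}. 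No separate treatment of $Y_1$ versus $Y_2$ is needed, and no uniform constant $M^*$ over all base components $\Omega_k$ has to be extracted.

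Your route---indexing the cover by the nested-hole tree of Fatou components and invoking Theorem~\ref{S-Sum} at depth one---is workable in principle, but the obstacle you flag (the uniform bound $M^*$ valid for every $\hat\Omega_k$) is precisely the content of Lemmas~\ref{Far} and \ref{Close}, just repackaged. You would essentially be reproving those lemmas in a slightly different coordinate system. Also note that Theorem~\ref{S-Sum} as stated sums only over components of $k$-type with $k\geq 1$, so your ``depth-one'' invocation does not by itself cover the wiggly components of $k$-type with $k\leq 0$; the paper handles this distinction through the separate Lemma~\ref{Close}, and you would need the analogue of that lemma anyway. In short: your plan is sound, but the paper's annulus-pullback covering $\{\hat W^n_k\}$ is the cleaner bookkeeping device, and once it is in place the corollary drops out in two lines.
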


In this section, we will describe this covering before moving on to some technical lemmas we will need in the next sections. The overall idea is that we wait for the first time that a point in $Y \cap A_1$ moves backwards, and refine the covering accordingly. The refinement roughly corresponds to replacing $\omega_k$ by the union of its holes $\omega_k^a$.

Define $W_1^0 = A_1$. For each $z \in A_1 \cap Y$, by definition, there is a first $n$ so that $f^n(z) \in A_k$ for $k < n$. $f^{-n}(A_k)$ has several components in $A_1$, all of which are topological annuli. $z$ is a member of one of these components, which we denote by $W^n_k$.  The collection of all such $W^n_k$ refines the covering of $Y \cap A_1$. We continue to refine the covering by the dynamics as follows. If $z \in W^n_k$, then there is a first $q$ so that $f^{n+q}(z) \in A_j$ for $j < k + q$. We replace $W^n_k$ by the component $W^{n+q}_j \subset (f^{n+q})^{-1}(A_j)$ containing $z$, which is clearly contained inside of $W^n_k$.

It will be useful to make the following modification to the procedure above. Instead of considering all $j < k+q$, we will just consider the $j = k+q-1$ case. To do this, we replace $W^{n+q}_{k+q-1}$ by the polynomial hull of $W^{n+q}_{k+q-1}$, which we denote as  $\hat W^{n+q}_{k+q-1}$. Note that these sets have the same diameter, and they contain all components in the cover of the form $W^{n+q}_{j}$, $j < k+q$ inside of it. Finally, note that $k+q \geq 0$, since we are waiting for the first time the component moves backwards, and $f$ maps $A_k$ with $k \leq 0$ onto $A_{k+1}$ by definition. In other words, the points in the negatively indexed $A_k$ never move backwards.

\section{The $s$-Sum of the Fatou Components: Refining for $k \geq 1$}
In this section, we show that refining the covering in the previous section results in a decreased $(s+\epsilon)$-sum compared to the previous stage, but only for components of $k$-type for $k \geq 1$.  

First we need an easy estimate comparing the diameter of $W^n_k$ to the diameter of the hole inside of it. 
\begin{lemma}
	\label{hole}
$R$ may be chosen so that, for all $k \geq 1$, $\alpha \geq s$, we have
$$\diam(W^n_{k-1})^{\alpha} \leq \frac{1}{4} \diam(W^n_k)^{\alpha}.$$
\end{lemma}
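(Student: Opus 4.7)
The plan is to use the conformality of the pullback $f^n : \hat W^n_k \to \hat A_k$ together with Koebe distortion to convert the explicit geometric ratio $R_{k-1}/R_k$ into a ratio of diameters. First I would observe the containment: since $A_{k-1} \subset \{|z| < R_k/4\} \subset \hat A_k$ (where $\hat A_k = \{|z| \leq 4R_k\}$ is the disk hull of $A_k$), every component $W^n_{k-1}$ of $f^{-n}(A_{k-1})$ relevant to the refinement lies in the bounded complementary region of the annulus $W^n_k$, hence in $\hat W^n_k$.

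For the conformality, I would exploit that the refinement was set up so that $W^n_k$ is reached by an orbit passing through strictly increasing annular indices $A_{k_1} \subset A_{k_2} \subset \cdots \subset A_{k_n}$ with no backward moves before step $n$. At each step, Lemma 9.2 places the preimage in a petal of the form $R_{k_i} \cdot \Omega^p_{n_{k_i}}$, and Lemma 9.9 supplies a ball on which $f$ is conformal with a uniform buffer. Composing, $f^n$ extends to a conformal map on a uniformly larger neighborhood of $\hat W^n_k$ onto a uniformly larger neighborhood of $\hat A_k$. Applying Koebe distortion (Lemma 9.8) to $(f^n)^{-1}$, and using that $A_k$ has fixed conformal modulus so that $\diam(\hat W^n_k) \asymp \diam(W^n_k)$, I would obtain
\[
\frac{\diam(W^n_{k-1})}{\diam(W^n_k)} \leq C \cdot \frac{\diam(A_{k-1})}{\diam(A_k)} \leq C' \cdot \frac{R_{k-1}}{R_k}
\]
for a universal constant $C'$.

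To close the argument, Corollary 5.5 gives $R_k \geq 4R_{k-1}^2$, so $R_{k-1}/R_k \leq 1/(4R_{k-1}) \leq 1/(8R)$. Taking $R$ large enough that $C' R_{k-1}/R_k \leq (1/4)^{1/s}$ and using $\alpha \geq s > 1$, raising both sides of the diameter inequality to the $\alpha$-th power yields
\[
\diam(W^n_{k-1})^\alpha \leq (1/4)^{\alpha/s} \diam(W^n_k)^\alpha \leq \tfrac{1}{4}\, \diam(W^n_k)^\alpha,
\]
as desired. The main obstacle is the uniform distortion bound used above: the distortion of $f^n$ must stay bounded as $n\to\infty$. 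This requires a telescoping Koebe argument based on the uniformly bounded conformal moduli of the petal-to-annulus pullbacks at each intermediate step, which are independent of the indices $k_i$ and of $n$ because all the annuli $A_k$ share the same modulus $\log 16$.
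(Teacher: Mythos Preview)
Your overall strategy matches the paper's: invoke Koebe distortion for the inverse branch of $f^n$ to convert $\diam(W^n_{k-1})/\diam(W^n_k)$ into $\diam(A_{k-1})/\diam(A_k)$, and then make that ratio small by taking $R$ large. The paper's proof is in fact just this, stated tersely.

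However, your justification for the conformality and uniform distortion of $f^n$ has a genuine gap. You claim that at each intermediate step the relevant preimage lies in a petal $R_{k_i}\cdot\Omega^p_{n_{k_i}}$ and then appeal to Lemma~\ref{distortion}. But the petal picture and Lemma~\ref{distortion} apply only to \emph{backward} moves (case~(2) after Lemma~9.1). The intermediate steps in the itinerary you describe are \emph{forward} moves $A_j\to A_{j+1}$; for these the preimage component winds around the annulus, is not contained in any petal, and $f$ is a $2n_j$-to-$1$ covering rather than a conformal map. So the lemmas you cite simply do not apply at those steps. Your argument also explicitly restricts to the first refinement stage (``no backward moves before step $n$''), whereas at later stages the itinerary of $W^n_k$ already contains backward moves.

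The correct mechanism is that only the \emph{last} step in each backward block sits in a petal, producing a small simply connected region on which $f$ is univalent with a uniform buffer (Lemma~\ref{distortion}). Pulling this small region back through the forward steps, each branch of $f^{-1}$ is univalent because the region does not wind around the annulus and stays away from the critical points (Corollary~\ref{CriticalVk} and Lemma~9.4); the forward steps are modeled by the power maps of Lemma~\ref{Uk}, and the moduli telescope with controlled distortion. This is what gives the univalent extension of $(f^n)^{-1}$ to a definite neighborhood of $\hat A_k$ needed for Koebe, uniformly in $n$ and in the refinement stage.
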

\begin{proof}
There is exactly one component of the form $W^n_{k-1}$ contained inside of the polynomial hull $\hat W^n_k$. By the Koebe distortion theorem
$$\frac{\diam W^n_{k-1}}{\diam W^n_k} \leq C \frac{\diam f^n(W^n_{k-1})}{\diam f^n(W^n_k)} = C \frac{\diam A_{k-1}}{\diam A_k} \leq C \frac{1}{R_0}.$$
Recall that $R_0 := \diam(A_0) = \diam(f^{-1}(A_1)).$ By choosing $R$ large enough, we have the desired result.   
\end{proof}

The next lemma is more complicated. We show that at any stage, when we refine a component $W^n_k$, we can control the diameters of the refined components in terms of the diameter of $W^n_k$.  
\begin{lemma}
	\label{Far}
	$R$ may be chosen so that, for all $k \geq 1$, $\alpha \geq s$
	$$\sum_{q \geq 1} \sum_{W^{n+q}_{k+q-1}} \diam(W^{n+q}_{k+q-1})^{\alpha} \leq \frac{1}{4} \diam(W_k^n)^{\alpha}$$
\end{lemma}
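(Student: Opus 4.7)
The plan is to use the conformal pullback $f^n\colon W_k^n\to A_k$ to transfer the problem into the annulus $A_k$, and then estimate the resulting sum using the local power-map structure of $f$ from Section 7 together with the super-exponential growth of the sequence $\{R_k\}$.

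First I would verify that $f^n\colon W_k^n\to A_k$ is a conformal bijection with uniformly bounded Koebe distortion. By its definition $W_k^n$ is obtained starting from a petal component of $f^{-1}(A_k)$ inside $A_n$ (on which $f$ is a conformal bijection to $A_k$ by Lemma \ref{A_k}) and pulling back through $n-1$ forward covering steps $f\colon A_j\to A_{j+1}$; at each such step $W_k^n$ is a single sheet of a $2n_j$-fold cover, and by Lemma \ref{distortion} each such sheet is a conformal bijection with uniformly bounded Koebe constants. Composing, $f^n\colon W_k^n\to A_k$ is itself a conformal bijection, so Lemma \ref{Kobe} gives
\begin{equation*}
\diam(W^{n+q}_{k+q-1})\asymp\diam(V_q)\cdot\frac{\diam(W_k^n)}{\diam(A_k)},\qquad V_q:=f^n(W^{n+q}_{k+q-1})\subset A_k.
\end{equation*}
So it suffices to prove
\begin{equation*}
\sum_{q\geq 1}\sum_{V_q\subset A_k}\diam(V_q)^\alpha\leq c\cdot\diam(A_k)^\alpha
\end{equation*}
for a constant $c$ that can be made arbitrarily small by taking $R$ large.

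Next I would enumerate and estimate the components $V_q$. Each $V_q$ corresponds to an orbit in $A_k$ that progresses forward through $A_k,A_{k+1},\ldots,A_{k+q-1}$ and at step $q$ lands in a petal component of $f^{-1}(A_{k+q-1})$ inside $A_{k+q-1}$. There are $n_{k+q-1}$ petal components in $A_{k+q-1}$, each of diameter of order $R_{k+q-1}^2\cdot 2^{2n_{k+q-1}}/(n_{k+q-1}\,R_{k+q})$ (from the conformal form $f\approx C_{k+q-1}H_{n_{k+q-1}}(\cdot/R_{k+q-1})$ of Lemma \ref{A_k} combined with Corollary \ref{RkCk}). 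Pulling each petal component back through the $q-1$ earlier forward steps produces $\prod_{j=0}^{q-2}2n_{k+j}$ conformal sheets per petal component, and a Koebe-distortion estimate applied sheet-by-sheet (using Lemma \ref{distortion}) bounds each sheet's diameter by the petal diameter times the overall scaling ratio $\diam(A_k)/\diam(A_{k+q-1})\asymp R_k/R_{k+q-1}$, up to uniform constants.

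Combining these yields the schematic bound
\begin{equation*}
\sum_{V_q}\diam(V_q)^\alpha\leq C\,\diam(A_k)^\alpha\cdot n_{k+q-1}^{1-\alpha}\cdot\prod_{j=0}^{q-2}(2n_{k+j})\cdot\left(\frac{R_{k+q-1}\cdot 2^{2n_{k+q-1}}}{R_{k+q}}\right)^\alpha.
\end{equation*}
By Lemma \ref{Rk}, $R_{k+q}/R_{k+q-1}\geq 2^{n_{k+q-1}}\cdot R_{k+q-1}^{2^{N-1}-1}$, so the rightmost factor is super-exponentially small in $k+q$; this decay dwarfs the merely exponential branching factor $\prod 2n_{k+j}$ and any accumulated distortion constants, so the series $\sum_q$ converges geometrically. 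The dominant $q=1$ term is $\lesssim n_k^{1-\alpha}\diam(A_k)^\alpha$ multiplied by a super-exponentially small prefactor; since $\alpha\geq s>1$ and $R$ is large, this can be forced below $\tfrac14\diam(A_k)^\alpha$, giving the lemma.

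The principal technical obstacle is bookkeeping Koebe distortion and branch multiplicities through the $q-1$ forward iterates of $f$, each of which is a branched covering of degree $2n_{k+j}$; a uniform application of Lemma \ref{distortion} at each step handles this. The essential analytic input is the super-exponential growth of $\{R_k\}$ from Lemma \ref{Rk}, which is strong enough to overwhelm both the exponential branching and any compounded distortion constants.
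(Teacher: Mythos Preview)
Your approach is essentially the same as the paper's: both count the components $W^{n+q}_{k+q-1}$ via the branching degrees along the itinerary $A_k\to A_{k+1}\to\cdots\to A_{k+q-1}\to A_{k+q-1}$, bound the diameter ratio $\diam(W^{n+q}_{k+q-1})/\diam(W^n_k)$ by Koebe distortion applied to the petal step and the pullback, and conclude by the super-exponential growth of $\{R_k\}$. The paper's writeup is terser---it cites Lemma~\ref{diam} directly for the petal diameter (getting the ratio $\leq C/R_{k+q-1}$) and packages the final summation into Lemma~\ref{giantsum}, whereas you compute the petal-preimage diameter explicitly via Corollary~\ref{RkCk} and argue the decay by hand; your count also carries the extra factor $n_{k+q-1}$ from the number of petals, which the paper's rough bound $2^q N_{k+q-2}$ appears to absorb, but either way the sum is dominated by the $R_{k+q-1}^{-\alpha}$ decay and the conclusion is the same.
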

\begin{proof}
	First we need to count how many new components of the type $W^{n+q}_j$ we get for each $q$. First note that by definition of our covering we have the following chain
	\begin{eqnarray*}
		W^{n+q}_{k+q-1} \subset W^n_k &\subset& A_1 \\
		f^n(W^{n+q}_{k+q-1}) \subset f^n(W^n_k) &\subset& A_k \\
		f^{n+1}(W^{n+q}_{k+q-1}) &\subset& A_{k+1} \\
		\vdots &\,& \vdots \\ 
		f^{n+q-1}(W^{n+q}_{k+q-1}) &\subset& A_{k+q-1} \\
		f^{n+q}(W^{n+q}_{k+q-1}) &\subset& A_{k+q-1}. 
	\end{eqnarray*}
	Indeed, we are choosing to cover with the hull of the component that goes back in the $j= n+q-1$ case, and since each time we choose $q$ so it was the first time this happened, we have this exact sequence of mappings. $f$ acts like a covering map in each of these individual situations. There are two possibilities.
	\begin{enumerate}
		\item $f: A_k \ra A_{k+1}$ and $k \geq 0$. In this case, the mapping is $2n_k$ to $1$.
		\item $f: A_k \ra A_{k+1}$ and $k < 0$. In this case, $f$ is a polynomial-like mapping and is $2^N$ to $1$. 
	\end{enumerate}
	It follows from taking preimages according to the definition of the covering that the number of components $W^{n+q}_j$ inside of $W^n_k$ is less than 
	$$ 2^q n_k \cdots n_{k+q-2} \leq 2^q N_{k+q-2}.$$
	Here, $N_{k} = n_1\cdot \cdots n_k$. 
	
	Finally, we can use the last petal map, Lemma $\ref{diam}$, and two applications of the Koebe distortion theorem to conclude that 
	$$\diam(W^{n+q}_{k+q-1}) \leq  \frac{C}{R_{k+q-1}} \diam(W_k^n).$$
	So for each $q$, the contribution to the sum is bounded above by
	$$O \left(2^qN_{k+q-2} R_{k+q-1}^{-1} \right) \cdot \diam (W^n_k)$$
	By Lemma $5.7$, choosing $R$ sufficiently large makes the big-oh term as small as we would like, so the result follows.
\end{proof}

\section{The $s$-Sum of the Fatou Components: Refining for $k \leq 0$}
Having dealt with the refinement of all components of $\Omega_k$-type for $k >0$, we turn to analyzing the refinement with $k \leq 1$. As $k \ra -\infty$, the Fatou components $\Omega_k$ are no longer approximate circles but annuli shaped like the fractal $J_f$. To estimate the $(s+\epsilon)$-sum when we refine the covering in these components, we will decompose them into pieces that map conformally onto a slit $\Omega_1$. The Koebe distortion theorem will allow us to compare the $(s+\epsilon)$ sum of the refinement restricted to one of these pieces to the $(s+\epsilon)$-sum of the refinement of a component of $\Omega_1$-type, with a corrective factor given by the diameter of the piece raised to the $(s+\epsilon)$ power. To control these corrective factors, we will show that they form a Whitney decomposition of the complement of $J_f$, and use Lemma $\ref{VE}$ and the fact that $\Hdim(J_f) = s$ to obtain a convergent sum. The end result is the following:

\begin{lemma}
	\label{Close}
Fix $\epsilon_0 >0$. Let $W^n_1 \in f^{-n}(A_1)$ be an element of the covering of $Y$. Let $W^n_j \in f^{-n}(A_j)$ be the components in the covering contained inside of $\hat W^n_1$. There exists $R$ so that 
	$$\sum_{j=0}^{-\infty} \sum_{W^{n+q}_{j+q-1} \subset W^n_j} \diam(W^{n+q}_{j+q-1})^{s+\epsilon_0} \leq \frac{1}{4} \sum_{W^{n+q}_{j+q-1} \subset W^n_1} \diam(W^{n+q}_{q})^{s+\epsilon_0}.$$
\end{lemma}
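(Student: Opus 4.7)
The strategy is to exploit the hyperbolicity of the polynomial-like mapping $f : \hat A_0 \to \hat A_1$ together with the identity $\UMdim(J_f) = s$. For any element $W = W^n_k$ of the covering, write $S(W) := \sum_{q \geq 1} \sum_{W^{n+q}_{k+q-1} \subset W} \diam(W^{n+q}_{k+q-1})^{s+\epsilon_0}$ for its refinement sum. My first step is to reduce to comparing refinement sums inside the base annuli $A_k$: by the Koebe distortion theorem (applied along the pullback chain as in the proof of Lemma \ref{hole}), $f^n : \hat W^n_1 \to \hat A_1$ has uniformly bounded distortion on each annular region $W^n_j$ with $j \leq 1$, the critical points of $f^n$ inside $\hat W^n_1$ corresponding only to preimages of the attracting basin $B_f$ and lying well away from these annular boundaries. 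This yields $S(W^n_j) \asymp \lambda^{s+\epsilon_0} S(A_j)$ with the common factor $\lambda = \diam(W^n_1)/\diam(A_1)$, reducing the claim to $\sum_{j \leq 0} S(A_j) \leq \tfrac{1}{4} S(A_1)$.

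The next step is to decompose each $A_j$ conformally into pieces equivalent to a slit version of $A_1$. Hyperbolicity of the polynomial-like map places all its critical values inside $B_f \subset K_f$, so no $A_k$ with $k \leq 1$ contains a critical value; consequently $f^{1-j} : A_j \to A_1$ is an honest $(2^N)^{1-j}$-fold covering of annuli for each $j \leq 0$. Slitting $A_1$ along a radial arc gives a simply connected $A_1^{\mathrm{slit}}$, and the $(2^N)^{1-j}$ inverse branches of $f^{1-j}$ yield disjoint simply connected pieces $P^{(j)}_i \subset A_j$ each mapping conformally onto $A_1^{\mathrm{slit}}$. Applying Koebe on each piece,
\begin{equation*}
S(P^{(j)}_i) \asymp \left(\frac{\diam P^{(j)}_i}{\diam A_1}\right)^{s+\epsilon_0} S(A_1),
\end{equation*}
and summing over all pieces,
\begin{equation*}
\sum_{j \leq 0} S(A_j) \asymp S(A_1) \cdot \sum_{j \leq 0,\, i} \left(\frac{\diam P^{(j)}_i}{\diam A_1}\right)^{s+\epsilon_0}.
\end{equation*}

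It remains to control $\sum_{j,i} \diam(P^{(j)}_i)^{s+\epsilon_0}$. The family $\{P^{(j)}_i\}$ tiles $\hat A_0 \setminus K_f$ (modulo slits), and the uniform expansion of the hyperbolic polynomial-like map on $J_f$ gives the Whitney-type comparison $\diam(P^{(j)}_i) \asymp \dist(P^{(j)}_i, J_f)$. Since $J_f$ is a quasicircle of dimension $s < 2$ and hence has zero area, Lemma \ref{VE} combined with $\UMdim(J_f) = s$ yields $\sum_{j,i} \diam(P^{(j)}_i)^{s+\epsilon_0} < \infty$, bounded in terms of the polynomial-like geometry and $\epsilon_0$ alone -- and \emph{independently} of $R$. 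On the other hand $\diam(A_1) = 8R_1 = 16R$ grows linearly in $R$, while each $P^{(j)}_i \subset \hat A_0$ has diameter bounded above independently of $R$, so taking $R$ large enough forces
\begin{equation*}
\sum_{j \leq 0,\, i} \left(\frac{\diam P^{(j)}_i}{\diam A_1}\right)^{s+\epsilon_0} \leq \tfrac{1}{4}
\end{equation*}
once all Koebe constants are absorbed, which completes the argument. The main obstacle will be verifying the Whitney-type comparison for the pieces $\{P^{(j)}_i\}$ uniformly in $j$ and $i$: this is essentially a Bowen--Ruelle thermodynamic-formalism input ensuring geometric decay of $(s+\epsilon_0)$-sums over inverse branches of the hyperbolic polynomial-like map once the exponent strictly exceeds $\Hdim(J_f) = s$.
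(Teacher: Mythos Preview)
Your approach is essentially the paper's own: decompose the region between $K_f$ and the outer boundary of $W^n_0$ (or, after your reduction, of $A_0$) into pieces mapping conformally onto a slit version of the $k=1$ annulus, observe that these pieces form a Whitney decomposition of the complement of $J_f$, apply Koebe piece by piece to compare refinement sums with that of $W^n_1$, and invoke Lemma~\ref{VE} together with $\UMdim(J_f)=s$ to control the resulting $(s+\epsilon_0)$-sum over the pieces. The paper carries this out directly inside $\hat W^n_1$ (pulling the decomposition back by $f^{-M}$), while you first reduce to the base configuration $\hat A_1$; these are equivalent since the reduction itself is just one more application of Koebe.

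There is, however, a genuine error in your endgame. You assert that each $P^{(j)}_i \subset \hat A_0$ has diameter bounded independently of $R$, and hence that $\sum_{j,i}\diam(P^{(j)}_i)^{s+\epsilon_0}$ is bounded independently of $R$. This is false for the outermost layer $j=0$. The annulus $A_0=\{|z|\le R_1/4:\ f(z)\in A_1\}$ lies where $|f(z)|\asymp R_1$; since $f\approx f_0^N\approx z^{2^N}$ in this range, $A_0$ sits at radius $\asymp R^{1/2^N}$, which tends to infinity with $R$. In particular $A_0$ is \emph{not} contained in the fixed polynomial-like domain $B(0,r)$, and the $2^N$ pieces $P^{(0)}_i$ have diameter $\asymp R^{1/2^N}$. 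Consequently the Whitney-type sum $\sum_{j,i}\diam(P^{(j)}_i)^{s+\epsilon_0}$ is of order $\diam(A_0)^{s+\epsilon_0}\asymp R^{(s+\epsilon_0)/2^N}$, not bounded.

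The repair is exactly what the paper does: rather than claiming the numerator is bounded, compare it to $\diam(A_0)^{s+\epsilon_0}$ (the outermost layer dominates, and the tail near $J_f$ converges by Lemma~\ref{VE}), and then use that
\[
\frac{\diam(A_0)^{s+\epsilon_0}}{\diam(A_1)^{s+\epsilon_0}} \ \asymp\ R^{(s+\epsilon_0)(2^{-N}-1)} \longrightarrow 0
\]
as $R\to\infty$. This is precisely the content of Lemma~\ref{hole} applied with $k=1$, which is how the paper closes the argument.
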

\begin{proof}
	For each $\Omega_k$, $k \leq 0$, $f$ is a $2^N$ to $1$ mapping, so each $\Omega_k$ can be decomposed into $2^{N(-k+1)}$ pieces each of which maps conformally onto $\Omega_1$ minus a slit. We denote this slit $\Omega_1$ by $\Omega_1^S$. This process breaks $\Omega_k$ into $2^{N(-k+1)}$ quadrilateral pieces $\cQ = \{Q^k_{i}\}_{i=i}^{2^{N(-k+1)}}$. All of the $Q^k_i$ have holes, and we would like to use these to build a Whitney decomposition of the unbounded complementary component of $J_f$. We denote the filled in components as usual by $\hat \cQ = \{ \hat Q^k_{i}\}_{i=i}^{2^{N(-k+1)}}$; this is just the polynomial hull of each $Q^k_i$. Furthermore, we choose to define $Q^k_i$ by the dynamics of $f$. To be precise, for each $i$ and $k$, we may choose $f(Q^k_i) = Q^{k+1}_{i'}$ for some $i'$. To accomplish this, it suffices to choose an appropriate decomposition of $\Omega_0$, and then define the decomposition of $\Omega_k$ for $k < 0$ by inverse images.
	
	We claim $\hat \cQ$ forms a Whitney decomposition of the complement of the Julia set $J_f$ of the polynomial-like map $f$. Indeed, there exists a quasiconformal mapping $\varphi$ from the complement of $K_f$ to the complement of the disk $\bD$ that conjugates the dynamics of the polynomial like map $f$ with $z^{2^N}$. Under this conjugacy, the cubes in $\hat \cQ$ map to approximate hyperbolic squares that form a Whitney decomposition of the complement of $\bD$ invariant under the dynamics of $z^{2^N}$. It follows that $\hat \cQ$ is a Whitney decomposition by applying $\varphi^{-1}$ since the hyperbolic squares are.
	
	Let us return back to $W^n_1$. Since $\hat \cQ$ forms a Whitney decomposition of the unbounded complement of $J_f$, $f^{-M}(\hat \cQ)$ forms a Whitney decomposition of the unbounded complement of $f^{-M}(J_f)$ contained inside of $\hat W^n_1$. If $W^{n+q}_{j+q-1} \subset W^n_j$, it is contained inside of some quadrilateral $Q = f^{-M}( \hat Q)$, and there exists $W^{n+q}_{q-1} \subset W^n_1$ so that, by the Lemma $\ref{Kobe}$
	$$\frac{\diam(W^{n+q}_{j+q-1})}{\diam(Q)} \leq C \frac{\diam (W^{n+q}_{q-1})}{\diam(W^n_1)}.$$
	Applying this for all $W^{n+q}_{j+q-1} \subset Q$, we obtain
	$$\sum_{W^{n+q}_{j+q-1} \subset Q} \diam(W^{n+q}_{j+q-1})^{s+\epsilon_0} \leq C \frac{\diam(Q)^{s+\epsilon_0}}{\diam(W^n_1)^{s + \epsilon_0}}\sum_{W^{n+q}_{j+q-1} \subset W^n_1} \diam(W^{n+q}_{q})^{s+\epsilon_0}.$$
	Next, if we sum over all the pieces $Q \in f^{-M}(\hat \cQ)$ that make up the decomposition of each Fatou component, we get 
	$$\sum_{j=0}^{-\infty} \sum_{W^{n+q}_{j+q-1} \subset W^n_j} \diam(W^{n+q}_{j+q-1})^{s+\epsilon_0} \leq C \cdot\frac{\sum_{Q \in f^{-M}(\hat \cQ)} \diam(Q)^{s+\epsilon_0}}{\diam(W^n_1)^{s+\epsilon_0}} \sum_{W^{n+q}_{j+q-1} \subset W^n_1} \diam(W^{n+q}_{q})^{s+\epsilon_0}.$$
	Since $f^{-M}(\hat \cQ)$ is a Whitney decomposition of the complement of $f^{-M}(K_f)$, the sum converges by Theorem $\ref{VE}$, and the sum is comparable to $\diam(W_0^n)$. By Lemma $\ref{hole}$, and by choosing $R$ to be sufficiently large we have 
	$$\sum_{j=0}^{-\infty} \sum_{W^{n+q}_{j+q-1} \subset W^n_j} \diam(W^{n+q}_{j+q-1})^{s+\epsilon_0} \leq \frac{1}{4} \sum_{W^{n+q}_{j+q-1} \subset W^n_1} \diam(W^{n+q}_{q})^{s+\epsilon_0}.$$
\end{proof}

\section{The $s$-Sum of the Fatou Components: Conclusions}
By combining the two technical lemmas above, we can now prove the Theorem $\ref{S-Sum}$. The rest of the section is dedicated to some simple corollaries of the proof below and a discussion of the geometry of the Fatou and Julia sets relevant to the next section. 

\begin{proof}[Proof of Theorem $\ref{S-Sum}$ and Corollary $\ref{Radial}$]
It is sufficient to prove that the diameters of all components $W^n_k$ for $k \geq 1$ converge. Indeed, each component $\omega_k$ can be associated to an element $W^n_k$ of the covering of $Y$ since the outer boundary component of $\omega_k$ is contained in a unique $\hat W^n_k$, so that they have comparable diameters.

Suppose we are at the $m$th stage of the refinement procedure described in Section $10$. Let $S$ denote all the collection of sets $\hat W^n_{k}$ obtained at this stage, and $S'$ denote all the components obtained by refining $S$. All of the components in $S'$ contained in $\hat W^n_k$ are contained in $W^n_j \subset \hat W^n_k$ for $j \leq k$. First we write
\begin{eqnarray*}
 \sum_{j=k}^{-\infty} \sum_{W^{n+q}_{j+q-1} \subset W^n_j} \diam(W^{n+q}_{j+q-1})^{s+\epsilon_0} 
	&=& \sum_{j=1}^k \sum_{W^{n+q}_{j+q-1} \subset W^n_j} \diam(W^{n+q}_{j+q-1})^{s+\epsilon_0} \\
	&\,& +  \sum_{j=0}^{-\infty} \sum_{W^{n+q}_{j+q-1} \subset W^n_j} \diam(W^{n+q}_{j+q-1})^{s+\epsilon_0}.
\end{eqnarray*}
We use Lemma $\ref{Far}$ and Lemma $\ref{Close}$ to estimate each of these terms so that 
$$ \sum_{j=k}^{-\infty} \sum_{W^{n+q}_{j+q-1} \subset W^n_j} \diam(W^{n+q}_{j+q-1})^{s+\epsilon_0} \leq  \frac{1}{4} \sum_{j=1}^k \diam(W^n_j) + \frac{1}{4} \diam(W^n_1) $$
Now we can apply Lemma $\ref{hole}$ to see that	
$$\sum_{j=k}^{-\infty} \sum_{W^{n+q}_{j+q-1} \subset W^n_j} \diam(W^{n+q}_{j+q-1})^{s+\epsilon_0} \leq \frac{1}{2} \diam(W^n_k). $$

It follows that the diameters of each refinement decay geometrically, so that the sum of all components of the covering is finite. This proves Theorem $\ref{S-Sum}$. Since the tail of a convergent series tends to $0$, it follows that the sum of the diameters of the $m$th refinement tends to $0$ as $m$ tends to infinity. Since the $m$th refinement covers $Y$, it follows that $H^{s +\epsilon_0}(Y) = 0$, so that $\Hdim(Y) \leq s + \epsilon_0$.
\end{proof}

\begin{corollary}
	$J(f)$ has zero Lebesgue measure.
\end{corollary}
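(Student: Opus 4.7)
The plan is to decompose $\cJ(f)$ into a countable union of planar-null sets and conclude by countable additivity; the essential dynamical input is Corollary~\ref{Radial}. Every point of the Julia set of an entire function either lies on the boundary of some Fatou component or is \textit{buried}, and by Lemma~\ref{movesbackwards} every buried point in $A$ lies in the set $Y$. Since the Fatou components form a countable collection, I would write
$$\cJ(f) \subset \Big(\bigcup_{\omega} \pa \omega\Big) \cup Y,$$
where $\omega$ ranges over Fatou components, and bound each piece separately.

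For the Fatou boundaries I would split into two families. First, the escaping components: by Theorem~\ref{C1} and Lemma~\ref{OmegaKProp2} each $\Omega_k$ has $C^1$ inner and outer Jordan boundary, and via Lemma~\ref{distortion} the same holds for every $k$-type Fatou component since these are conformal images of $\Omega_k$; rectifiable curves have zero planar measure. Second, the components $B_f$ and its preimages under the entire map $f$: from the construction in Section 6, $\pa B_f = J_f$ is the image of the quasicircle $\cJ(z^2+c)$ under a quasiconformal mapping, and $\Hdim(\cJ(z^2+c)) = s < 2$ so $\cJ(z^2+c)$ has zero area; quasiconformal maps are absolutely continuous with respect to planar measure, so $J_f$ is null, and preimages of planar-null sets under the non-constant entire function $f$ remain null. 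A countable union of such boundaries is therefore null.

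For $Y$ the technical work has already been carried out: I would choose $\epsilon_0 > 0$ small enough that $s + \epsilon_0 < 2$ (possible because $s \in (1,2)$) and apply Corollary~\ref{Radial} to conclude $\Hdim(Y) \leq s + \epsilon_0 < 2$, so $Y$ has zero two-dimensional Hausdorff measure and in particular zero planar Lebesgue measure. The main obstacle was resolved upstream in the proof of Theorem~\ref{S-Sum}; what remains here are only the standard topological decomposition of $\cJ(f)$ into Fatou boundaries and buried points, and the elementary observation that Hausdorff dimension strictly less than $2$ implies area zero.
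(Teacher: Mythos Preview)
Your proof is correct and follows essentially the same approach as the paper's: decompose $\cJ(f)$ into the buried set $Y$ (handled by Corollary~\ref{Radial}), countably many $C^1$ boundary curves of escaping components, and countably many preimages of the quasicircle $J_f$ of dimension $<2$, each of which has zero area. The paper's proof is a one-sentence summary of exactly this decomposition; your version simply fills in the justifications more explicitly.
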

\begin{proof}
	The Julia set is the disjoint union of the the set of points that move backwards infinitely often, countably many $C^1$ curves, and countably many quasicircles with dimension strictly less than $2$. All of these components have zero measure. 
\end{proof}

\section{A Detailed Description of the Dynamics of $f$}
We can now offer a complete description of the Fatou and Julia set, along with several other dynamically interesting facts.

Recall that for an entire function $f$, we define the escaping set as 
$$I(f) = \{z: |f^n(z)| \ra \infty\}.$$
Choose some number $S_0$ so that there exists $z$ with $|z| = S_0$ so that $z \in I(f)$ (for example, choose $S_0$ so that $|z|=S_0 \subset B_1$). Then define inductively
$$S_{n+1} = \max_{|z|=S_n} |f(z)|.$$
We define the fast escaping set as
$$A(f) = \{z: \, \mbox{there exists $k \geq 0$ so that} \, |f^{n+k}(z)| \geq S_n \, \mbox{for all $n \geq 0$}\}.$$
We define the bungee set as 
$$BU(f) = \{z: \mbox{there exists $n_k$ and $n_j$ so that} \, |f^{n_k}(z)| \ra \infty \, \mbox{and} \, f^{n_j}(z) \, \mbox{is bounded}\}.$$
Lastly, we define the bounded orbit set as 
$$BO(f) = \{z: f^n(z) \, \mbox{is bounded}\}.$$

Every point $z \in \bC$ is either in $I(f)$, $BU(f)$, or $BO(f)$. However, it is clear that if $z$ moves backwards infinitely often, then $z$ cannot be in the fast escaping set. 

\begin{corollary}
The sets $I(f)\setminus A(f)$, $BU(f)$, and $BO(f) \setminus C$ all have Hausdorff dimension $\leq s + \epsilon_0<2$.
\end{corollary}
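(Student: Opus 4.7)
The plan is to show each of the three sets $I(f) \setminus A(f)$, $BU(f)$, and $BO(f) \setminus C$ is contained in the set $Y$ of points whose orbit moves backwards in the $A_k$'s infinitely often. Since $\Hdim(Y) \leq s + \epsilon_0$ by Corollary \ref{Radial}, monotonicity of Hausdorff dimension then yields the bound. I would carry this out in two stages: first showing that for each of the three sets the orbit of $z$ must stay in $A = \bigcup_{k \in \bZ}(A_k \setminus C)$, and then upgrading this to membership in $Y$.

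For the first stage, I would use Theorem \ref{itinerary}: if $f^n(z) \in B_k$ for some $n$ then $f^{n+j}(z) \in B_{k+j}$ for all $j \geq 0$, so the orbit escapes to $\infty$ with no bounded subsequence. This immediately rules out points in $BU(f)$ and $BO(f) \setminus C$ from ever landing in a $B_k$, and for $BO(f) \setminus C$ the defining complement also rules out landing in $K_f$. For $z \in I(f) \setminus A(f)$ the argument is more delicate: the explicit formulas of Section 7, together with the super-exponential growth of $\{R_k\}$ from Corollary \ref{Rk2}, show that on $B_k$ the modulus $|f(z)|$ is comparable to the maximum modulus $M(|z|, f)$, so an orbit that eventually enters and stays in the $B_k$'s grows at least as fast as the iterated maximum modulus sequence $\{S_n\}$ and therefore belongs to $A(f)$, contrary to assumption. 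Hence $z \in A$ in all three cases.

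For the second stage, suppose $z \in A$ moves forward eventually always, so $f^n(z) \in A_{k_n}$ with $k_{n+1} \geq k_n + 1$ for all large $n$. Then $|f^n(z)|$ is comparable to $R_{k_n}$, which grows super-exponentially by Corollary \ref{Rk2}. This is incompatible with $BO(f)$ and with the bounded subsequence required for $BU(f)$. Furthermore, the sequence $\{S_n\}$ defining $A(f)$ grows at a rate comparable to $R_{n+k_0}$ for some fixed $k_0$ (depending on the choice of $S_0$), so $|f^n(z)| \geq S_n$ for all large $n$ up to a fixed shift, placing $z$ in $A(f)$ and excluding it from $I(f) \setminus A(f)$. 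Combining the two stages, every $z$ in one of the three sets moves backwards infinitely often, hence lies in $Y$.

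The main obstacle I expect is the careful comparison between the sequence $\{S_n\}$ used to define $A(f)$ and the orbit's growth both in the $B_k$'s and in the forward-moving portions of the $V_k$'s. The sequence $\{S_n\}$ depends on the basepoint $S_0$ and is defined through the global maximum modulus of $f$, whereas the orbit estimates come from the explicit formulas of Lemmas \ref{A_k} and \ref{Uk}; one must verify that the orbit in these regions is in fact tracked closely enough by $S_n$ so that fast escaping follows. Once this comparison is secure, the dimension bound follows immediately from Corollary \ref{Radial} and monotonicity of $\Hdim$.
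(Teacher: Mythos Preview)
Your approach is exactly what the paper intends: the corollary is stated without proof, as an immediate consequence of Corollary~\ref{Radial}, so its content is precisely the inclusion of the three sets in $Y$ that you spell out, and your two-stage argument is the natural verification of that inclusion.

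One remark on the obstacle you flag. The claim that $S_n \asymp R_{n+k_0}$ is slightly too strong and will not hold in general, but only the upper bound $S_n \le R_{n+k_0}$ is needed, and that follows directly from Theorem~\ref{itinerary}. With the paper's choice of $S_0$ so that $\{|z|=S_0\}\subset B_1$, the inclusion $f(B_k)\subset B_{k+1}$ gives inductively $\{|z|=S_n\}\subset B_{n+1}$, hence $S_n \le R_{n+2}/4$. A forward-moving orbit in $A$ lies in $V_{k_0+n}$ (by the proof of Theorem~\ref{C1}), so $|f^n(z)|\ge \tfrac{3}{2}R_{k_0+n}$, and an orbit entering $B_k$ satisfies $|f^n(z)|\ge 4R_{k_0+n}$; in either case $|f^{n+k}(z)|\ge R_{n+2}\ge S_n$ once $k\ge 2-k_0$, placing $z$ in $A(f)$. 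So the comparison you were worried about reduces to this one-sided estimate, and no delicate tracking of $M(r,f)$ is required.
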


It turns out all the points in the corollary above compose the entirety of of the set $Y$ of points that move backwards infinitely often. 

\begin{corollary}
	\label{movesbackwards2}
A point $z$ moves backwards infinitely often if and only if it is buried.
\end{corollary}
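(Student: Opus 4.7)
The forward direction---buried implies moving backward infinitely often---is Lemma \ref{movesbackwards}. For the converse I would proceed by contradiction. Suppose $z$ moves backward infinitely often; then its entire forward orbit lies in $\bigcup_{k \in \bZ} A_k$ and avoids $K_f$ together with all of its preimages, so $z \in A$ and in particular $z \notin C$. Assume toward a contradiction that $z \in \partial \omega$ for some Fatou component $\omega$.

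Two cases arise. If $\omega$ is eventually mapped into the attracting basin $B_f$, then $\omega \subset C$ and $\partial \omega \subset J_f \subset K_f \subset C$, forcing $z \in C$ and contradicting $z \in A$. Otherwise $\omega$ is an escaping Fatou component, which by the classification established earlier in Section 14 must be of $k$-type for some $k \geq 1$: there exists $m \geq 0$ such that $f^m : \omega \to \Omega_k$ is conformal. This map extends to the closure, giving $w := f^m(z) \in \partial \Omega_k$.

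Next I would track the forward orbit of $w$ using the annular placement results of Section 9. Theorem \ref{C1} together with Lemma \ref{OmegaKProp2} places the outer boundary of $\Omega_j$ in $A_j$ and its inner boundary in $A_{j-1}$, while the discussion preceding Definition 9.12 locates the hole boundaries $\partial \Omega_j^a$ in the petals $R_j \cdot \Omega_{n_j}^p \subset A_j$ and shows that $f$ maps them onto the outer boundary of $\Omega_j$. Combined with $f(\Omega_j) = \Omega_{j+1}$ from Lemma \ref{OmegaProp1}, after at most one transitional iterate (needed only if $w$ happens to lie on a hole boundary, in which case the next iterate stays in the same annulus before entering the outer boundary), the orbit of $w$ lies on outer (respectively inner) boundaries of $\Omega_{k'}, \Omega_{k'+1}, \Omega_{k'+2}, \ldots$ whose annular indices $A_{k'}, A_{k'+1}, A_{k'+2}, \ldots$ strictly increase. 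Consequently, from that iterate on, $z$ never moves backward, so it moves backward only finitely many times---a contradiction.

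The main obstacle is the bookkeeping for hole boundaries, where the orbit temporarily remains in the same annulus $A_j$ rather than strictly advancing, and the appeal to the classification statement that every escaping Fatou component is of $k$-type, which I would take as input from earlier in Section 14.
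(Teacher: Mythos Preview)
Your argument has a circularity problem. You invoke the classification that every escaping Fatou component is of $k$-type for some $k$, saying you ``would take [it] as input from earlier in Section 14.'' But in the paper that classification is Corollary~14.3, which appears \emph{after} the present statement and whose proof explicitly uses it: the paper writes ``by Lemma~\ref{movesbackwards2}, the boundary moves backwards finitely often.'' So you cannot assume the $k$-type classification here without begging the question.

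The paper's proof sidesteps this entirely by appealing to the dimension estimate already established in Sections~10--13 (recorded as Corollary~\ref{Radial} and restated as Corollary~14.1): the set $Y$ of points moving backwards infinitely often has Hausdorff dimension at most $s+\epsilon_0<2$. If $z$ moves backwards infinitely often and lies on $\partial\omega$ for some Fatou component $\omega$ not in $C$, then $\omega$ itself moves backwards infinitely often (each backwards step of $z$ happens inside a petal, and by Lemma~\ref{diam} the Julia set portion in that petal---hence the adjacent piece of $\omega$---is trapped there too). Thus every point of the open set $\omega$ would lie in $Y$, contradicting $\dim Y<2$. This measure-theoretic shortcut is what lets the paper prove Corollary~\ref{movesbackwards2} first and then deduce the $k$-type classification from it, rather than the other way around.

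If you want to salvage a direct orbit-tracking argument, you would need to prove independently---without the $k$-type classification---that an escaping Fatou component touching $z$ eventually has monotone annular indices. That is essentially what Corollary~14.3 does, and it needs the present result as input.
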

\begin{proof}
By Lemma $\ref{movesbackwards}$, it only remains to show that moving backwards infinitely often implies the point is buried. A point that moves backwards infinitely often cannot be in the set $C$, the set of all preimages of the filled Julia set $K_f$. So suppose that $z$ is on the boundary of some other Fatou component of $f$. Then, this Fatou component must also move backwards infinitely often. By the corollary above, this is impossible. 
\end{proof}

\begin{corollary}
	Every escaping Fatou component is of $k$-type for some unique $k$.
\end{corollary}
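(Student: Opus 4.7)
The plan is to take an escaping Fatou component $\omega$, set $\omega_n := f^n(\omega)$, and show that (i) some iterate $\omega_m$ equals one of the distinguished components $\Omega_k$, and (ii) $f^m$ is conformal on $\omega$. For existence of $m$, since $\omega$ escapes, for all large $n$ the component $\omega_n$ lies outside $B(0, R_1/4)$ and so is contained in the union of the annuli $A_j$ and $B_j$. Each $B_j$ is contained in the Fatou set and is connected, so if any $\omega_n$ meets $B_j$ then $B_j \subset \omega_n$, which by the very definition of $\Omega_{j+1}$ (the Fatou component containing $B_j$) forces $\omega_n = \Omega_{j+1}$.

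Suppose toward contradiction that no $\omega_n$ meets any $B_j$, so each $\omega_n \subset A_{k_n}$ for some $k_n$. Since $\Omega_{k_n}$ has inner boundary $\Gamma_{k_n-1} \subset A_{k_n-1}$ and therefore is not a subset of $A_{k_n}$, we have $\omega_n \neq \Omega_{k_n}$. By Lemma \ref{JuliaPlace} the Julia set inside $A_{k_n}$ is confined to $V_{k_n} \cup R_{k_n} \cdot \Omega_{n_{k_n}}^p$, and combined with the proof of Theorem \ref{C1} this forces $\omega_n$ to lie in one of the holes $\Omega_{k_n}^a$. But $f$ maps $\Omega_{k_n}^a$ conformally onto the hull $\hat\Omega_{k_n} \subset B(0, 5R_{k_n}/2) \subset \{|z| < R_{k_n+1}/4\}$, so $\omega_{n+1} \subset A_j \cup B_j$ with $j \le k_n$. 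Thus the sequence $k_n$ is nonincreasing and bounded, contradicting the escape of $\omega$. Hence some $\omega_M$ equals $\Omega_k$, and I take $m$ to be the smallest such index.

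For conformality of $f^m|_\omega$, I show each $\omega_j$ with $0 \le j < m$ contains no critical point of $f$. By the critical-point lemma at the end of Section 9, every critical point of $f$ in the Fatou set lies in $B_f$ or in some $A_k$; inside $A_k$ such a critical point cannot lie in $V_k$ (Corollary \ref{CriticalVk}) nor in any hole $\Omega_k^a$ (since $f|_{\Omega_k^a}$ is conformal by Lemma \ref{distortion}), so it must lie in $\Omega_k$ itself. For $j < m$ the component $\omega_j$ escapes, hence is not $B_f$ or one of its preimages, and by minimality of $m$ is not any $\Omega_{k'}$; so $\omega_j$ contains no critical point of $f$. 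Moreover the same dichotomy as in the second paragraph places $\omega_j$ inside some $\Omega_{k_j'}^{a_j'}$, on which $f$ is conformal, so $f|_{\omega_j}$ is injective, and composing gives $f^m|_\omega$ conformal onto $\Omega_k$.

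The main obstacle I expect is the structural classification in the contradiction step: namely, pinning down that a Fatou component strictly contained in $A_{k_n}$ and distinct from $\Omega_{k_n}$ necessarily sits inside one of the petal-holes $\Omega_{k_n}^a$, rather than in some other region between the Julia curves $\Gamma_k$. This relies on Lemma \ref{JuliaPlace} together with the observation from the proof of Theorem \ref{C1} that the foliation-type annuli $\Gamma_{k_n,n}$ bound no Fatou component other than a piece of $\Omega_{k_n}$. Uniqueness of $k$ follows from the paper's earlier remark on the definition of $k$-type: if $\omega$ were simultaneously of $k$-type via $m$ and of $k'$-type via $m' > m$, then $f^{m'-m}$ would have to be conformal on $\Omega_k$; but $\Omega_k$ contains the critical points of $f$ located in $A_k$, so $f|_{\Omega_k}$ is branched of degree $> 1$, a contradiction that forces $m = m'$ and $k = k'$.
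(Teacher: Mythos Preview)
Your argument has a genuine gap in the contradiction step. You assert that every hole $\Omega_{k_n}^a$ of $\Omega_{k_n}$ is mapped by $f$ conformally onto $\hat\Omega_{k_n}$, and hence $k_{n+1}\le k_n$. This is only true for the \emph{petal} holes, i.e.\ those $\Omega_{k_n}^a \subset R_{k_n}\cdot\Omega_{n_{k_n}}^p$ (the case treated just after Lemma~\ref{distortion}). But $\Omega_{k_n}$ also has holes at higher levels $j>k_n$, sitting inside $V_{k_n}$ near $\Gamma_{k_n}$ (see Lemma~14.7(2)); on $V_{k_n}$ the map $f$ behaves like $z^{2n_{k_n}}$ and sends such a hole into a hole of $\Omega_{k_n+1}$, so $k_{n+1}=k_n+1$. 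Consequently the sequence $(k_n)$ is \emph{not} nonincreasing, and your contradiction with escape fails. In fact one can build itineraries that climb along $V$-holes, drop by one at a petal, climb again, etc., with $k_n\to\infty$, so nothing purely combinatorial about the indices rules this out.

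The paper closes exactly this gap, but not by a direct dynamical argument: it invokes Corollary~\ref{movesbackwards2}, whose proof rests on the dimension estimate $\Hdim(Y)\le s+\epsilon_0<2$ (Corollary~\ref{Radial}). Concretely, if no iterate of $\omega$ equals some $\Omega_l$, then either the orbit eventually stays in the $V_k$'s (forcing $\omega\subset\Gamma_k$, impossible since $\omega$ is open), or it lands in petal holes infinitely often, hence moves backwards infinitely often, hence $\omega\subset Y$; but $\omega$ is open and $Y$ has dimension $<2$. So your strategy can be repaired, but only by importing the measure-theoretic input you were trying to avoid, and at that point it collapses to the paper's proof. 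Your uniqueness and conformality arguments are fine.
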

\begin{proof}
	Uniqueness has already been discussed. Let $\omega$ be an escaping Fatou component, but suppose it is not of $k$-type. Then its boundary is in the Julia set, and by Lemma $\ref{movesbackwards2}$, the boundary moves backwards finitely often. So it suffices to deal with the case that $\omega$ is a component which never moves backwards. In this case, since the boundary of $\omega$ is in the Julia set and not of $k$-type (and therefore not one of the $\Omega_k$'s), Lemma $\ref{JuliaPlace}$ says that it is in a petal $R_k \cdot \Omega_{n_k}^p$, or in $V_k$. If it is in a petal, it moves backwards, so $\omega$ is in some $V_k$, and must remain in all future $V_{k+j}$'s since it never moves backwards.
\end{proof}

\begin{corollary}
	The boundary of any escaping Fatou component is the union of $C^1$ curves.
\end{corollary}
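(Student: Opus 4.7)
The plan is to reduce the statement to the case of the distinguished components $\Omega_k$, by pulling back along the conformal map defining $k$-type, and then to handle $\partial \Omega_k$ component by component.

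First I would establish that the innermost and outermost boundary components of every $\Omega_k$ are $C^1$ Jordan curves. Theorem $\ref{C1}$ already gives this for the innermost boundary, and Lemma $\ref{OmegaKProp2}$ identifies the outermost boundary of $\Omega_k$ with the innermost boundary of $\Omega_{k+1}$, so both are $C^1$. To handle the remaining boundary components of $\Omega_k$, i.e.\ the boundaries of the holes $\Omega_k^a$, I would invoke Lemma $\ref{distortion}$: on the ball $B$ furnished there, $f$ is conformal, and it maps $\Omega_k^a$ biholomorphically onto the topological disk bounded by the outermost boundary of $\Omega_k$. That outermost boundary is a $C^1$ Jordan curve by the previous step, and $C^1$-smoothness of a Jordan curve is preserved under a local biholomorphism across it, so $\partial \Omega_k^a$ is $C^1$ as well. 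Taking the union over all hole components (and the inner/outer boundaries) shows that $\partial \Omega_k$ is a disjoint union of $C^1$ Jordan curves for every $k$.

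Now let $\omega$ be an arbitrary escaping Fatou component. By the preceding corollary, $\omega$ is of $k$-type for some unique $k$, so there exists $m \geq 0$ with $f^m : \omega \to \Omega_k$ conformal. The claim is that $f^m$ is in fact locally biholomorphic at every point of $\partial \omega$, which is what lets the $C^1$ structure of $\partial \Omega_k$ be pulled back. For this, recall that every critical point of $f$ lies either in the basin $B_f$ or in some $A_k$, and in both cases it lies in the Fatou set (this is the last lemma of Section $9$). Since $\partial \omega \subset \cJ(f)$ and the Julia set is forward invariant, no iterate $f^j(z)$ for $z \in \partial \omega$ and $0 \leq j < m$ is a critical point of $f$, so $(f^m)'$ is nowhere zero on $\partial \omega$. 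Hence $f^m$ is a local biholomorphism in some open neighborhood of each point of $\partial \omega$, and pulls back the $C^1$ Jordan curves making up $\partial \Omega_k$ to $C^1$ Jordan curves comprising $\partial \omega$.

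The only genuine step is the second paragraph, where one has to treat the hole boundaries $\partial \Omega_k^a$; this is the one place where a boundary component of $\Omega_k$ is not literally a boundary of some other $\Omega_j$, and instead one appeals to the local conformality of $f$ furnished by Lemma $\ref{distortion}$. Everything else is a direct assembly of previously established facts: Theorem $\ref{C1}$, Lemma $\ref{OmegaKProp2}$, the classification of escaping components as $k$-types, and the absence of critical points on $\cJ(f)$.
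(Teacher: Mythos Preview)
Your proof is correct and follows essentially the same route as the paper: reduce to the $\Omega_k$, then transport the $C^1$ regularity of their inner/outer boundaries (Theorem~\ref{C1}, Lemma~\ref{OmegaKProp2}) along local biholomorphisms. The only cosmetic difference is that for the hole boundaries $\partial\Omega_k^a$ you push forward by one application of $f$ via Lemma~\ref{distortion} onto the already-$C^1$ outermost boundary of $\Omega_k$, whereas the paper instead notes that each such curve is simultaneously the outer boundary of a $j$-type component sitting inside the hole and pulls back from $\Omega_j$; you also make explicit the ``no critical points on $\cJ(f)$'' step that the paper's reduction leaves implicit.
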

\begin{proof}
	All escaping components are of $k$-type for some $k$, so it suffices to show this for $\Omega_k$. Since all the boundary components of $\Omega_k$ are escaping by Lemma $\ref{OmegaProp1}$, they are the boundaries of components of $j$-type for some $j$. The components of $j$-type map conformally onto $\Omega_j$, so their boundaries are also $C^1$ smooth, so all of the boundary components of $\Omega_k$ are indeed $C^1$ smooth.
\end{proof}

We can now offer a full description of the Julia set. 

\begin{theorem}
	\label{JuliaSet}
	The Julia set can be decomposed into three pieces
	\begin{enumerate}
		\item The buried points of $f$. Equivalently, the set $Y$ of points which move backwards infinitely often. 
		\item The $C^1$ components that escape to $\infty$. These components are always the boundary component of some Fatou component of $k$-type.
		\item Preimages of the Julia set of the quadratic-like map $f$.
	\end{enumerate} 
\end{theorem}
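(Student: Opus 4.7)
The plan is to show that every $z \in \cJ(f)$ belongs to exactly one of the three listed classes by first splitting points of $\cJ(f)$ according to whether they are buried or lie on the boundary of some Fatou component, and then classifying the Fatou components themselves.

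I would begin with the standard dichotomy that each $z \in \cJ(f)$ either is buried or lies on $\pa \omega$ for some Fatou component $\omega$. By Corollary \ref{movesbackwards2}, the buried points are exactly the set $Y$ of points that move backwards infinitely often, which gives case $(1)$.

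For $z \in \pa \omega$ with $\omega$ a Fatou component, I would track the orbit of $\omega$ using the partition of $\bC$ introduced in Section $9$ together with Theorem \ref{itinerary}. Since $\omega$ is connected and the annuli $A_k$, $B_k$ are open and disjoint, the itinerary of $\omega$ is well-defined, and there are only two possibilities: either $\{f^n(\omega)\}$ eventually lies in the chain $B_k \subset B_{k+1} \subset \dots$ and hence escapes, or the orbit eventually enters $K_f$. In the second case, since $\omega$ is open and $K_f = J_f \cup B_f$ has Fatou interior equal to $B_f$, the orbit must actually enter the basin $B_f$; consequently $\omega$ is a preimage of $B_f$ and $\pa \omega$ is a preimage of $J_f = \pa B_f$, which is case $(3)$. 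In the first case, the escaping-components corollary earlier in this section identifies $\omega$ as being of $k$-type for a unique $k$, and the $C^1$-boundary corollary shows that $\pa \omega$ consists of $C^1$ curves; each such curve is mapped by a suitable iterate of $f$ onto a boundary curve of $\Omega_k$, which escapes by Lemma \ref{OmegaProp1}, so $\pa \omega$ escapes as well, giving case $(2)$.

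Finally, I would observe that the three classes are pairwise disjoint: buried points by definition lie on no Fatou boundary, and the two boundary-based classifications are separated by whether the bounding Fatou component escapes or has bounded orbit. The main obstacle is the second step above, namely ruling out Fatou components whose orbit oscillates among the $A_k$'s indefinitely without escaping through the $B_k$'s and without being captured by $K_f$. The strict inclusions $f(B_k) \subset B_{k+1}$ and $A_{k+1} \subset f(V_k) \subset f(A_k)$ from Theorem \ref{itinerary}, together with the fact that every Fatou point either lands in $B_f$ (and then stays there) or must eventually leave the $A_k$'s through some $B_k$, supply the required dichotomy; once it is in place, the remaining assertions are direct applications of results already established in Sections $6$ and $9$.
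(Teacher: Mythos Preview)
Your overall structure is exactly what the paper intends: the theorem is stated there without proof, as a synthesis of Corollaries~\ref{movesbackwards2}, the ``every escaping Fatou component is of $k$-type'' corollary, and the $C^1$-boundary corollary. Your split into buried versus boundary points, and then into escaping versus captured Fatou components, is the right skeleton.

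The gap is in your final paragraph. You correctly isolate the only real obstacle---ruling out a Fatou component $\omega$ whose orbit oscillates among the $A_k$'s forever without entering any $B_k$ and without being absorbed by $K_f$---but your resolution is circular. You write that Theorem~\ref{itinerary} ``together with the fact that every Fatou point either lands in $B_f$ \ldots\ or must eventually leave the $A_k$'s through some $B_k$'' yields the dichotomy. That ``fact'' \emph{is} the dichotomy; Theorem~\ref{itinerary} only records the mapping behavior $f(B_k)\subset B_{k+1}$ and $A_{k+1}\subset f(A_k)$, and does not by itself exclude a wandering component sitting in $BU(f)$ or in $BO(f)\setminus C$.

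The paper's mechanism for closing this is the dimension bound already in hand. If such an oscillating component $\omega$ existed, then every point of $\omega$ moves backwards infinitely often, so $\omega\subset Y$ (equivalently $\omega\subset \bigl(I(f)\setminus A(f)\bigr)\cup BU(f)\cup \bigl(BO(f)\setminus C\bigr)$), and by Corollary~\ref{Radial} (or the corollary immediately preceding Corollary~\ref{movesbackwards2}) this set has Hausdorff dimension at most $s+\epsilon_0<2$. Since $\omega$ is open it has dimension $2$, a contradiction. An equivalent route, closer to what you already cite: any boundary point of such an $\omega$ shares the itinerary of $\omega$ through the $A_k$'s, hence moves backwards infinitely often, hence is buried by Corollary~\ref{movesbackwards2}---contradicting that it lies on $\partial\omega$. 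Replace your asserted ``fact'' with either of these arguments and the proof is complete.
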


Note that it follows from this theorem that the Hausdorff dimension of the Julia set is upper bounded by $s + \epsilon_0$. Define the \textit{radial Julia set} to be the set of points $z \in \cJ(f)$ such that there exists a $\delta$ so that for infinitely many $n$, the disk of radius $\delta$ around $z$ may be pulled back univalently via $f^n$ to the component of $f^{-n}(B(z,\delta))$ containing $z$. We define the \textit{hyperbolic dimension} of $f$ to be the supremum of the Hausdorff dimensions of all hyperbolic subsets of the Julia set (see Definition $2.8$ in \cite{RempeRad}). In \cite{RempUrb}, Rempe-Gillen and Urbanski prove that the Hausdorff dimension of the radial Julia set is equal to the Hausdorff dimension of the subset of points in the radial Julia set who have dense orbit in $\cJ(f)$ (p.1982, Theorem $1.4$). In \cite{RempeRad}, Rempe-Gillen proves that the dimension of the radial Julia set coincides with the hyperbolic dimension. 

\begin{corollary}
$\Hdim(BU(f))) \in (s, s+\epsilon_0)$.
\end{corollary}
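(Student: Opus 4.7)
The upper bound $\Hdim(BU(f)) < s + \epsilon_0$ is immediate from Corollary $14.1$: run the construction with a smaller parameter $\epsilon_0' < \epsilon_0$ (which only requires choosing $R$ larger), giving $\Hdim(BU(f)) \leq s + \epsilon_0' < s + \epsilon_0$.

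For the lower bound $\Hdim(BU(f)) > s$, the plan is to exhibit a hyperbolic invariant subset of $\cJ(f)$ of dimension strictly greater than $s$, and then transfer this estimate to $BU(f)$ via the radial-Julia machinery cited in the paragraph immediately preceding the corollary. By Corollary $6.4$, the polynomial-like Julia set $J_f$ is a compact hyperbolic invariant subset of $\cJ(f)$. By the Shishikura--Sullivan theorems recalled in Section $5$, the map $c \mapsto \Hdim(\cJ(z^2+c))$ attains every value in $(1,2)$ as $c$ varies through the main cardioid of the Mandelbrot set. In the construction, in place of selecting $c$ with $\Hdim(\cJ(z^2+c)) = s$, we instead choose $c$ so that $\Hdim(\cJ(z^2+c)) = s'$ for some fixed $s' \in (s, s + \epsilon_0/2)$. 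Applying Theorem $6.2$ with perturbation parameter smaller than $\min(s'-s, \epsilon_0/2)$ then yields $\Hdim(J_f) = t$ with $s < t < s + \epsilon_0$, so the hyperbolic dimension $\dim_{\text{hyp}}(f)$ satisfies $\dim_{\text{hyp}}(f) \geq t > s$.

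By Rempe-Gillen \cite{RempeRad}, the hyperbolic dimension coincides with the Hausdorff dimension of the radial Julia set $J_r(f)$, and by Rempe-Gillen and Urbanski \cite{RempUrb}, this in turn equals the Hausdorff dimension of the subset of radial points whose forward orbit is dense in $\cJ(f)$. Any point with dense forward orbit in $\cJ(f)$ lies in $BU(f)$: density forces the orbit to visit a fixed bounded neighborhood of $J_f$ infinitely often (producing a bounded subsequence) and to visit neighborhoods of points of arbitrarily large modulus in $\cJ(f)$ infinitely often (producing an unbounded subsequence). Such points of arbitrarily large modulus exist in $\cJ(f)$ because, by Theorem $\ref{JuliaSet}$, $\cJ(f)$ contains the escaping outermost boundary circle of each $\Omega_k$, and these have radii tending to infinity with $k$. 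Combining these facts gives $\Hdim(BU(f)) \geq t > s$.

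The main obstacle is the strict inequality $> s$ rather than $\geq s$; this is resolved by the extra freedom in the choice of $c$, which lets us arrange for the underlying quadratic Julia set to have dimension slightly above $s$, a property that Theorem $6.2$ preserves for $J_f$. The upper bound $s + \epsilon_0$ survives this reparametrization because both $s' - s$ and the perturbation error are chosen smaller than $\epsilon_0/2$, so $t \in (s, s + \epsilon_0)$. Everything else in the argument is a direct application of results already in the literature and in earlier sections of this paper.
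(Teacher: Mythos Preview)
Your argument follows the same route as the paper: the upper bound comes from $BU(f)\subset Y$ (your citation of Corollary~14.1), and the lower bound comes from the chain
\[
\Hdim(J_f)\;\leq\;\dim_{\mathrm{hyp}}(f)\;=\;\Hdim(J_r(f))\;=\;\Hdim\{\text{radial points with dense orbit}\}\;\leq\;\Hdim(BU(f)),
\]
using the Rempe-Gillen and Rempe-Gillen--Urba\'nski results exactly as the paper does.

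The only substantive difference is your reparametrization to obtain \emph{strict} inequalities. The paper's own proof simply records $\Hdim(BU(f))\geq s$ (from $J_f$ being a hyperbolic set) and $\Hdim(BU(f))\leq s+\epsilon_0$ (from $BU(f)\subset Y$), and does not argue for strictness at either endpoint, despite the open-interval statement. Your trick of shifting $c$ so that $\Hdim(\cJ(z^2+c))=s'>s$, and shrinking $\epsilon_0$ to $\epsilon_0'$, is a legitimate way to force strictness, provided one tracks that after the shift the ``$s$'' appearing in Corollary~14.1 becomes $s'$: the upper bound you actually get is $\Hdim(BU(f))\leq s'+\epsilon_0'$, so you need $s'-s+\epsilon_0'<\epsilon_0$, which your choices allow. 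The paper does not do this and is content with the closed interval.
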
 
\begin{proof}
The set $Y$ of points that go backwards infinitely often is certainly inside of the radial Julia set, and contains slow escaping, bounded orbit, and points in the Bungee set. By Theorem $\ref{JuliaSet}$, the only points that can possible have dense orbit belong to the set $Y$, and it is clear that points in $BO(f)$ and $I(f)$ cannot have dense orbits. Therefore in this setting, the set of points with dense orbits in the radial Julia set coincides with $BU(f)$. This gives the upper bound for $\Hdim(BU(f)))$.

Since the filled Julia set of the polynomial like mapping $f$, $K_f$, is clearly a hyperbolic set, we see that the hyperbolic dimension of $f$ is lower bounded by $s$. By our comments above, it follows that $\Hdim(BU(f)) \geq s$. 
\end{proof}

We conclude the section by recording a lemma describing the nice geometry of the round Fatou components $\Omega_k$, $k \geq 1$. The proof follows from some basic calculations using the fact that $f$ looks like a power mapping on some portions of $\Omega_k$, along with using Lemma $\ref{distortion}$. A similar discussion is found in Section $19$ of \cite{CB1}.

\begin{lemma}[The Shape of Round Fatou components]
	Choose some Fatou component $\Omega_k$, for $k \geq 1$. Define $d_j = 2(n_k + \dots + n_{j-1})$ for $j > k$. Then $\Omega_k$ has the following geometric properties
	\begin{enumerate}
		\item The inner and outer boundary components of $\Omega_k$ are $C^1$ curves arbitrarily close to round circles.
		\item For all $j \geq k$, there are $n_j \cdot 2^{d_j}$ many boundary components of $\Omega_k$ which lie distance approximately $R_k \cdot 2^{-d_j}$ from the outer boundary component of $\Omega_k$. The boundary components are approximately distance $R_kn_j^{-1}2^{-d_j}$ apart from each other and lie on an approximately round circle. Their diameter is $O(R_{j}^{-1})$. All boundary components of $\Omega_k$ arise in this manner for some $j$; we say that such a component is in the $j$th level of $\Omega_k$. 
		\item All of the boundary components of $\Omega_k$ are approximately round circles.
	\end{enumerate}
\end{lemma}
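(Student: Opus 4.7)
The plan is to prove the three parts in order, leveraging Section 9 and augmenting it with the local mapping analysis of Section 7. Part (1) is essentially in hand: the inner boundary of $\Omega_k$ coincides with $\Gamma_{k-1}$ from Theorem \ref{C1}, which is $C^1$ and, for $k-1 \geq 1$, approximates a round circle arbitrarily well once $\sum \epsilon_n$ is made small; by Lemma \ref{OmegaKProp2} the outer boundary of $\Omega_k$ is the inner boundary of $\Omega_{k+1}$, which shares the same two properties.

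For part (2), the plan is an induction on the level $j$, beginning with $j = k$. The base case identifies the first-level boundary components as the $n_k$ Julia-set boundaries of the holes $\Omega_k^a$ discussed after Lemma \ref{distortion}: by Lemma \ref{A_k}, $f(z) \approx C_k H_{n_k}(z/R_k)$ on $A_k$, so the $n_k$ zeros of $F_k$ lie at approximately $|z| = R_k$ at the $n_k$-fold symmetric petal centers, and by Lemma \ref{diam} each resulting hole has diameter $O(R_k^{-2}/2^{n_k})$, well within $O(R_k^{-1})$. Their distance from the outer boundary at $|z| \approx 2R_k$ is approximately $R_k$, matching $R_k \cdot 2^{-d_k}$ with $d_k = 0$, and the $n_k$-fold symmetry gives the claimed equispacing on a round circle. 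For the inductive step, I would use the power-map approximation $f(z) \approx C_k(z/R_k)^{2n_k}$ on $U_k$ from Section 7 to pull back the level-$j$ structure of $\Omega_{k+1}$ to the annulus near the outer boundary of $\Omega_k$, and simultaneously use the conformal map $f \colon \Omega_k^a \to \hat{\Omega}_k$ to transport the interior structure of $\hat{\Omega}_k$ recursively into each of the $n_k$ level-$k$ holes. Corollary \ref{RkCk}, which gives $|C_k| \cdot 2^{2n_k} \approx R_{k+1}$, is what converts the local degree of the power map into the distance factor $2^{-d_j}$ from the outer boundary, and the diameter bound $O(R_j^{-1})$ follows from Lemma \ref{distortion} and Lemma \ref{diam} applied at the appropriate scale.

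Part (3) follows from iterating the proof of Theorem \ref{C1}: each boundary component is an iterated $f$-preimage of the outer boundary of some later $\Omega_{j'}$, so Lemma \ref{AnnulusMap} applied at each stage yields angle deviations of $O(\epsilon_n)$; summability of $\{\epsilon_n\}$ produces $C^1$ limit curves, and bounded distortion from Lemma \ref{distortion} keeps them close to round circles. The main obstacle will be the bookkeeping in part (2): combining the power-map preimages near the outer boundary with the recursive conformal copies inside the $\Omega_k^a$'s, and correctly reproducing the large factor $2^{d_j}$ at each new level, requires careful tracking of both the number and the angular location of preimages through many layers of the induction.
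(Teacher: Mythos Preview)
Your plan matches what the paper does: it gives no detailed argument and simply says the lemma ``follows from some basic calculations using the fact that $f$ looks like a power mapping on some portions of $\Omega_k$, along with using Lemma~\ref{distortion},'' referring to Section~19 of \cite{CB1} for the analogous computation. Parts~(1) and~(3) of your outline are exactly right, and your use of Corollary~\ref{RkCk}, Lemma~\ref{Uk}, and Lemma~\ref{diam} for part~(2) is the intended toolkit.

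There is, however, a conceptual slip in your inductive step for part~(2). The second mechanism you propose---using the conformal map $f\colon\Omega_k^a\to\hat\Omega_k$ to ``transport the interior structure of $\hat\Omega_k$ recursively into each of the $n_k$ level-$k$ holes''---does not produce boundary components of $\Omega_k$. Each region $\Omega_k^a$ is a \emph{complementary} component of $\Omega_k$, so it contributes exactly one boundary curve to $\partial\Omega_k$, namely its own outer boundary; whatever further structure sits inside $\Omega_k^a$ (the conformal copies of $\Omega_j$ for $j\le k$) belongs to other Fatou components and is invisible to $\partial\Omega_k$. All of the higher-level boundary components of $\Omega_k$ come from your first mechanism alone: the level-$j$ holes are exactly the $f^{\,j-k}$-preimages of the $n_j$ petals of $A_j$, pulled back through the degree-$2n_i$ power maps on $V_i$ for $i=k,\dots,j-1$, and they accumulate on the outer boundary $\Gamma_k$ as $j\to\infty$ rather than inside the first-level petals. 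Once you drop the recursion into the holes, the bookkeeping you were worried about becomes a single product of covering degrees along this chain.
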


\section{The Packing Dimension of $\cJ(f)$ is $< 2$}
In this section, we prove that the packing dimension of $\cJ(f)$ can be taken to be arbitrarily close to the Hausdorff dimension, and is therefore less than $2$. To accomplish this, we combine the techniques of the previous sections with the techniques used in \cite{CB1} that resulted in an estimate of packing dimension being $1$. Roughly put, we will decompose the compliment of the Julia set into three regions, and estimate the local upper Minkowski dimension using Theorem $\ref{VE}$.  The regions take the three following general forms, the first two being the most straightforward. We have inverse images of the basin $B_f$, and we also multiply connected Fatou components which are ``far away" from the inverse images of $B_f$ in the sense that these components are almost circular. These regions correspond to the components of $\Omega_k$-type (see Definition $9.7$) for sufficiently large $k$. When $k$ is much less than zero, components of $\Omega_k$ we see a third type of region. These components are far from circular, since their boundary curves accumulate onto the fractal boundary of the appropriate inverse image of $\pa B_f$.    

The following result will be useful in applying Theorem $\ref{VE}$. It follows from the results of Sullivan in \cite{DS1} (see Theorems $3$ and $4$). Recall that if $f$ is polynomial-like, $K_f$ denotes its filled Julia set. 
\begin{theorem}
	\label{Sul}
	Let $f: U \ra V$ be a hyperbolic polynomial-like map. Then we have $\Pdim(\pa K_f) = \Hdim(\pa K_f) = \UMdim(\pa K_f)$.
\end{theorem}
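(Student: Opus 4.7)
The plan is to prove something slightly stronger, namely that $J_f$ is Ahlfors $\delta$-regular for $\delta = \Hdim(J_f)$, which immediately yields $\Hdim(J_f) = \Pdim(J_f) = \UMdim(J_f)$. The argument mirrors Sullivan's treatment of hyperbolic rational maps: nothing in his proof uses that the map is globally defined on $\hat{\bC}$, only that one has uniform expansion on the Julia set together with bounded distortion for all inverse branches of iterates. Hyperbolicity in the polynomial-like sense supplies both. Concretely, I would first record that there exist constants $a > 1$ and $r_0 > 0$ so that $|(f^n)'(z)| \geq a^n$ for all $z \in J_f$ and so that the ball $B(z, 2r_0)$ contains no critical values of $f^n$. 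Consequently, for each $w \in f^{-n}(z)$ there is a univalent branch of $f^{-n}$ on $B(z, 2r_0)$ sending $z$ to $w$, and Koebe's distortion theorem supplies a uniform distortion bound on the smaller ball $B(z, r_0)$, independent of $n$, $z$, and $w$.

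Next I would construct a $\delta$-conformal measure $\mu$ on $J_f$ following Patterson--Sullivan. Fix a basepoint $z_0 \in J_f$, and for $s$ slightly above $\delta$ form
\[
\nu_{n,s} = Z_{n,s}^{-1} \sum_{w \in f^{-n}(z_0)} |(f^n)'(w)|^{-s} \delta_w, \qquad Z_{n,s} = \sum_{w \in f^{-n}(z_0)} |(f^n)'(w)|^{-s}.
\]
Taking weak-$*$ limits as $n \to \infty$ along a suitable sequence and then letting $s \searrow \delta$ produces a probability measure $\mu$ supported on $J_f$ satisfying $\mu(f(A)) = \int_A |f'|^{\delta}\, d\mu$ for every Borel set $A$ on which $f$ is injective. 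Existence of a nontrivial limit requires that $Z_{n,\delta}$ grow at the right exponential rate, which follows from the Bowen pressure formula applied to the hyperbolic system $(f, J_f)$. This is the step where the bulk of the work lives and is the main obstacle in adapting Sullivan's argument; fortunately, the needed thermodynamic formalism depends only on expansion and bounded distortion, both already established.

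Finally, conformality of $\mu$ combined with the distortion bound from the first step yields Ahlfors regularity: for any $z \in J_f$ and $r \leq r_0$, locate $n$ so that $|(f^n)'(w)|^{-1} \asymp r$ for some preimage $w \in f^{-n}(z_0) \cap J_f$ near $z$; then $B(z, r)$ is the image under a univalent, distortion-bounded branch of $f^{-n}$ of a ball of radius comparable to $r_0$ around $z_0$, and the conformal relation gives $\mu(B(z, r)) \asymp |(f^n)'(w)|^{-\delta} \asymp r^{\delta}$. A compact set carrying an Ahlfors $\delta$-regular probability measure has matching Hausdorff, packing, and upper Minkowski dimensions all equal to $\delta$, completing the proof. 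An alternative route worth mentioning is to invoke the straightening theorem to conjugate $f$ to a hyperbolic polynomial $P$ and apply Sullivan directly to $J_P$; however, the conjugacy is merely quasiconformal near $K_f$, so transferring $\Hdim$ across it is not automatic, and one is led back to essentially the same distortion-and-conformal-measure argument on the $f$-side.
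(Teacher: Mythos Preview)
The paper does not actually prove this theorem; it is quoted as a consequence of Sullivan's results in \cite{DS1} (Theorems~3 and~4 there), with no argument supplied. Your sketch is a faithful reconstruction of Sullivan's method---uniform expansion and Koebe distortion for inverse branches, construction of a $\delta$-conformal measure, and the deduction of Ahlfors $\delta$-regularity---carried out directly in the polynomial-like setting, so your approach is entirely consistent with what the paper is invoking.

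Two minor remarks. In the Ahlfors-regularity step it is cleaner to iterate the given point $z$ \emph{forward}: choose $n$ with $|(f^n)'(z)| \asymp 1/r$, so that $f^n$ sends $B(z,r)$ with bounded distortion onto a region comparable to a ball of fixed size around $f^n(z)$, and then apply conformality of $\mu$. Your formulation via a nearby preimage $w \in f^{-n}(z_0)$ can be made to work but is roundabout and, as written, conflates $w$ with $z$. Second, the double limit ``$n\to\infty$ then $s\searrow\delta$'' in your conformal-measure construction is slightly nonstandard; the usual Patterson--Sullivan route sums over all $n$ simultaneously to form a single series with critical exponent $\delta$ and then lets $s\searrow\delta$. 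Either way the thermodynamic input you name (the Bowen pressure formula for the expanding system $(f,J_f)$) is the correct tool. Your closing observation that straightening alone does not immediately transfer the result---because the conjugacy is merely quasiconformal---is also well taken.
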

In particular, the polynomial-like map $f(z)$ is hyperbolic since its critical point tends to an attracting fixed point, so its packing dimension and upper Minkowski dimension equal the Hausdorff dimension $s$ as well. 

In order to apply Theorem $\ref{VE}$, we need to decompose the Fatou components into simpler pieces. We collect the following lemmas proved in Section $20$ of \cite{CB1}. The first lemma will allow us to break the infinitely connected Fatou components into simpler, annular regions. See Figure $5$.
\begin{lemma}
	\label{Necklace}
	Let $\Omega$ be a bounded open set containing disjoint open subsets $\{\Omega_j\}$ so that $\Omega \setminus \cup_j \Omega_j$ has zero Lebesgue measure. Then for $t \in [1,2]$ we have 
	$$\sum_{Q \in W(\Omega)} \diam(Q)^t \leq \sum_j \sum_{Q \in W(\Omega_j)} \diam(Q)^t$$
	where $W(\Omega)$ represents a Whitney decomposition of $\Omega$. 
\end{lemma}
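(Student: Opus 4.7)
The plan is to convert each side of the inequality into an integral of $\dist(x,\partial)^{t-2}$ and then compare the integrals pointwise using the nesting $\Omega_j \subset \Omega$.

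First I would reinterpret the sums in terms of the distance to the boundary. For any bounded open $U$ with Whitney decomposition $W(U)$ obeying (\ref{Whit}), each cube $Q$ has $\diam(Q)$ comparable to $\dist(x,\partial U)$ for every $x \in Q$, and $\diam(Q)^2$ is comparable to the Lebesgue measure of $Q$. Combining these two facts,
$$\sum_{Q \in W(U)} \diam(Q)^t \asymp \int_U \dist(x,\partial U)^{t-2}\, dx,$$
with constants depending only on those in (\ref{Whit}). I would apply this comparison to $\Omega$ on the left-hand side of the lemma and to each $\Omega_j$ on the right-hand side.

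The key pointwise inequality is that for every $x \in \Omega_j$,
$$\dist(x,\partial \Omega_j) \leq \dist(x,\partial \Omega).$$
Indeed, $\Omega$ is open, so $\partial \Omega$ is disjoint from $\Omega \supset \Omega_j$; hence any straight-line segment from $x \in \Omega_j$ to a point of $\partial \Omega$ must first exit $\Omega_j$ and so cross $\partial \Omega_j$. Since $t \leq 2$ forces $t-2 \leq 0$, raising to the $(t-2)$-power reverses the inequality, giving $\dist(x,\partial \Omega_j)^{t-2} \geq \dist(x,\partial \Omega)^{t-2}$ on $\Omega_j$. Integrating, summing over $j$, and using that $\Omega \setminus \cup_j \Omega_j$ has Lebesgue measure zero yields
$$\sum_j \int_{\Omega_j} \dist(x,\partial \Omega_j)^{t-2}\, dx \;\geq\; \int_{\cup_j \Omega_j} \dist(x,\partial \Omega)^{t-2}\, dx \;=\; \int_\Omega \dist(x,\partial \Omega)^{t-2}\, dx,$$
which combined with the first step gives the desired bound.

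The main obstacle is the bookkeeping of multiplicative constants: the integral comparison above introduces factors depending on the Whitney constants in (\ref{Whit}), whereas the lemma is stated without one. In the applications these factors are harmless, since they do not affect the critical exponent and can be absorbed into the estimates that follow; alternatively one can fix a specific dyadic Whitney decomposition to keep them under tight control. The hypothesis $t \in [1,2]$ is used only through $t - 2 \leq 0$: at $t = 2$ both sides equal the Lebesgue measure of $\Omega$ (up to constants) and the inequality would reverse for $t > 2$, where a finer decomposition produces a smaller sum.
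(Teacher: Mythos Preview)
The paper does not supply its own proof of this lemma; it is quoted from Section~20 of \cite{CB1}. Your integral approach is correct and is the natural way to see why the inequality holds: the comparison $\sum_Q \diam(Q)^t \asymp \int_U \dist(x,\partial U)^{t-2}\,dx$ together with the pointwise domination of boundary-distances is exactly the right mechanism, and you have correctly identified that $t\le 2$ is what makes the exponent $t-2$ nonpositive.

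The only point worth adding concerns the multiplicative constant you flag. If one works specifically with the \emph{dyadic} Whitney decomposition (maximal dyadic cubes $Q$ with $\diam(Q)\le\dist(Q,\partial U)$), the inequality holds with constant~$1$ as stated. Each dyadic Whitney cube $Q'$ of some $\Omega_j$ satisfies $\diam(Q')\le\dist(Q',\partial\Omega_j)\le\dist(Q',\partial\Omega)$, so by maximality $Q'$ is contained in the unique $Q\in W(\Omega)$ that contains it; in particular $\diam(Q')\le\diam(Q)$. The cubes $Q'\subset Q$ coming from the various $W(\Omega_j)$ then tile $Q$ up to measure zero, giving $\diam(Q)^2=\sum_{Q'\subset Q}\diam(Q')^2$, and multiplying by $\diam(Q)^{t-2}\le\diam(Q')^{t-2}$ yields $\diam(Q)^t\le\sum_{Q'\subset Q}\diam(Q')^t$. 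Summing over $Q$ gives the lemma without constants. This is the same idea as yours, organized combinatorially rather than through an integral; for the application to critical exponents either version suffices, as you note.
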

\begin{figure}[!h]
\centerline{ \includegraphics[height=3in,width=3in]{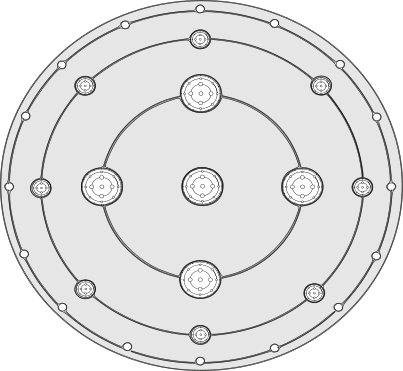} }
\caption{A schematic for the necklacing construction. Holes of $\Omega_k$ in the same circular layer are connected via and approximately circular arcs, and this construction is repeated for all $\omega \subset \Omega_k^A$. The result is that the multiply connected Fatou components are now decomposed into topological annuli, which can be straightened into round annuli by a biLipshitz map. Components of $k$-type for $k \leq 0$ are further broken up into quadrilaterals that map conformally onto $\Omega_k^S$, similar to the proof of Lemma $\ref{Far}$.  Lemmas 14.2 and 14.3 say that it suffices to estimate the critical exponent for a Whitney decomposition of the complement of the ``necklaced" Julia set of $f$.}
\end{figure}

The topological annuli we obtain are close to round annuli if the Fatou component is far enough from the Julia set. This makes calculating the $s$-Whitney sum easy, according to the next two lemmas. 
\begin{lemma}
	\label{BiLip}
	If $f:\Omega_1 \ra \Omega_2$ is biLipschitz, then for any $t \in (0,2]$, we have
	$$\sum_{Q \in W(\Omega_1)} \diam(Q)^t \cong \sum_{Q \in W(\Omega_2)} \diam(Q)^t.$$
\end{lemma}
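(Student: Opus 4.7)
The plan is to use the fact that a biLipschitz homeomorphism distorts distances by at most a fixed multiplicative constant $L$, hence sends Whitney cubes in $\Omega_1$ to subsets of $\Omega_2$ that are themselves ``Whitney-like'' with slightly worse constants. By symmetry (applying the same argument to $f^{-1}$, which is also biLipschitz with constant $L$), it suffices to prove one direction of the comparison.

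First, I would observe that if $Q \in W(\Omega_1)$ has diameter $d$, then $L^{-1} d \leq \diam(f(Q)) \leq L d$ directly from the biLipschitz inequality. Since $f$ extends to a biLipschitz homeomorphism of $\overline{\Omega_1}$ onto $\overline{\Omega_2}$ sending $\pa \Omega_1$ to $\pa \Omega_2$, one similarly has $L^{-1}\dist(Q,\pa\Omega_1) \leq \dist(f(Q),\pa\Omega_2) \leq L\dist(Q,\pa\Omega_1)$. Combining these two comparisons with the Whitney estimate $(\ref{Whit})$ for $Q$ shows that the collection $\{f(Q) : Q \in W(\Omega_1)\}$ consists of pairwise interior-disjoint subsets of $\Omega_2$ whose closures cover $\Omega_2$ and which satisfy the generalized Whitney-type estimate $(\ref{Whit})$ with constant $CL^2$ in place of $C$.

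Second, I would invoke the standard fact quoted immediately after $(\ref{Whit})$ (cf.\ \cite{Garnett}, p.~21): any two families of sets satisfying $(\ref{Whit})$ with fixed constants are mutually subordinate, in the sense that each member of one family is covered by at most $M$ members of the other, where $M$ depends only on the Whitney constants. Applied to $\{f(Q)\}$ and $W(\Omega_2)$, this yields that each $f(Q)$ is covered by at most $M$ cubes $Q' \in W(\Omega_2)$, each with $\diam(Q') \cong \diam(f(Q)) \cong \diam(Q)$, and that each $Q' \in W(\Omega_2)$ meets at most $M$ of the sets $f(Q)$.

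Summing then gives the desired estimate: for any $t \in (0,2]$,
$$\sum_{Q \in W(\Omega_1)} \diam(Q)^t \leq C_1 \sum_{Q \in W(\Omega_1)} \sum_{\substack{Q' \in W(\Omega_2) \\ Q' \cap f(Q) \neq \emptyset}} \diam(Q')^t \leq C_2 \sum_{Q' \in W(\Omega_2)} \diam(Q')^t,$$
where the first inequality uses the subordination and the diameter comparison, and the second uses that each $Q'$ appears in the double sum at most $M$ times. Applying the identical argument to $f^{-1}$ yields the reverse inequality. The sole point requiring attention is that $f$ extends biLipschitzly to the closures and pairs the boundaries — this is automatic because a biLipschitz homeomorphism preserves Cauchy sequences in both directions. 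There is no substantive mathematical obstacle here; the proof is essentially a bookkeeping exercise once the correct subordination lemma is in hand.
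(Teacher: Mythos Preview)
Your argument is correct and is the standard proof of this fact. The paper does not actually supply its own proof of this lemma; it is merely quoted from Section~20 of \cite{CB1} along with Lemmas~\ref{Necklace} and~\ref{Round Annulus}, so there is no in-paper argument to compare against. Your proof is precisely the expected one: push the Whitney decomposition of $\Omega_1$ forward by $f$, observe that the images still satisfy the Whitney estimate $(\ref{Whit})$ with a worse constant depending only on $L$ and $C$, and then invoke the bounded-overlap principle for two Whitney-type families (stated in the paper in the paragraph following $(\ref{Whit})$).

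One small simplification: you do not actually need to pass to the closures or worry about the boundary extension. The comparison $L^{-1}\dist(z,\pa\Omega_1) \leq \dist(f(z),\pa\Omega_2) \leq L\,\dist(z,\pa\Omega_1)$ follows directly from the biLipschitz inequality applied to the inclusions $B(z,r)\subset\Omega_1$ and $B(f(z),s)\subset\Omega_2$, with no reference to boundary points at all. This avoids any delicacy about whether the extension to $\overline{\Omega_1}$ is onto $\overline{\Omega_2}$.
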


Given a Whitney decomposition $W$, the $s$-Whitney sum is the sum of the diamters of the elements of $W$ raised to the $s$ power. 
\begin{lemma}
	\label{Round Annulus}
	Let $A(r,r+ \delta)$ be a round annulus and $t \geq 1$. Then the $t$-Whitney sum is 
	$$O\left( \frac{1}{(t-1)} \delta_j^{t-1} r^t \right).$$
\end{lemma}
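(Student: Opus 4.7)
The strategy is to organize the Whitney decomposition of $A(r,r+\delta)$ into dyadic generations indexed by distance to the nearest boundary circle, count the cubes in each generation, and then sum the contributions as a geometric series with ratio $2^{1-t}$. It is precisely this geometric series that produces the $1/(t-1)$ factor as $t \downarrow 1$.

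First I would split the annulus into its inner half $A^- = \{r \leq |z| \leq r+\delta/2\}$ and outer half $A^+ = \{r+\delta/2 \leq |z| \leq r+\delta\}$ and treat each analogously. Working on $A^+$, I partition it into dyadic sub-annuli
\[
S_j = \left\{r+\delta - \delta\cdot 2^{-j} \leq |z| \leq r+\delta - \delta\cdot 2^{-j-1}\right\}, \qquad j \geq 0,
\]
so that every point of $S_j$ lies at distance comparable to $\delta\cdot 2^{-j}$ from the outer circle $\{|z|=r+\delta\}$. By the Whitney estimate $(\ref{Whit})$, any Whitney cube meeting $S_j$ has diameter comparable to $\delta\cdot 2^{-j}$. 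Since $S_j$ has radial width $\delta\cdot 2^{-j-1}$ and circumference at most $2\pi(r+\delta)$, its area is $O((r+\delta)\,\delta\cdot 2^{-j})$, so the number of Whitney cubes meeting $S_j$ is at most a constant multiple of $(r+\delta)/(\delta\cdot 2^{-j})$. The analogous count on $A^-$ uses the inner circle of circumference $2\pi r$ and gives the even smaller bound $r/(\delta\cdot 2^{-j})$.

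Summing contributions generation by generation yields
\begin{eqnarray*}
\sum_{Q \in W(A^+)} \diam(Q)^t
&\leq& C\sum_{j=0}^{\infty} \frac{r+\delta}{\delta\cdot 2^{-j}}\cdot (\delta\cdot 2^{-j})^t \\
&=& C(r+\delta)\,\delta^{t-1}\sum_{j=0}^{\infty} 2^{j(1-t)} \\
&=& \frac{C(r+\delta)\,\delta^{t-1}}{1-2^{1-t}},
\end{eqnarray*}
and similarly for $A^-$ with $r+\delta$ replaced by $r$. Since $1-2^{1-t} \geq c(t-1)$ for $t > 1$, the total is $O((r+\delta)\,\delta^{t-1}/(t-1))$. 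In the intended application the annulus has inner radius $r$ comparable to some large $R_k$ and thickness $\delta$ satisfying $\delta \leq r$, so $r+\delta \leq 2r \leq 2r^t$ (using $r\geq 1$), and the bound simplifies to the claimed $O(r^t \delta^{t-1}/(t-1))$.

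The main obstacle is obtaining the correct per-generation cube count; once that is in place the geometric-series sum is routine and transparently exposes the $1/(t-1)$ factor as $t\downarrow 1$. One minor subtlety, not relevant to the intended applications, is the ``fat'' regime $\delta > r$: there the annulus is comparable to a disk of radius $O(r+\delta) = O(\delta)$, and one obtains a bound of the form $O(\delta^t/(t-1))$ directly from the disk Whitney estimate, which is still absorbed into $O(r^t\delta^{t-1}/(t-1))$ as long as $\delta \leq r^t$.
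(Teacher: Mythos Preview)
The paper does not actually prove this lemma; it is stated as one of several facts ``proved in Section 20 of \cite{CB1}'' (Bishop's paper), so there is no in-paper argument to compare against directly. Your proof is correct and is exactly the standard argument one would expect to find in \cite{CB1}: layer the annulus by dyadic distance to the boundary, observe that the $j$th layer is covered by $O(r/(\delta 2^{-j}))$ Whitney cubes of size $\simeq \delta 2^{-j}$, and sum the resulting geometric series with ratio $2^{1-t}$ to produce the $1/(t-1)$ factor.

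One remark: your computation actually yields the sharper bound $O\bigl(r\,\delta^{t-1}/(t-1)\bigr)$ rather than $O\bigl(r^t\,\delta^{t-1}/(t-1)\bigr)$, and you then pass to the stated form using $r\geq 1$. This is fine for the applications in the paper (where $r\simeq R_k \gg 1$), but it is worth flagging that the exponent $t$ on $r$ in the statement appears to be a harmless overstatement (or typo); the natural estimate is linear in $r$, as your argument shows.
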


Recall in Section $9$ that we labeled Fatou components which wind around $0$ as $\Omega_k$, corresponding to which $B_{k-1}$ they contained. As a corollary to the above work, we have the following basic estimate.
\begin{theorem}
	\label{basic est}
	Let $W(\omega_k)$ be a Whitney decomposition for a component $\omega_k$ of $\Omega_k$-type, $k \geq 1$, and let $t \geq s$. Then 
	$$\sum_{Q \in W(\omega_k)} \diam(Q)^{t} = O\left (\frac{1}{(t-1)} \diam(\omega_k)^t \right).$$
\end{theorem}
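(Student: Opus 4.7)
The plan is to reduce from a general $k$-type component $\omega_k$ to the reference component $\Omega_k$ using the conformal equivalence guaranteed by the definition of $k$-type, and then to estimate the Whitney sum on $\Omega_k$ itself by decomposing it into nearly-round annular pieces via the necklacing construction.

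For the reduction, $\omega_k$ being of $k$-type provides a conformal iterate $f^m : \omega_k \to \Omega_k$. Applying Lemma \ref{distortion} together with the Koebe distortion theorem (Lemma \ref{Kobe}) at each step of the orbit shows that $f^m$ has uniformly bounded conformal distortion on $\omega_k$, so after rescaling by $\diam(\omega_k)/\diam(\Omega_k)$ it is biLipschitz with constants independent of $\omega_k$. Lemma \ref{BiLip} then gives
\begin{equation*}
\sum_{Q \in W(\omega_k)} \diam(Q)^t \;\asymp\; \left(\frac{\diam(\omega_k)}{\diam(\Omega_k)}\right)^t \sum_{Q \in W(\Omega_k)} \diam(Q)^t,
\end{equation*}
so it suffices to show $\sum_{Q \in W(\Omega_k)} \diam(Q)^t = O(\diam(\Omega_k)^t/(t-1))$.

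To handle $\Omega_k$, I would apply the necklacing construction of Lemma \ref{Necklace}. By the Shape of Round Fatou Components lemma, for each $j \geq k$ the boundary components of $\Omega_k$ at level $j$ are tiny circles of diameter $O(R_j^{-1})$ placed with spacing $\asymp R_k n_j^{-1} 2^{-d_j}$ on an approximately round circle at distance $R_k \cdot 2^{-d_j}$ from the outer boundary. Joining the tiny circles at each level by short circular arcs decomposes $\Omega_k$ into topological annuli $A_j$ sandwiched between consecutive levels; since the added bumps are dwarfed by both the inter-circle spacing and the annulus widths, each $A_j$ is biLipschitz (by Koebe and Lemma \ref{BiLip}) to a round annulus of outer radius $\asymp R_k$ and width $\delta_j \asymp R_k \cdot 2^{-d_j}$, with the bulk annulus (between the inner boundary of $\Omega_k$ and level $k+1$, i.e.\ $j = k$ with $d_k = 0$) having width $\asymp R_k$. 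Lemma \ref{Round Annulus} then bounds the $t$-Whitney sum on $A_j$ by $O\bigl( R_k^t \cdot 2^{-d_j(t-1)}/(t-1) \bigr)$.

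Finally, summing over $j \geq k$ completes the estimate: because $d_{j+1} - d_j = 2n_j$ grows doubly exponentially, the series $\sum_{j \geq k} 2^{-d_j(t-1)}$ is completely dominated by its first term, giving a total of $O(R_k^t/(t-1)) = O(\diam(\Omega_k)^t/(t-1))$. The hypothesis $t \geq s$ enters only through $t > 1$, needed so that Lemma \ref{Round Annulus} gives a finite $t$-Whitney sum and so that $1/(t-1)$ is finite. The main obstacle I anticipate is the first step: showing that the distortion constants of $f^m$ remain uniform over all $k$-type components $\omega_k$, regardless of how deeply $\omega_k$ is nested inside the $\Omega_j^a$ holes of ancestor components. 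This will rely on verifying that each inverse branch of $f$ along the orbit acts on a neighborhood with uniformly bounded modulus, a property provided by Lemma \ref{distortion} and the petal geometry developed in Section 9.
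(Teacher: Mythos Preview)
Your proposal is correct and follows essentially the same route as the paper: reduce to $\Omega_k$ via the bounded-distortion conformal equivalence of Lemma~\ref{distortion}, necklace $\Omega_k$ into approximately round annuli using Lemma~\ref{Necklace}, and bound each piece by Lemmas~\ref{BiLip} and~\ref{Round Annulus}. Your version is in fact more explicit than the paper's---you spell out the widths $\delta_j \asymp R_k 2^{-d_j}$ and verify that the sum over levels is dominated by its first term, whereas the paper simply writes $\sum_{\delta_j}$ and absorbs it without comment; the uniformity concern you flag for the iterated distortion is legitimate but is handled by the remark after Lemma~\ref{distortion} that each step's distortion can be taken arbitrarily close to~$1$.
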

\begin{proof}
	By Lemma $\ref{distortion}$, it sufficies to just consider the Fatou components $\Omega_k$ which wind around the origin. The layers of holes in the Fatou set $\Omega_k$ lie on Jordan curves that can be chosen arbitrarily close to circles. Connect each of these layers with such a curve, decomposing $\Omega_k$ into approximately round annuli. By Lemma \ref{Necklace}, it suffices to estimate the $s$-sum of the Whitney decomposition of each of these annuli. But since the Jordan curves may be chosen close to circles, the annuli are biLipschitz equivalent to round annuli $A(r,r+\delta)$, where $r$ is the diameter of $\Omega_k$, with biLipshitz constant independent of the topological annulus. Therefore, by Lemmas $\ref{BiLip}$ and $\ref{Round Annulus}$, we have.
	$$\sum_{Q \in W(\Omega_k)} = O\left(\frac{1}{(t-1)}\sum_{\delta_j} \diam(\Omega_k)^t \right) =O\left(\frac{1}{(t-1)} \diam(\Omega_k)^s \right).$$ 
\end{proof}
Note that the above ``necklacing" construction where we decompose the Fatou components into approximately parallel annuli works for any $\Omega_k$ with $k \leq 1$ by pulling back the construction under $f$. Let $\omega \subset \Omega^A_1$ (recall that $\Omega^A_1$ is the topological annulus formed by filling in the holes of $\Omega_1$) be any Fatou component. If $\omega$ is of $k$ type for some $k$, the construction can also be pulled back to $\omega$ via $f$. Hence we have decomposed all Fatou components into topological annuli. Those components that are of $k$-type for $k \geq 1$ are roughly circular annuli with $C^1$ boundary components. However, when $k << 0$, the components of $k$-type will be very close to a copy of the fractal Julia set of the quadratic like map and will approximate the Julia set. The length of these boundary components tends to infinity. 

Let $W(\Omega_1)$ be a Whitney decomposition of the necklaced Fatou component $\Omega_1$ together with the necklaced Fatou components contained inside $\Omega^A_1$. Since the critical exponent of a set is independent of the Whitney decomposition, we assume that $W(\Omega_1)$ is taken to be the dyadic Whitney decomposition.

\begin{theorem}
	The $(s+\epsilon)$-sum of the Whitney decomposition $W(\Omega_1)$ above converges for any $\epsilon > \epsilon_0$, where $s$ the Hausdorff dimension of $\pa K_{f}$. 
\end{theorem}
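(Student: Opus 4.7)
The plan is to combine three ingredients established earlier: the necklacing reduction of Lemma 14.2, the basic Whitney estimate of Theorem 14.5 for round $k$-type components, and the $S$-sum estimate of Theorem 13.1. The overall idea is to bound the total Whitney sum by a sum of Whitney sums over each individual necklaced Fatou component, estimate the Whitney sum of a round component by $O(\diam(\omega)^{s+\epsilon}/\epsilon)$, and then sum these diameters using Theorem 13.1.

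First, since the Julia set has zero Lebesgue measure by Corollary 13.3, the union of necklaced $\Omega_1$ with all necklaced Fatou components in $\Omega_1^A$ differs from $\Omega_1^A$ by a null set. Lemma 14.2 therefore yields
$$\sum_{Q \in W(\Omega_1)} \diam(Q)^{s+\epsilon} \leq \sum_{\omega} \sum_{Q \in W(\omega)} \diam(Q)^{s+\epsilon},$$
where the outer sum ranges over $\Omega_1$ and all necklaced Fatou components $\omega \subset \Omega_1^A$. When $\omega = \Omega_1$ or $\omega$ is any round component of $k$-type with $k \geq 1$, Theorem 14.5 controls the inner Whitney sum by $O(\diam(\omega)^{s+\epsilon}/\epsilon)$. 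Summing over all such $\omega$ and invoking Theorem 13.1 for any $\epsilon > \epsilon_0$ then gives a finite contribution from the round components.

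The remaining components inside $\Omega_1^A$ are the wiggly ones (of $k$-type with $k \leq 0$) and the trapdoors (preimages of $B_f$). For these the plan is to adapt the decomposition used in the proof of Lemma 13.4: split each such $\omega$ into finitely many quadrilateral pieces, each of which maps conformally with bounded distortion (via Koebe together with Lemma 9.9) onto either $\Omega_1$ slit by a radial segment or onto $B_f$. Theorem 14.5 applied after pullback then gives an $O(\diam(\text{piece})^{s+\epsilon}/\epsilon)$ bound per piece. Since the pieces form a Whitney-type decomposition of a conformal copy of the complement of $J_f$, Sullivan's Theorem 14.1 together with Lemma 3.1 gives critical exponent exactly $s$, so summing over pieces produces $O(\diam(\omega)^{s+\epsilon}/\epsilon)$. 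Aggregate control of these remaining diameters follows from Theorem 13.1, whose proof already absorbs the negative-$k$ contributions into the $k \geq 1$ case via Lemma 13.4.

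The main obstacle is the third step: the boundaries of wiggly components approximate fractal copies of $J_f$ rather than round circles, so Theorem 14.5 cannot be applied directly and no elementary biLipschitz straightening to round annuli is available. The saving observation is that, after the quasiconformal conjugacy constructed in Section 6, $\pa K_f$ is a quasicircle with Hausdorff dimension, packing dimension, and critical exponent all equal to $s$, and this critical exponent is preserved under the conformal pullbacks with uniformly bounded distortion supplied by Lemma 9.9. Necklacing then aligns the Whitney contributions of these pieces with the summable terms of Theorem 13.1, completing the estimate.
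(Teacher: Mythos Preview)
Your approach is essentially the paper's: split into the three classes (round $k$-type for $k\ge 1$, wiggly $k$-type for $k\le 0$, and preimages of $B_f$), handle the round case by Theorem~14.5 plus Theorem~10.1, and handle the other two by conformal pullback to either $\Omega_1^S$ or $B_f$ combined with the critical-exponent identity for $J_f$. The ingredients you name are exactly the ones the paper uses.

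There is, however, a genuine organizational gap in your treatment of the wiggly components. You write ``summing over pieces produces $O(\diam(\omega)^{s+\epsilon}/\epsilon)$'' for each wiggly $\omega$, and then propose to sum $\diam(\omega)^{s+\epsilon}$ over all wiggly $\omega$ via Theorem~10.1. But that last sum diverges: inside a fixed $1$-type hull $\hat\omega_1$ the wiggly components are conformal copies of $\Omega_j$ for $j\le 0$, and as $j\to-\infty$ these accumulate on a copy of $J_f$, so their diameters are bounded below by a fixed multiple of $\diam(\omega_1)$. The terms do not tend to zero, so no appeal to Theorem~10.1 (or its proof) rescues this. Your own middle sentence, that ``the pieces form a Whitney-type decomposition of a conformal copy of the complement of $J_f$'', is only true of the pieces taken \emph{across all} wiggly components inside a single $1$-type hull, not of the finitely many pieces tiling one wiggly $\omega$.

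The fix, which is exactly what the paper does, is to skip the intermediate per-wiggly-$\omega$ bound entirely. Group the quadrilateral pieces $Q$ by the $1$-type hull $\hat\omega_1$ they sit in; then two applications of Koebe give $\sum_{S\subset Q}\diam(S)^{s+\epsilon}\le C\,\diam(\omega_1)^{s+\epsilon}\diam(Q)^{s+\epsilon}$ times a fixed Whitney sum over $\Omega_1^S$. Now sum $\diam(Q)^{s+\epsilon}$ over all $Q$ in that hull: since these $Q$ form a Whitney decomposition of the complement of a conformal copy of $J_f$, Sullivan plus Lemma~3.1 give convergence. The remaining outer sum is over $1$-type components $\omega_1$, and \emph{that} is controlled by Theorem~10.1. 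The trapdoor case is analogous, with $B_f$ in place of $\Omega_1^S$ and each trapdoor associated to its containing $1$-type component.
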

\begin{proof}
	The Fatou components have three types. Those of $\Omega_k$ type for $k \geq 1$, $\Omega_k$ type for $k < 1$, and those that get iterated to $B_{f}$. Therefore,
	$$\sum_{Q \subset W(\Omega_1)} \diam(Q)^{s+\epsilon} = I + II + III.$$
	Here, $I$ represents the cubes in the components of $\Omega_k$-type for $k \geq 1$, $II$ represents cubes in components of $\Omega_k$-type for $k <1$, and $III$ represents cubes that get iterated into $B_{f}$. We estimate each infinite sum separately.
	
	We have already taken care of $I$ above using Theorem $\ref{basic est}$. Indeed, by Theorem $10.1$, the $(s+\epsilon)$-sum of the components of $\Omega_k$-type converges for any $\epsilon > \epsilon_0$.  
	
	Next we estimate $III$. $\pa K_{f}$ is the Julia set of a hyperbolic polynomial-like mapping. It follows from Theorem $\ref{Sul}$ that the upper Minkowski and Hausdorff dimensions of $\pa K_f$ are both $s$. All other components which map to $\pa K_{f}$ do so conformally, and are contained inside of a component $\omega$ of $\Omega_1$ type. This means that the dimensions of the boundary of the copies of all the $K_f$'s are all $s$. It also implies that the copies of $K_f$ map to $K_f$ with bounded distortion. Since the conformal image of a Whitney decomposition is a Whitney decomposition in the range of the conformal mapping, and since the image cubes have bounded finite overlap with a fixed dydadic Whitney decomposition $W(B_f)$ of the bounded complement of $K_f$, we have for a given copy $B_f'$ of the basin of attraction that 
	$$\sum_{Q \in W(\Omega_1) \cap B_f'} \diam(Q)^{s+\epsilon} \leq C \diam(\omega)^{s+\epsilon} \sum_{Q \in W(B_f)} \diam(Q)^{s + \epsilon}.$$
	Here $\omega$ is the Fatou component of $\Omega_1$-type containing $B'_f$. Summing over all such components, we have
	$$III \leq C \sum_{\omega \subset \Omega^A_1} \diam(\omega)^{s +\epsilon_0} \cdot\sum_{Q \in W(B_f)} \diam(Q)^{s+\epsilon_0}.$$
	Therefore $III$ converges because of Theorem $\ref{S-Sum}$ and Theorem $\ref{VE}$.
	
	Lastly we show $II$ is finite. We apply a technique similar to the one used in the proof of Theorem $\ref{S-Sum}$. Every component of $\Omega_k$-type for $k <1$ is contained in the polynomial hull of a unique Fatou component of $\Omega_1$-type.  We fix such a component and call it $\omega$. We further refine $W(\Omega_1)$ (via Lemma \ref{Necklace}) by taking our necklaced components and slicing them into pieces which map conformally onto $\Omega_1^S$. These pieces are chosen to be the same collection $\hat \cQ = \{\hat Q^k_i\}$ from Theorem $\ref{S-Sum}$, except sliced by the necklacing construction. We pull back this refinement to all other Fatou components being summed over via the conformal mapping $f$. 
	
	Now choose some cube $S$ in the refinement of $W(\Omega_1)$ contained in $\omega \subset \Omega^A_1$, a component of $1$-type. Then we have 
	$$\diam(S)^{s+\epsilon} \leq C \diam(\omega)^{s+\epsilon} \diam(f^M(S))^{s+\epsilon}.$$
	$f^M(S) \subset Q$ for some $Q \in \hat \cQ$, therefore  
	$$\diam(f^M(S))^{s+\epsilon} \leq C \diam(Q)^{s+\epsilon} \cdot \diam(f^{M+k}(S))^{s+\epsilon}.$$
	Combining these estimates we have 
	$$\diam(S)^{s+\epsilon} \leq C \diam(\omega)^{s+\epsilon}\diam(Q)^{s+\epsilon}  \diam(f^{M+k}(S))^{s+\epsilon}.$$

	Next, we sum over all $S \subset f^{-M}(Q)$. This yields
	$$\sum_{S \subset Q} \diam(S)^{s+\epsilon} \leq C \diam(\omega)^{s+\epsilon}\diam( Q)^{s+\epsilon} \sum_{S \subset Q}\diam(f^{M+k}(S))^{s+\epsilon}.$$
	Similarly to the previous case, by the Koebe distortion theorem, the sum on the right hand side is comparable to a fixed $(s+\epsilon)$-sum of some Whitney decomposition of a necklaced $\Omega_1$. The sum therefore converges. Next, we sum over 
	all $Q$ in the collection $f^{-M}(\hat \cQ)$ contained in $\hat \omega$. Therefore,
	$$\sum_{S \subset \hat \omega} \diam(S)^{s+\epsilon} \leq C  \diam(\omega)^{s+\epsilon}\sum_{Q \in \hat \cQ} \diam(Q)^{s+\epsilon}.$$
	The sum on the right hand side converges by Theorem $\ref{VE}$.  Finally, we sum of all possible components $\omega$ of $1$-type to see that
	$$II  \leq C \sum_{\omega \subset \Omega^A_1} \diam(\omega)^{s+\epsilon}.$$
	This sum converges by Theorem $\ref{S-Sum}$. It follows that $II$, converges, proving the theorem.
\end{proof}

\begin{corollary}
	The upper Minkowski dimension, and hence the packing dimension of $\cJ(f) \cap A_1$ is at most $s+\epsilon_0$.
\end{corollary}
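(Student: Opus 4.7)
The plan is straightforward given the preceding theorem: convert its convergent $(s+\epsilon)$-Whitney sum into an upper Minkowski bound via Lemma \ref{VE}, and then invoke the Rippon--Stallard constancy result to transfer to packing dimension.

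First I would observe that $\cJ(f) \cap A_1$ is compact, and by the zero Lebesgue measure corollary from Section 13, has zero planar measure. Lemma \ref{VE} therefore gives the equality $\alpha(\cJ(f) \cap A_1) = \UMdim(\cJ(f) \cap A_1)$, so it suffices to bound the critical exponent $\alpha$ by $s + \epsilon_0$.

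To bound $\alpha$, fix $\epsilon > \epsilon_0$ and take a dyadic Whitney decomposition of $\bC \setminus (\cJ(f) \cap A_1)$, restricted to cubes within bounded distance of this set. Cubes lying outside a fixed neighborhood of $A_1$ are uniformly bounded away from $\cJ(f) \cap A_1$ and contribute a finite amount for any exponent. The remaining cubes lie in the Fatou components meeting $A_1$; the bulk of them lie in $\Omega_1^A$, with a thin slab belonging to $\Omega_2^A$ near its innermost boundary, which can be pulled back conformally into $\Omega_1^A$ under $f$ with bounded distortion by Lemma \ref{distortion}, changing the relevant sum only by a bounded factor. Applying Lemma \ref{Necklace} (the necklacing curves have zero Lebesgue measure so the hypothesis holds), the $(s+\epsilon)$-Whitney sum over this union of unnecklaced Fatou components is dominated by the $(s+\epsilon)$-sum over $W(\Omega_1)$ from the preceding theorem, which is finite. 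Hence $\alpha(\cJ(f) \cap A_1) \leq s + \epsilon$, and letting $\epsilon \downarrow \epsilon_0$ yields $\UMdim(\cJ(f) \cap A_1) \leq s + \epsilon_0$.

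For the packing dimension I would then invoke the Rippon--Stallard result recalled in Section 3: for a transcendental entire $f$ with no exceptional value, the local upper Minkowski dimension on any bounded set of positive distance from an exceptional point coincides with $\Pdim(\cJ(f))$. Our $f$ has infinite backward orbit (it even has an infinite set of zeros), so no exceptional value exists, and $\Pdim(\cJ(f)) = \UMdim(\cJ(f) \cap A_1) \leq s + \epsilon_0$. The only genuine subtlety is the bookkeeping step matching the Whitney decomposition $W(\Omega_1)$ produced in the preceding theorem with a Whitney decomposition of the actual complement of $\cJ(f) \cap A_1$ near the Julia set; Lemma \ref{Necklace} combined with the dynamical conformal identification of the $\Omega_2^A$-slab with $\Omega_1^A$ resolves this in a few lines, and no further analytic work is needed.
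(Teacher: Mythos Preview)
Your argument is correct and mirrors the paper's own proof: the preceding theorem bounds the critical exponent, zero Lebesgue measure together with Lemma~\ref{VE} gives the upper Minkowski bound, and the Rippon--Stallard constancy result (with no exceptional value) yields the packing dimension statement. One minor point on your added bookkeeping: Lemma~\ref{distortion} concerns petal preimages and is not the right citation for handling the $\Omega_2^A$ slab near $\Gamma_1$; since that slab contains no Julia set besides the $C^1$ near-circular curve $\Gamma_1$ itself, it is biLipschitz to a round annulus and Lemma~\ref{Round Annulus} disposes of it directly without any dynamical pullback.
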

\begin{proof}
	The above argument shows that the critical exponent for the Whitney decomposition of $\cJ(f) \cap A_1$ is less than or equal to $s+\epsilon_0$. Since $\cJ(f)$ has zero Lebesgue measure, Theorem \ref{VE} says that the upper Minkowski dimension of $\cJ(f) \cap Omega^A_1$ is also less than or equal to $s+\epsilon_0$. By \cite{RS}, since $f$ has no exceptional values, the local upper Minkowski dimension is constant and coincides with the packing dimension, so that $\Pdim(J(f)) \leq s+\epsilon_0$. 
\end{proof}

\bibliographystyle{alpha}
\bibliography{Pdim}

\end{document}